\newtheorem{thm}{Theorem}[section]
\newtheorem{thm*}{Theorem A}
\newtheorem*{theorem*}{Theorem}
\newtheorem{cor}[thm]{Corollary}
\newtheorem{lem}[thm]{Lemma}
\newtheorem{rem}{Remark}
\theoremstyle{definition}
\newtheorem{defn}[thm]{Definition}
\theoremstyle{remark}
\numberwithin{equation}{section}
\newcommand{\thmref}[1]{Theorem~\ref{#1}}
\newcommand{\lemref}[1]{Lemma~\ref{#1}}
\newcommand{\corref}[1]{Corollory~\ref{#1}}
\newcommand{\R}{\mathbb{R}}
\newcommand{\Lie}{\mathcal{L}}
\newcommand{\ric}{{\rm Ric}}
\newcommand{\di}{{\rm div}}
\newcommand{\la}{\bm{\langle}}
\newcommand{\ra}{\bm{\rangle}}
\newcommand{\vol}{{\rm{vol}}}
\begin{document}
	
\title[Gradient Estimate for $\Delta_pv+bv^q+cv^r =0$]{Local and Global Log-Gradient estimates of solutions to $\Delta_pv+bv^q+cv^r =0$ on manifolds and applications}

\author{Jie He}
\address{School of Mathematics and Physics, Beijing University of Chemical Technology,  Chaoyang District, Beijing 100029, China}
\email{hejie@amss.ac.cn}

\author{Yuanqing Ma*}
\thanks{*Corresponding author}
\address{Institute of Geometry and Physics, University of Science and Technology of China, No. 96 	Jinzhai Road, Hefei, Anhui Province, 230026, China. }
\email{mayuanqinghaha@ustc.edu.cn }

\author{Youde Wang}
\address{1. School of Mathematics and Information Sciences, Guangzhou University; 2. Hua Loo-Keng Key Laboratory
		of Mathematics, Institute of Mathematics, Academy of Mathematics and Systems Science, Chinese Academy
		of Sciences, Beijing 100190, China; 3. School of Mathematical Sciences, University of Chinese Academy of Sciences,
		Beijing 100049, China.}
\email{wyd@math.ac.cn}
	
\begin{abstract}
In this paper, we employ the Nash-Moser iteration technique to study local and global properties of positive solutions to the equation $$\Delta_pv+bv^q+cv^r =0$$ on complete Riemannian manifolds with Ricci curvature bounded from below, where $b, c\in\mathbb R$, $p>1$, and $q\leq r$ are some real constants. Assuming certain conditions on $b,\, c,\, p,\, q$ and $r$, we derive succinct Cheng-Yau type gradient estimates for positive solutions, which is of sharp form. These gradient estimates allow us to obtain some Liouville-type theorems and Harnack inequalities. Our Liouville-type results are novel even in Euclidean spaces. Based on the local gradient estimates and a trick of Sung and Wang(\cite{MR3275651}), we also obtain the global gradient estimates for such solutions. As applications we show the uniqueness of positive solutions to some generalized Allen-Cahn equation and Fisher-KPP equation.
\end{abstract}
	
\maketitle
\leftline{\quad Key words: Gradient estimate; Nash-Moser iteration; Liouville type theorem}
\leftline{\quad MSC 2020: 58J05; 35B45; 35J92}
	
\section{Introduction}
One trend in Riemannian geometry since the 1950's has been the study of how curvature affects global properties of partial differential equations and global quantities like the eigenvalues of the Laplacian. It is well-known that gradient estimate is a fundamental and powerful technique in the analysis of partial differential equations on Riemannian manifolds.
	
One of most classical results about gradient estimates can be traced back to Cheng-Yau's gradient estimate for positive harmonic functions (see \cite{MR1333601, MR431040}). Let $(M,g)$ be an $n$-dimensional complete non-compact Riemannian manifold with $\mathrm{Ric}_g\geq-(n-1)\kappa g$,  Cheng and Yau deduced that for any positive harmonic function $v$ in a geodesic ball $B_R(o)\subset M$, there holds
	\begin{align}\label{equa0}
		\sup_{B_{R/2}(o)}\frac{|\nabla v|}{v}\leq C_n\frac{1+\sqrt\kappa R}{R}.
	\end{align}
	An important feature of Cheng-Yau's estimate is that the right-hand side of \eqref{equa0} depends only on $n,\kappa$ and $R$, it does not depend on the injective radius or other global properties. Cheng-Yau's estimate turned out to be very useful. Harnack inequality follows immediately from Cheng-Yau's estimate. Liouville theorem for global positive harmonic functions on non-compact manifolds with non-negative Ricci curvature is also a direct consequence of  \eqref{equa0}.
	Cheng-Yau's approach can also be used to derive the estimates of the spectrum of manifolds  and investigate the geometry of manifolds(see \cite{MR1333601,MR2962229}). 
	
After Cheng-Yau's work, gradient estimates for harmonic functions are generalized to more general space(\cite{MR3268873, MR2981845}) and gradient estimates for many other equations defined on Riemannian manifolds are established (see for example \cite{MR2880214,han2023gradient, hewangwei2024, MR0834612, Wang, MR4559367}). This topic has attracted the attention of many mathematicians.
	
In this article, we are concerned with the equation
\begin{equation}\label{equa1}
\begin{cases}
\Delta_pv+bv^q+cv^r  =0, &\quad \text{in}\, M;\\
v>0, &\quad \text{in}\, M;\\
b, c\in\mathbb R, ~p>1, ~q\leq r , &
\end{cases}
\end{equation}
where $(M,g)$ is a complete Riemannian manifold with Ricci curvature bounded from below. Here we require $q\leq r$ to eliminate possible duplicates since the two terms $bv^q$ and $cv^r$ are relatively symmetric. Equation \eqref{equa1} arises from many classical equations
and there are many questions related to equation (\ref{equa1}). For instances, in the case $b=0$ or $c=0$,  Eq \eqref{equa1} reduces to Lane-Emden equation; in the case $b=1, c=-1$, $p=2$, $q=1$, and $r=3$, Eq \eqref{equa1} is just Allen-Cahn equation; and in the case $b=1, c=-1$, $p=2, q=1, r=2$, Eq \eqref{equa1} is static Fisher-KPP equation, which has been largely studied in the last century. For more details we refer to \cite{MR4240763,fisher1937wave}.
	
The primary objectives of the present paper are twofold: Firstly, we want to establish exact Cheng-Yau type gradient estimates for positive solutions to equation \eqref{equa1} on a Riemannian manifold with Ricci curvature bounded below. Secondly, as application we would like to show some global properties of solutions to \eqref{equa1}, for instance, Liouville-type theorems. To achieve these goals, we employ the Nash-Moser iteration method and develop adaptive pointwise estimates for the linearized operator of the $p$-Laplacian operator.
	
When $b=1, c=0$, the equation (\ref{equa1}) reduces to the classical Lane-Emden-Fowler equation
	\begin{align}\label{equa:1.2}
		\begin{cases}
			\Delta_pv + v^q=0 & \text{in}\quad \R^n;\\
			v>0\quad & \text{in}\quad \R^n,
		\end{cases}
	\end{align}
where $q>0$ \text{and}  $1<p<n$. The equation (\ref{equa:1.2}) has already been the subject of countless publications (see \cite{MR1004713, MR1134481, MR982351, MR1121147, MR615628, MR829846, MR2350853, MR1946918, MR2522424}).

One of the questions solved (see \cite{MR1946918}) is that the Liouville property holds (i.e. equation (\ref{equa:1.2}) admits no positive solution) if and only if
	$$
	q\in \left(0,\,\, p^*-1 \right),
	$$
where $p^*=np/(n-p)$ is the Sobolev duality of $p$. In particular, in \cite{Li-Zhang} Y.Li and L.Zhang have ever established the following elegant Liouville theorem:

\begin{theorem*}(Li and Zhang \cite{Li-Zhang}) Let $g$ satisfy that $g$ is locally bounded and $s^{-\frac{n+2}{n-2}}g(s)$ is non-increasing. Assume that $u$ is a positive classical solution of equation $\Delta u + g(u)=0$ in $\mathbb{R}^n$ with $n \geq 3$, then either for some $b > 0$,
$$bu(x)= \left(\frac{\mu}{1 + \mu^2|x - \bar{x}|^2}\right)^{\frac{n-2}{2}}$$
or $u \equiv a$ for some $a > 0$ such that $g(a)=0$.
\end{theorem*}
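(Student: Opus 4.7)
The natural strategy is the \emph{method of moving spheres}, introduced by Li–Zhu and developed systematically by Li–Zhang precisely to handle classification problems whose nonlinearity carries a conformal monotonicity of this exact type. The conformal critical exponent $(n+2)/(n-2)$ hidden in the hypothesis on $g$ is exactly what makes Kelvin inversion compatible with the equation.

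\textbf{Setup and key subsolution inequality.} For each $x\in\mathbb{R}^n$ and $\lambda>0$ I would introduce the Kelvin transform
$$u_{x,\lambda}(y)=\left(\frac{\lambda}{|y-x|}\right)^{\!n-2}u\!\left(x+\frac{\lambda^{2}(y-x)}{|y-x|^{2}}\right),\qquad y\neq x.$$
A direct conformal computation gives
$$\Delta u_{x,\lambda}(y)=-\left(\frac{\lambda}{|y-x|}\right)^{\!n+2}\!g\!\left(\left(\frac{|y-x|}{\lambda}\right)^{\!n-2}u_{x,\lambda}(y)\right).$$
In the exterior $\{|y-x|>\lambda\}$ the factor $(|y-x|/\lambda)^{n-2}$ exceeds $1$, so the assumption that $s^{-(n+2)/(n-2)}g(s)$ is non-increasing immediately yields the subsolution inequality $\Delta u_{x,\lambda}+g(u_{x,\lambda})\ge 0$ there. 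Subtracting from the equation for $u$, the difference $w=u-u_{x,\lambda}$ satisfies a linear inequality $\Delta w+c(y)w\le 0$ with $c$ locally bounded, which is the natural object for the strong maximum principle and Hopf boundary lemma.

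\textbf{Moving spheres and dichotomy.} A short start-up argument, using that $u$ is positive and locally bounded together with the decay of $u_{x,\lambda}$ as $\lambda\to 0^{+}$, produces an initial radius $\lambda_{0}(x)>0$ such that $u_{x,\lambda}\le u$ in $\{|y-x|>\lambda\}$ whenever $0<\lambda<\lambda_{0}(x)$. Setting
$$\bar\lambda(x):=\sup\{\mu>0:u_{x,\lambda}\le u\ \text{in}\ \{|y-x|>\lambda\}\ \text{for every}\ 0<\lambda<\mu\},$$
the analysis splits on whether $\bar\lambda(x_{0})=\infty$ for some $x_{0}$ or $\bar\lambda(x)<\infty$ for every $x$. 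In the former case, letting $\lambda\to\infty$ at a fixed $y$ in $u_{x_{0},\lambda}(y)\le u(y)$ and using the continuity of $u$ at $x_0$ (since the image point tends to $x_0$) forces $u$ to be constant, after which the equation yields $g(a)=0$. In the latter case, the strong maximum principle applied to $\Delta w+c(y)w\le 0$, combined with the definition of $\bar\lambda(x)$, forces the exact identity $u\equiv u_{x,\bar\lambda(x)}$ on $\{|y-x|>\bar\lambda(x)\}$ at every $x$; one then invokes the Li–Zhang calculus lemma, which asserts that any positive continuous function on $\mathbb{R}^n$ invariant at each point under some Kelvin inversion must be of the bubble form $\bigl(a/(1+\mu^{2}|y-\bar x|^{2})\bigr)^{(n-2)/2}$, to obtain the first alternative with the advertised constant $b$.

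\textbf{Main obstacles.} The delicate technical points I expect to confront are: (i) the start-up step, which must be carried out using only local boundedness of $g$ and positivity of $u$, controlling $u_{x,\lambda}$ on compact annuli uniformly as $\lambda\to 0^+$; (ii) in the infinite-$\bar\lambda$ case, ruling out any non-constant $u$ without an a priori decay rate at infinity, which requires a careful analysis of $u_{x_{0},\lambda}(y)$ as $\lambda\to\infty$; and most sensitively (iii) upgrading the non-strict monotonicity of $s^{-(n+2)/(n-2)}g(s)$ to the identity $u\equiv u_{x,\bar\lambda(x)}$ in the finite case, where the strong maximum principle for $\Delta+c(y)$ is indispensable—plateaus in $g$ mean the inequality $\Delta w+cw\le 0$ can degenerate, and one must still conclude either strict inequality everywhere or identity. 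Once (i)–(iii) are settled, the calculus lemma is an elementary statement about functions invariant under Möbius inversion and closes the argument.
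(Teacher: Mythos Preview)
The paper does not prove this theorem at all: it is quoted verbatim from Li--Zhang \cite{Li-Zhang} as an external result, used only to remark that the Allen--Cahn and Fisher--KPP equations on $\mathbb{R}^n$ admit only the constant solution in suitable dimensions. There is therefore no ``paper's own proof'' to compare against.

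That said, your outline is essentially the argument Li and Zhang actually give in \cite{Li-Zhang}: Kelvin transform, the subsolution inequality from the monotonicity of $s^{-(n+2)/(n-2)}g(s)$, the moving-spheres dichotomy on $\bar\lambda(x)$, and the calculus lemma classifying functions invariant under inversion at every center. Your identification of the delicate points (start-up, the $\bar\lambda=\infty$ case, and the non-strict monotonicity issue in upgrading to equality) is accurate. One correction: in case $\bar\lambda(x_0)=\infty$, the inequality $u_{x_0,\lambda}(y)\le u(y)$ with $\lambda\to\infty$ does not by itself force constancy---the image point goes to $x_0$ but the prefactor $(\lambda/|y-x_0|)^{n-2}\to\infty$, so you actually get $\liminf_{|z|\to\infty}|z|^{n-2}u(z)\ge u(y)|y-x_0|^{n-2}$ for all $y$, and one needs a further argument (in Li--Zhang, one shows $\bar\lambda\equiv\infty$ at \emph{every} center and then invokes a second calculus lemma to conclude $u$ is constant). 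Apart from this, your sketch matches the original source.
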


It is worth to point out that the above theorem (due to Li-Zhang) readily implies that any positive $C^2$ solution to the Allen-Cahn equation on an Euclidean space $\mathbb{R}^n$ (with $n \geq 4$) or the static Fisher-KPP equation on $\mathbb{R}^n$ (with $n\geq 6$) must be constant $1$.

Now, let's recall some relevant work with the above equation \eqref{equa1} on a complete Riemannian manifold. In the sequel, we always assume that  $(M,g)$ is an n-dimension complete manifold with Ricci lower bound $\mathrm{Ric}_g\geq -(n-1)\kappa$.

In 2011, Wang and Zhang \cite{MR2880214} proved any positive $p$-harmonic function $v$ on  $(M,g)$ with Ricci curvature $\mathrm{Ric}_g\geq-(n-1)\kappa g$ satisfies
	\begin{align*}
		\sup_{B_{R/2}(o)}\frac{|\nabla v|}{v}\leq C_{n,p}\frac{1+\sqrt\kappa R}{R}.
	\end{align*}
Wang-Zhang's results generalized Cheng-Yau's result (\cite{MR431040}) of $p=2$ to any $p>1$ and improved Kotschwar-Ni's results \cite{MR2518892} by weakening sectional curvature condition to the Ricci curvature condition.
Later, Sung and Wang \cite{MR3275651} in 2014 studied the sharp gradient estimates for the eigenfunctions for $p$-Laplace operator. As a corollary, they proved that the optimal constant $C_{n,p}$ in the above estimate is $(n-1)/(p-1)$ if $v$ is a global solution.
	
Here we recall also some previous work on gradient estimates of positive solutions to Lane-Emden-Fowler equation on $(M,g)$, i.e.
\begin{align}\label{equa:1.3}
\begin{cases}
\Delta_pv + bv^q=0, \quad & \text{in}\,  M;\\
v>0,\quad & \text{in}\, M.
\end{cases}
\end{align}
In the case $p=2$ and $a>0$, Peng-Wang-Wei in 2020 derived some gradient estimates for the solutions to the equation. In particular, the following new estimate was obtained
\begin{align*}
\frac{\left\vert \nabla u \right\vert^{2}}{u^{2}}+au^{q-1}
\leq& \frac{2n}{2-n\max\{0,q-1\}}\left(\frac{C_{1}^{2}(n-1)(1+\sqrt{\kappa}R)+C_{2}}{R^{2}}\right.\\
&\left.+ 2\kappa+\frac{2nC_{1}^{2}}{(2+n\max\{0,q-1\})R^{2}}\right),
\end{align*}
if the Ricci curvature of domain manifold satisfies $\mathrm{Ric}_g\geq-(n-1)\kappa$ and $q<\frac{n+2}{n}$. Obviously, this is a stronger estimate than the logarithmic gradient estimate (also see \cite{HW, PWW1}).

Later, Wang-Wei \cite{MR4559367} derived Cheng-Yau type gradient estimates for positive solutions to \eqref{equa:1.3} under the assumption
	$$
	q\in \left(-\infty,\quad \frac{n+1}{n-1}+\frac{2}{\sqrt{n(n-1)}}\right).
	$$
For any $p>1$ and $a>0$, He-Wang-Wei \cite{hewangwei2024} proved that  the Cheng-Yau type gradient estimate holds for positive solutions of \eqref{equa:1.3} when
	$$
	q\in \left(-\infty,\quad \frac{n+3}{n-1}(p-1)\right).
	$$
We refer to \cite{MR3912761, MR3866881} when the $p$-Laplacian operator in the equation \eqref{equa:1.3} is replaced by weighted $p$-Laplacian Lichnerowicz operator.

In \cite{WZ1} Wang and A. Zhang considered also the gradient estimates on positive solutions to the following elliptic equation defined on a complete Riemannian manifold $(M,\,g)$:
$$\Delta v+v^r-v^s= 0,$$
where $r$ and $s$ are two real constants. The equation is just Eq. \eqref{equa1} with $p=2, b=1$ and $c=-1$. When $(M,\,g)$ satisfies $\mathrm{Ric}_g\geq -(n-1)\kappa$ (where $n\geq 2$ is the dimension of $M$ and $\kappa$ is a nonnegative constant), they employed the Nash-Moser iteration technique to derive a Cheng-Yau's type gradient estimate for positive solution to the above equation under some suitable geometric and analysis conditions.


Inspired by the previous work, we will combine the point-wise estimate and the Nash-Moser iteration technique to establish an exact Cheng-Yau logarithmic gradient estimate for positive solutions to equation
	$$\Delta_pv+bv^q+cv^r=0$$
on a complete Riemannian manifold with Ricci curvature bounded from below. Our main theorem is stated as follows.
	
\begin{thm}\label{thm1}
Let $(M^n,g)$ be an $n$-dimensional complete Riemannian manifold with Ricci curvature satisfying $\ric_g\geq -(n-1)\kappa g$ for some constant $\kappa\geq 0$. Suppose that $p>1$ and $v$ is a $C^1$-positive solution of \eqref{equa1} on a geodesic ball $B_R(o)\subset M$. If $(b,c,q, r)\in W_1\cup W_2 \cup W_3$, where $W_1$, $W_2$ and $W_3$   are defined respectively by
\begin{align}\label{cond1a}
W_1=&\left\{(b, c, q,r): b\left(\frac{n+1}{n-1}-\frac{q}{p-1}\right)\geq 0 \quad \text{and}\quad c\left(\frac{n+1}{n-1}-\frac{r}{p-1}\right)\geq 0\right\},
\\
\label{cond2a}
W_2=&\left\{(b, c, q,r): 	c\leq 0\quad\text{and}\quad \left|\frac{q}{p-1}-\frac{n+1}{n-1}\right|< \frac{2}{n-1}\right\},
\\
\label{cond3a}
	W_3=&\left\{(b, c, q,r): b\geq 0\quad \text{and}\quad \left|\frac{r}{p-1}-\frac{n+1}{n-1}\right|< \frac{2}{n-1}\right\},
\end{align}
then there holds
\begin{align}\label{equa:1.8}
\sup_{B_{R/2}(o)} \frac{|\nabla v|}{v}\leq C(n,p,q,r)\frac{1+\sqrt{\kappa}R}{R},
\end{align}
where the constant $C(n,p,q,r)$ depends only on $n$, $p$, $q$ and $r$ and $C(n,p,q,r)$ corresponding to different $W_i$ ($i=1,2,3$) may be different from each other.
\end{thm}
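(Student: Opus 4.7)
The plan is to adapt the Nash-Moser iteration scheme to a suitably chosen log-gradient quantity. Following the strategy of Wang-Zhang and He-Wang-Wei, I would work with the function $f = |\nabla v|^2/v^2$ (or the scaled variant $|\nabla v|^p/v^p$ that is natural for $\Delta_p$) on the regular set $\{|\nabla v|>0\}$, where the $p$-Laplacian is uniformly elliptic. The standard regularization replacing $|\nabla v|^{p-2}$ by $(\epsilon+|\nabla v|^2)^{(p-2)/2}$ removes the degeneracy, and the estimates are passed to the limit at the end.

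The core step is a Bochner-type computation for the linearized $p$-Laplace operator $\mathcal{L}$, whose principal part is $|\nabla v|^{p-2}\bigl[\Delta+(p-2)\hess(\cdot)(\nu,\nu)\bigr]$ with $\nu=\nabla v/|\nabla v|$. Applying $\mathcal{L}$ to $f$, using \eqref{equa1} to eliminate the third-order term $\langle\nabla\Delta_p v,\nabla v\rangle$, and invoking a Kato-type refinement of the Cauchy-Schwarz estimate for $|\hess v|^2$ on the generalized eigendirection, I expect a differential inequality of the schematic form
\begin{align*}
\mathcal{L} f + \langle X,\nabla f\rangle \;\geq\; A(n,p)\,f^2 \;-\; B(n,p)\kappa f \;-\; b\!\left(\tfrac{q}{p-1}-\tfrac{n+1}{n-1}\right)\alpha_1 v^{q-1} f \;-\; c\!\left(\tfrac{r}{p-1}-\tfrac{n+1}{n-1}\right)\alpha_2 v^{r-1} f,
\end{align*}
where $X$ is a drift vector field controlled by $f^{1/2}$ and $A,B,\alpha_i>0$ are explicit. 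The three regions $W_1,W_2,W_3$ are precisely designed so that the last two terms either carry the favourable sign (as in $W_1$, where both bracketed factors have the opposite sign to $b$, resp.\ $c$, so the contributions are $\leq 0$ and can be dropped) or, if bad-signed, can be absorbed into $A(n,p)f^2$ by Young's inequality after extracting a $v$-independent power; the window $|q/(p-1)-(n+1)/(n-1)|<2/(n-1)$ in $W_2$, resp.\ its analogue for $r$ in $W_3$, is exactly the range for which the absorbed coefficient of $f^2$ remains strictly positive. Isolating the correct Young splitting when two exponents $q\leq r$ and two sign conditions must be tracked together is what I expect to be the main technical obstacle.

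With the differential inequality in hand, I would execute the Nash-Moser iteration in standard fashion: pick a Saloff-Coste-type cutoff $\phi$ supported in $B_R(o)$ with $\phi\equiv 1$ on $B_{R/2}(o)$ and $|\nabla\phi|\leq C/R$; multiply the inequality by $f^{a-1}\phi^{2a}$ for large $a$; integrate by parts against $\mathcal{L}$, using positivity of $|\nabla v|^{p-2}$ on the regular set together with Young's inequality to hide the resulting gradient terms; and apply the local Sobolev inequality valid on manifolds with $\ric_g\geq -(n-1)\kappa g$. This produces a reverse-H\"older chain of the type $\|f\|_{L^{a\theta}(B_{r'})}\leq C(a,\theta,R,\kappa)^{1/a}\|f\|_{L^a(B_r)}$ for a fixed $\theta>1$, which iterates to the pointwise bound $\sup_{B_{R/2}(o)} f\leq C(n,p,q,r)(1+\kappa R^2)/R^2$; taking square roots gives \eqref{equa:1.8}. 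Removing the regularization and handling the set $\{\nabla v=0\}$ by a standard approximation/continuity argument completes the proof in each of the three cases $W_1, W_2, W_3$.
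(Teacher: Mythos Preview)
Your overall architecture—logarithmic substitution $u=-(p-1)\log v$, Bochner formula for the linearized $p$-Laplacian, then Nash--Moser iteration against Saloff-Coste's Sobolev inequality—matches the paper's, and your reading of $W_1$ (the two nonlinearity contributions carry the good sign and are simply dropped) is correct. The gap is in your mechanism for $W_2$ and $W_3$.

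Your schematic differential inequality is missing a term, and that term is the whole point. When you feed the equation into the refined Hessian bound $|\nabla\nabla u|^2 \geq u_{11}^2 + \frac{1}{n-1}\bigl(\sum_{i\geq 2} u_{ii}\bigr)^2$, the square on the right produces, besides the $f^2/(n-1)$ you recorded, a \emph{quadratic} term in the nonlinearity, $\frac{1}{n-1}\bigl(b_1 e^{q_1 u}+c_1 e^{r_1 u}\bigr)^2 f^{2-p}$, together with a cross term $u_{11}\cdot(\text{nonlinearity})$. The paper applies $\mathcal{L}$ to $f^{\alpha}$ rather than to $f$ precisely so that the extra $(2\alpha-1)(p-1)u_{11}^2$ absorbs this cross term, leaving a net positive coefficient $B_{n,p,\alpha}=\frac{1}{n-1}\bigl(1-\frac{p-1}{2\alpha-1}\bigr)$ on the quadratic. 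It is this quadratic, not the bare $A(n,p)f^2$, that swallows the bad-signed linear-in-nonlinearity term via $B X^2 + cXf \geq -\frac{c^2}{4B}f^2$; the window $\bigl|\frac{q}{p-1}-\frac{n+1}{n-1}\bigr|<\frac{2}{n-1}$ is exactly the condition that $\frac{1}{n-1}-\frac{c^2}{4B_{n,p,\alpha}}$ stays positive as $\alpha\to\infty$. Your proposal to ``absorb into $A(n,p)f^2$ by Young's inequality after extracting a $v$-independent power'' cannot work as written: the offending term carries a factor $v^{q-1}$ (equivalently $e^{q_1 u}$) with no a priori bound, and Young against $f^2$ alone leaves a $v^{2(q-1)}$ residue you have no way to control. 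One further ingredient you do not mention but which is essential: in $W_2$ (resp.\ $W_3$) the two linear-in-nonlinearity terms are first regrouped with the common coefficient $q_1+\frac{2}{n-1}$ (resp.\ $r_1+\frac{2}{n-1}$), and the leftover $c_1(r_1-q_1)e^{r_1 u}$ (resp.\ $b_1(q_1-r_1)e^{q_1 u}$) is non-negative precisely because of the standing hypothesis $q\leq r$ combined with $c\leq 0$ (resp.\ $b\geq 0$).
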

	
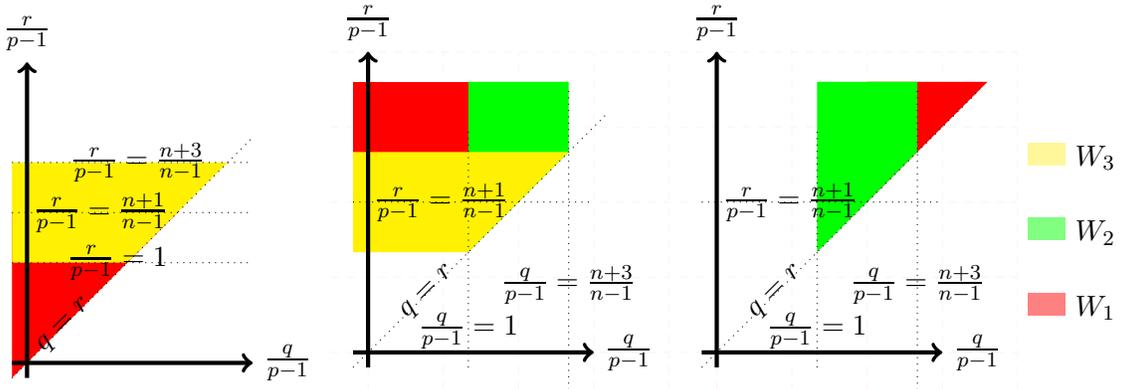
\begin{figure}[h]
		\begin{tikzpicture}
		\pgfsetfillopacity{0.5}
		\path[fill=red](-0.2, -0.2)--(2,2)--(-0.2, 2);
		\path[fill=yellow](4/3, 4/3)--(8/3,8/3)--(-0.2,8/3)--(-0.2, 4/3);
		\draw[dotted] (-0.2, -0.2) -- (3, 3);
		\draw[dotted] (-0.2, 2 )--(2, 2) node[anchor=east]{$\frac{r}{p-1}= \frac{n+1}{n-1}$}-- (3, 2 );
		\draw[dotted] (-0.2, 8/3 )--(2.5, 8/3) node[anchor=east]{$\frac{r}{p-1}= \frac{n+3}{n-1}$}-- (3, 8/3 );
		\draw[dotted] (-0.2, 4/3 )--(2, 4/3) node[anchor=east]{$\frac{r}{p-1}= 1$}-- (3, 4/3);
		\draw (0.5,0.5) node[] {\rotatebox{45}{$q=r $}};
		\draw[->,ultra thick] (-0.2,0)--(3,0) node[right]{$\frac{q}{p-1}$};
		\draw[->,ultra thick] (0,-0.2)--(0,4) node[above]{$\frac{r}{p-1}$};
	\end{tikzpicture}			
	\begin{tikzpicture}
		\pgfsetfillopacity{0.5}
		\draw[help lines, color=red!5, dashed] (-0.5,-0.5) grid (4,4);
		\path[fill=red](-0.2, 2)--(2,2)--(2,3.6)--(-0.2, 3.6);
		\path[fill=green](4/3,3.6)--(4/3,4/3)--(8/3,8/3)--(8/3, 3.6);
		\path[fill=yellow](4/3, 4/3)--(8/3,8/3)--(-0.2,8/3)--(-0.2, 4/3);
		\draw[dotted] (-0.2, -0.2) -- (3.2,3.2);
		\draw[dotted] (4/3,-0.2) --(4/3, 2/3)  node[anchor=north]{$\frac{q}{p-1}=1 $} -- (4/3,8/3) -- (4/3,3);
		\draw[dotted] (-0.2, 2 )--(2, 2) node[anchor=east]{$\frac{r}{p-1}= \frac{n+1}{n-1}$}-- (3, 2 );
		\draw[dotted] (8/3,-0.5) --(8/3, 1.3)  node[anchor=north]{$\frac{q}{p-1}= \frac{n+3}{n-1}$} -- (8/3,3.5);
		\draw (0.8,0.8) node[] {\rotatebox{45}{$q=r $}};
		\draw[->,ultra thick] (-0.2,0)--(3,0) node[right]{$\frac{q}{p-1}$};
		\draw[->,ultra thick] (0,-0.2)--(0,4) node[above]{$\frac{r}{p-1}$};
	\end{tikzpicture}		
	\begin{tikzpicture}
		\pgfsetfillopacity{0.5}
		\draw[help lines, color=red!5, dashed] (-0.5,-0.5) grid (4,4);
		\path[fill=red](2, 2)--(3.6,3.6)--(2,3.6);
		\path[fill=green](4/3,4/3)--(8/3,8/3)--(8/3,3.6)--(4/3, 3.6);
		\draw[dotted] (-0.2, -0.2) -- (3.5,3.5);
		\draw[dotted] (4/3,-0.2) --(4/3, 2/3)  node[anchor=north]{$\frac{q}{p-1}=1 $} -- (4/3,8/3) -- (4/3,3);
		\draw[dotted] (-0.2, 2 )--(2, 2) node[anchor=east]{$\frac{r}{p-1}= \frac{n+1}{n-1}$}-- (3, 2 );
		\draw[dotted] (8/3,-0.5) --(8/3, 1.3)  node[anchor=north]{$\frac{q}{p-1}= \frac{n+3}{n-1}$} -- (8/3,3.5);
		\draw (0.8,0.8) node[] {\rotatebox{45}{$q=r $}};
		\draw[->,ultra thick] (-0.2,0)--(3,0) node[right]{$\frac{q}{p-1}$};
		\draw[->,ultra thick] (0,-0.2)--(0,4) node[above]{$\frac{r}{p-1}$};
	\end{tikzpicture}				
	\begin{tikzpicture}
		\path[fill=red!0](0, 1)--(0.5,1)--(0.5,1.1) --(0.5,1.2)--(0,1.2);		
		\path[fill=red!50](0, 2)--(0.5,2)--(0.5,2.1) node[anchor=west]{$W_1$}--(0.5,2.3)--(0,2.3);				
		\path[fill=green!50](0, 3)--(0.5,3)--(0.5,3.1) node[anchor=west]{$W_2$}--(0.5,3.3)--(0,3.3);
		\path[fill=yellow!50](0,4)--(0.5,4)--(0.5,4.1) node[anchor=west]{$W_3$}--(0.5,4.3)--(0,4.3);
	\end{tikzpicture}
	\caption{The region of $(\frac{q}{p-1}, \frac{r}{p-1})$ in $W_1, W_2, W_3$ with different signs of $b,c$.The first figure shows the case $b>0,c>0$; the second figure shows the case $b>0, c<0$; the third figure shows the case $b<0, c<0$.}
\end{figure}
	
\begin{rem} In the case $p=2$, $b=1$ and $c=-1$, Wang and Zhang employed in \cite{WZ1} the same method to approach the equation as in present paper. But, they did not consider the case $(q, r)\in W_1$, where $W_1$ is the same as in \thmref{thm1}, i.e.
\begin{align}
W_1=\left\{(q,r):\,\, \frac{n+1}{n-1}-q\geq 0 \quad \text{and} \quad \frac{n+1}{n-1}-r\leq 0\right\}.
\end{align}
\end{rem}

\begin{rem}
In the case $c=0$, \thmref{thm1} implies that  if
$$
b>0\quad\text{and}\quad q<\frac{n+3}{n-1}(p-1);
$$
or,
$$
 b<0 \quad\text{and}\quad q>p-1,
$$
then any positive solution of $\Delta_pv+bv^q=0$ satisfies the estimate \eqref{equa:1.8}, which covers the main theorem in \cite{hewangwei2024}.
\end{rem}

By the gradient estimate obtained above, we can get a Liouville-type results for such positive solutions on Riemannian manifolds with non-negative Ricci curvature.

\begin{thm}\label{thm1.4}
Let $(M^n,g)$ be an $n$-dimensional complete non-compact Riemannian manifold with non-negative Ricci curvature. Assume one of conditions \eqref{cond1a}, \eqref{cond2a}, and \eqref{cond3a} is fulfilled. In addition, we assume $q\neq r$ and $bv^q+cv^r\not\equiv0$ to avoid some duplicates. Then we have
\begin{itemize}
	\item If $bc>0$, then \eqref{equa1} admits no positive solutions on $M$;
	\item If $b>0$ and $c<0$, then the \eqref{equa1} has the only trivial solution $v=(-\frac{c}{b})^{\frac{1}{q-r}}$;
	\item If $b=0$ and $c\neq0$; or, $b\neq 0$ and $c=0$, then \eqref{equa1} admits no positive solutions on $M$.
\end{itemize}
\end{thm}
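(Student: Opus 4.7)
The plan is to extract the Liouville-type conclusion directly from the local gradient estimate \thmref{thm1} by sending the radius to infinity, and then to analyze the resulting constant equation algebraically. Since $\ric_g\geq 0$, we may take $\kappa=0$, so for every $R>0$ and every fixed $o\in M$ the estimate \eqref{equa:1.8} reads
\begin{align*}
\sup_{B_{R/2}(o)} \frac{|\nabla v|}{v} \leq \frac{C(n,p,q,r)}{R}.
\end{align*}
Because $M$ is complete and non-compact, the ball $B_{R/2}(o)$ exhausts $M$ as $R\to\infty$, while the right-hand side tends to zero (the constant $C(n,p,q,r)$ being independent of $R$). Hence $|\nabla v|\equiv 0$ on $M$, and $v\equiv v_0$ for some positive constant $v_0$.

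Substituting $v\equiv v_0$ into \eqref{equa1} gives $\Delta_p v_0 = 0$, so the problem reduces to solving
\begin{align*}
bv_0^q + cv_0^r = 0, \qquad v_0>0.
\end{align*}
Factoring out $v_0^q$ and using $q\neq r$, this is equivalent to $b + cv_0^{r-q}=0$. The three claimed cases then follow by elementary sign analysis. If $bc>0$, then $b$ and $cv_0^{r-q}$ share the same sign and their sum is nonzero, so no positive $v_0$ exists. If $b>0$ and $c<0$, we solve uniquely $v_0^{r-q} = -b/c > 0$, giving $v_0 = (-c/b)^{1/(q-r)}$. Finally, if exactly one of $b,c$ vanishes, the surviving monomial $bv_0^q$ or $cv_0^r$ is nonzero for every $v_0>0$, so no solution exists.

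There is no serious obstacle once \thmref{thm1} is available: the essential feature being exploited is that the right-hand side of \eqref{equa:1.8} depends only on $n,p,q,r$ and $R$, with no hidden dependence on $v$, on the injectivity radius, or on any local geometry beyond the Ricci lower bound, which legitimizes passing to the limit $R\to\infty$. For completeness I would also remark why the remaining sign pattern $b<0,\,c>0$ does not appear in the statement: combined with $q\leq r$, $q\neq r$, and the hypothesis $(b,c,q,r)\in W_1\cup W_2\cup W_3$, this pattern is excluded, since $W_2$ requires $c\leq 0$, $W_3$ requires $b\geq 0$, and membership in $W_1$ in this regime would force $q=r$.
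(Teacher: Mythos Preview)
Your proof is correct and follows essentially the same route as the paper: apply \thmref{thm1} with $\kappa=0$, let $R\to\infty$ to force $v$ constant, then do the algebraic case analysis on $bv_0^q+cv_0^r=0$. Your direct treatment of the case where exactly one of $b,c$ vanishes (the paper instead defers to the Lane--Emden result in \cite{hewangwei2024}) and your added remark explaining why the sign pattern $b<0,\,c>0$ is excluded by $q\neq r$ and $(b,c,q,r)\in W_1\cup W_2\cup W_3$ are both nice clarifications not spelled out in the paper.
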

	
By \thmref{thm1}, it is not hard to get the Harnack inequality which is a natural corollary of the gradient estimate.
	
\begin{thm}\label{thm1.5}
Let $(M^n,g)$ be a complete non-compact Riemannian manifold with Ricci curvature $\ric_g\geq-(n-1)\kappa g$, where $\kappa$ is a non-negative constant. Suppose
$p>1$ and $v\in C^1(B_R(o))$ is a positive solution to equation \eqref{equa1}, defined on a geodesic ball $B_R(o)\subset M$, with constants $b,c,q,r$ satisfying \eqref{cond1a} or \eqref{cond2a} or \eqref{cond3a}. Then, for any $x, y\in B_{R/2}(o)$ there holds
		$$
		v(x)/v(y)\leq  e^{C(n,p,q,r)(1+\sqrt{\kappa}R)}.
		$$
Moreover, if $v\in C^1(M)$ is an entire positive solution to equation \eqref{equa1} with constants $b,c,q,r$ satisfying \eqref{cond1a} or \eqref{cond2a} or \eqref{cond3a} in $M$, then, for any $x,\, y\in M$ there holds
		$$
		v(x)/v(y)\leq  e^{C(n,p,q,r)\sqrt{\kappa}d(x,y)},
		$$
where $d(x,y)$ is the geodesic distance between $x$ and $y$.
\end{thm}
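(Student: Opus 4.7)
The plan is to read the Harnack inequality off as a direct integration of the logarithmic gradient estimate in \thmref{thm1}, with no new analytic input. Under any of the three conditions \eqref{cond1a}--\eqref{cond3a}, \thmref{thm1} provides a constant $C=C(n,p,q,r)$ with
$$\sup_{B_{R/2}(o)}\frac{|\nabla v|}{v}\le C\,\frac{1+\sqrt{\kappa}R}{R}.$$
Since $\log v$ is $C^1$, bounding $v(x)/v(y)$ reduces to controlling $|\log v(x)-\log v(y)|$ by integrating $|\nabla\log v|$ along a suitable curve joining $x$ and $y$.

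For the local inequality I would, given $x,y\in B_{R/2}(o)$, construct a piecewise-geodesic path $\gamma$ from $x$ to $y$ by concatenating a minimal geodesic from $x$ to $o$ with one from $o$ to $y$. Any point $z$ on a minimal geodesic from $o$ to $x$ satisfies $d(z,o)\le d(x,o)<R/2$, so $\gamma$ stays inside $B_{R/2}(o)$, and its total length $L$ obeys $L<d(x,o)+d(o,y)<R$. Applying the local gradient bound along $\gamma$ yields
$$|\log v(x)-\log v(y)|\le \int_\gamma \frac{|\nabla v|}{v}\,ds\le \Bigl(\sup_{B_{R/2}(o)}\frac{|\nabla v|}{v}\Bigr)\cdot L\le C(n,p,q,r)\bigl(1+\sqrt{\kappa}R\bigr),$$
and exponentiating gives the first assertion. (One must of course symmetrize in $x,y$ to get $v(x)/v(y)$ rather than $|\log v(x)-\log v(y)|$; the bound is symmetric, so this is automatic.)

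For the global inequality I would fix $x,y\in M$ and apply the local estimate on $B_R(o)$ for arbitrary $R$, letting $R\to\infty$. Since $C(1+\sqrt{\kappa}R)/R\to C\sqrt{\kappa}$, one obtains the pointwise bound $|\nabla v|/v\le C\sqrt{\kappa}$ on all of $M$. Integrating $|\nabla\log v|$ along a minimal geodesic joining $x$ and $y$ (which exists by completeness and has length $d(x,y)$) then gives
$$|\log v(x)-\log v(y)|\le C(n,p,q,r)\,\sqrt{\kappa}\,d(x,y),$$
from which the desired Harnack bound follows by exponentiation.

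There is no genuine obstacle in this argument: all of the analytic difficulty has already been discharged by \thmref{thm1}. The only point requiring a small bit of care is ensuring the connecting curve is contained in the ball where the local gradient estimate is valid, which is why I route the local path through the center $o$ rather than using a direct minimal geodesic between $x$ and $y$ (the latter need not remain in the geodesic ball when the ball is not convex).
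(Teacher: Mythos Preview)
Your proposal is correct and follows essentially the same approach as the paper: both argue by integrating the logarithmic gradient bound from \thmref{thm1} along geodesics through the center $o$ (the paper does this by separately bounding $v(x)/v(o)$ and $v(o)/v(y)$, which is equivalent to your concatenated path), and both handle the global case by letting $R\to\infty$ to obtain $|\nabla\log v|\le C\sqrt{\kappa}$ and then integrating along a minimizing geodesic between $x$ and $y$.
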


If $v$ is a global positive solution of \eqref{equa1}, by the local gradient estimate, we can bound the gradient globally. Moreover, employing a trick due to Sung and Wang in \cite{MR3275651}, we can give an explicit expression of global universal bound for gradient of positive solution to \eqref{equa1}.

\begin{thm}\label{t10}
Let $(M^n,g)$ be a complete non-compact Riemannian manifold with $\mathrm{Ric}_g\geq-(n-1)\kappa g$ where $\kappa\geq 0$, and $v$ be a global positive solution of \eqref{equa1} in $(M, g)$. The regions $W_i$ ($i=1, 2, 3$) are the same as that in \thmref{thm1}.
\begin{enumerate}
\item If $(b, c, q, r)\in W_1$, then we have
\begin{align*}
\frac{|\nabla v|}{v}(x)\leq \frac{(n-1)\sqrt{\kappa}}{p-1}, \quad\forall x\in M.
\end{align*}
		
\item If $(b, c, q, r)\in W_2$, then we have
\begin{align*}
\frac{|\nabla v|}{v}(x)\leq \frac{2\sqrt{\kappa}}{(p-1)\sqrt{\left(\frac{q}{p-1}-1\right)\left(\frac{n+3}{n-1}-\frac{q}{p-1}\right)}}, \quad\forall x\in M.
\end{align*}
\item If $(b, c, q, r)\in W_3$, then we have
\begin{align*}
\frac{|\nabla v|}{v}(x)\leq \frac{2\sqrt{\kappa}}{(p-1)\sqrt{\left(\frac{r}{p-1}-1\right)\left(\frac{n+3}{n-1}-\frac{r}{p-1}\right)}}, \quad\forall x\in M.
\end{align*}

\end{enumerate}
\end{thm}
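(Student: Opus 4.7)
The plan is to combine the local gradient estimate of Theorem~\ref{thm1} with the refinement trick due to Sung--Wang \cite{MR3275651} in order to produce the sharp global constants.

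First, applying Theorem~\ref{thm1} on $B_R(o)\subset M$ and letting $R\to\infty$ immediately gives the non-sharp global bound $|\nabla v|/v \le C(n,p,q,r)\sqrt{\kappa}$ on $M$. This preliminary step is the key enabler: it guarantees that the auxiliary function $F:=|\nabla v|/v$ is bounded on all of $M$, so integrals of $F^\alpha$ against a cutoff over expanding geodesic balls are finite and may be manipulated without worrying about the behaviour at infinity.

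Next, I would revisit the Nash--Moser iteration underlying the proof of Theorem~\ref{thm1}. Its core inequality has the schematic form
\begin{equation*}
\int_{B_R(o)} F^{\alpha+1}\eta^{2s}\,dV \;\le\; A(\alpha,n,p,q,r,\kappa)\int_{B_R(o)} F^{\alpha}\eta^{2s}\,dV \;+\; B(\alpha,R)\int_{B_R(o)} F^{\alpha}\,dV,
\end{equation*}
with $A(\alpha,\ldots)$ independent of $R$ and $B(\alpha,R)\to 0$ as $R\to\infty$ (this decay comes from the $|\nabla\eta|$-factors, which are $O(1/R)$ for a standard cutoff). The Sung--Wang trick exploits the global boundedness of $F$ together with $R\to\infty$ to discard the cutoff contribution entirely; the residual relation forces $\sup_M F \le A(\alpha,\ldots)$ for every admissible iteration parameter $\alpha$, and minimizing $A(\alpha,\ldots)$ over $\alpha$ yields the explicit sharp constant. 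The three regions $W_1,W_2,W_3$ correspond to three distinct optima. In $W_1$, the sign conditions $b(\tfrac{n+1}{n-1}-\tfrac{q}{p-1})\ge 0$ and $c(\tfrac{n+1}{n-1}-\tfrac{r}{p-1})\ge 0$ place both nonlinear terms on the favorable side of the Bochner-type identity, and the optimization collapses to the $p$-harmonic case of Sung--Wang, yielding $(n-1)\sqrt{\kappa}/(p-1)$. In $W_2$, the hypothesis $c\le 0$ kills the $cv^r$ contribution, while the constraint $|\tfrac{q}{p-1}-\tfrac{n+1}{n-1}|<\tfrac{2}{n-1}$ forces $(\tfrac{q}{p-1}-1)(\tfrac{n+3}{n-1}-\tfrac{q}{p-1})>0$; the extremal $\alpha$ then produces exactly the square-root factor appearing in the stated bound. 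Case $W_3$ is symmetric to $W_2$ under the interchange $(b,q)\leftrightarrow(c,r)$.

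The main obstacle is the algebraic bookkeeping case by case: one must verify that the Bochner-type inequality inherited from the proof of Theorem~\ref{thm1}, with all coefficients coming from the nonlinearity $bv^q+cv^r$, admits an iteration parameter $\alpha$ (depending on $n,p,q,r$) for which $A(\alpha,\ldots)$ equals the announced sharp constant. In particular, checking that the extremal $\alpha$ lies in the admissible range in each of $W_1,W_2,W_3$, and that the cross terms produced by $bv^q+cv^r$ contribute with the sign dictated by the defining inequalities of $W_i$, is the delicate step where the three cases genuinely diverge and where the sharpness of the constants is decided.
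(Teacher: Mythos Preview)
Your outline misidentifies the mechanism of the Sung--Wang trick, and the argument you sketch would not close. Two concrete obstructions: first, on a manifold with $\mathrm{Ric}\ge -(n-1)\kappa$ the volume of $B_R$ may grow like $e^{(n-1)\sqrt{\kappa}R}$, so the cutoff contribution $B(\alpha,R)\int_{B_R}F^\alpha$---which scales like $R^{-2}\,\mathrm{vol}(B_R)$ once $F$ is known to be bounded---need \emph{not} tend to zero as $R\to\infty$; you cannot simply discard it. Second, even if one could pass to the limit and obtain $\int_M F^{\alpha+1}\le A(\alpha)\int_M F^\alpha$, such an integral inequality does not force $\sup_M F\le A(\alpha)$, and there is no clear mechanism by which ``minimizing $A(\alpha)$ over $\alpha$'' would produce the sharp constants.

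The paper's actual argument is different in substance. One fixes the \emph{target} bound $y$ in advance (namely $y_1=(n-1)^2\kappa$ in $W_1$, and the analogous expressions $y_2,y_3$ in $W_2,W_3$), introduces the truncation $\omega=(f-y-\delta)^+$, and uses the pointwise Bochner estimates \eqref{equa:3.13a}, \eqref{equa:3.14a}, \eqref{equa:3.15a} together with the crude a~priori bound on $f$ to obtain $\mathcal{L}(\omega^\alpha)\ge 2\alpha\omega^{\alpha-1}(l_1\omega-l_2|\nabla\omega|)$ for constants $l_1,l_2>0$ (Lemma~\ref{lem42}). One then tests against $\eta^2\omega^\gamma$ on a ball $B_{k+1}$ with $\eta\equiv 1$ on $B_k$: if $\omega\not\equiv 0$, integration by parts yields $l_1\int_{B_k}\omega^{\alpha+\gamma}\le C(\gamma)\int_{B_{k+1}}\omega^{\alpha+\gamma}$ with $C(\gamma)\to 0$ as $\gamma\to\infty$, so $\int_{B_k}\omega^{\alpha+\gamma}$ grows in $k$ at a geometric rate that can be made arbitrarily large. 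Since $\omega$ is bounded and Bishop--Gromov controls $\mathrm{vol}(B_k)$ by $e^{(n-1)\sqrt{\kappa}k}$, this is a contradiction (Lemma~\ref{lem43}); hence $\omega\equiv 0$, i.e.\ $f\le y+\delta$, and letting $\delta\downarrow 0$ finishes. The sharp constant enters through the \emph{choice of truncation level} $y$, not through any optimization in $\alpha$.
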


As direct consequences of the above theorem we have the following corollaries:
\begin{cor}\label{thm1.6}
Let $(M^n,g)$ be a complete non-compact Riemannian manifold with $\mathrm{Ric}_g\geq-(n-1)\kappa g$ where $\kappa\geq 0$, and $v$ be an entire positive solution to the generalized Allen-Cahn equation with $p>1$
	$$\Delta_pv + v - v^3=0.$$
We have
\begin{enumerate}
\item if $p$ satisfies
\begin{equation}\label{equa1.10}
\frac{2n}{n+1}\leq p \leq \frac{4n-2}{n+1},
\end{equation}
then there holds
\begin{align*}
\frac{|\nabla v|}{v}(x)\leq \frac{(n-1)\sqrt{\kappa}}{p-1}, \quad\forall x\in M.
\end{align*}
\item if $p$ satisfies
\begin{equation}\label{equa1.11}
\frac{4n-2}{n+1}< p <4,
\end{equation}
then there holds
\begin{align*}
\frac{|\nabla v|}{v}(x)\leq \frac{2\sqrt{\kappa}}{(p-1)\sqrt{\left(\frac{3}{p-1}-1\right)\left(\frac{n+3}{n-1}-\frac{3}{p-1}\right)}}, \quad\forall x\in M.
\end{align*}

\item If $p$ satisfies
\begin{equation}\label{equa1.12}
\frac{2n+2}{n+3}< p <\frac{2n}{n+1},
\end{equation}
then there holds
\begin{align*}
\frac{|\nabla v|}{v}(x)\leq \frac{2\sqrt{\kappa}}{(p-1)\sqrt{\left(\frac{1}{p-1}-1\right)\left(\frac{n+3}{n-1}-\frac{1}{p-1}\right)}}, \quad\forall x\in M.
\end{align*}
\end{enumerate}

In particular, if $\kappa=0$ and
$$
\frac{2n+2}{n+3}< p < 4,
$$
then $v\equiv 1$ is the unique positive solution.
\end{cor}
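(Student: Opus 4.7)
\medskip

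The plan is to obtain the corollary as a straightforward specialization of Theorem~\ref{t10} to the parameters $b=1$, $c=-1$, $q=1$, $r=3$, followed by an elementary inspection of the limit $\kappa=0$ for the uniqueness claim. First I would rewrite each of the conditions defining $W_1$, $W_2$, $W_3$ under the present choice of constants. For $W_1$, the two sign conditions become $\tfrac{n+1}{n-1}\geq\tfrac{1}{p-1}$ and $\tfrac{n+1}{n-1}\leq\tfrac{3}{p-1}$; solving for $p$ gives exactly the sandwich $\tfrac{2n}{n+1}\leq p\leq \tfrac{4n-2}{n+1}$ of \eqref{equa1.10}. For $W_3$, the single inequality $\bigl|\tfrac{3}{p-1}-\tfrac{n+1}{n-1}\bigr|<\tfrac{2}{n-1}$ is equivalent to $1<\tfrac{3}{p-1}<\tfrac{n+3}{n-1}$, i.e.\ $\tfrac{4n}{n+3}<p<4$; in particular this holds on the range \eqref{equa1.11} since $\tfrac{4n-2}{n+1}>\tfrac{4n}{n+3}$. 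For $W_2$, the analogous computation with $q=1$ yields $\tfrac{2n+2}{n+3}<p<2$, which contains the range \eqref{equa1.12}.

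Once these three verifications are in place, the three gradient bounds (1), (2), (3) are just the corresponding three bounds of Theorem~\ref{t10} with $q=1$, $r=3$ substituted in; no further work is needed for the first part of the statement.

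For the uniqueness assertion, I would set $\kappa=0$ and observe that the right-hand sides in each of (1), (2), (3) vanish, provided the square roots in the denominators are strictly positive. The condition $\tfrac{2n+2}{n+3}<p<4$ ensures that whichever of the three cases applies, the discriminant $\bigl(\tfrac{q}{p-1}-1\bigr)\bigl(\tfrac{n+3}{n-1}-\tfrac{q}{p-1}\bigr)$ or $\bigl(\tfrac{r}{p-1}-1\bigr)\bigl(\tfrac{n+3}{n-1}-\tfrac{r}{p-1}\bigr)$ is strictly positive, so the bound $|\nabla v|/v\equiv 0$ on $M$ is legitimate and $v$ must be a positive constant. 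Substituting $v\equiv a>0$ into $\Delta_p v+v-v^3=0$ reduces the equation to $a-a^3=a(1-a^2)=0$, whose unique positive root is $a=1$, yielding $v\equiv 1$.

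The main (minor) obstacle is simply the bookkeeping: one has to confirm that the three $p$-ranges \eqref{equa1.10}--\eqref{equa1.12} together cover the full interval $\bigl(\tfrac{2n+2}{n+3},4\bigr)$ needed for the uniqueness conclusion, and that at the overlap points $p=\tfrac{2n}{n+1}$ and $p=\tfrac{4n-2}{n+1}$ the $W_1$ bound is the one used (matching Theorem~\ref{t10} exactly). I would also briefly record the monotonic comparisons $\tfrac{2n+2}{n+3}<\tfrac{2n}{n+1}\leq\tfrac{4n-2}{n+1}<4$ (valid for $n\geq 2$), which certify that the three sub-intervals tile $\bigl(\tfrac{2n+2}{n+3},4\bigr)$ without gap.
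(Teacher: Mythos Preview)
Your proposal is correct and follows essentially the same approach as the paper: both specialize the global gradient estimates of Theorem~\ref{t10} to $b=1$, $c=-1$, $q=1$, $r=3$ and translate the $W_i$ conditions into the $p$-ranges \eqref{equa1.10}--\eqref{equa1.12}. The only cosmetic difference is that the paper routes through Corollary~\ref{cor4.6} (the $b>0,\,c<0$ refinement of Theorem~\ref{t10}), whose disjoint regions $V_1,V_2,V_3$ coincide exactly with the three $p$-intervals, whereas you invoke Theorem~\ref{t10} directly and check that \eqref{equa1.11} and \eqref{equa1.12} are contained in the (slightly larger) $W_3$- and $W_2$-ranges; your extra bookkeeping for the uniqueness statement is also more explicit than the paper's.
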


\begin{cor}\label{thm1.7}
Let $(M^n,g)$ be a complete non-compact Riemannian manifold with Ricci curvature $\mathrm{Ric}_g\geq-(n-1)\kappa g$ where $\kappa\geq 0$, and $v$ be an entire positive solution to the generalized Fisher-KPP equation with $p>1$
$$\Delta_pv + v - v^2=0.$$
\begin{enumerate}
\item If $p$ satisfies
\begin{equation}\label{equa1.13}
\frac{2n}{n+1}\leq p \leq \frac{3n-1}{n+1},
\end{equation}
then there holds
\begin{align*}
\frac{|\nabla v|}{v}(x)\leq \frac{(n-1)\sqrt{\kappa}}{p-1}, \quad\forall x\in M.
\end{align*}
\item If $p$ satisfies
\begin{equation}\label{equa1.14}
\frac{3n-1}{n+1}< p <3,
\end{equation}
then there holds
\begin{align*}
\frac{|\nabla v|}{v}(x)\leq \frac{2\sqrt{\kappa}}{(p-1)\sqrt{\left(\frac{2}{p-1}-1\right)\left(\frac{n+3}{n-1}-\frac{2}{p-1}\right)}}, \quad\forall x\in M.
\end{align*}
\item If $p$ satisfies
\begin{equation}\label{equa1.15}
	\frac{2n+2}{n+3}< p <\frac{2n}{n+1},
\end{equation}
then there holds
\begin{align*}
	\frac{|\nabla v|}{v}(x)\leq \frac{2\sqrt{\kappa}}{(p-1)\sqrt{\left(\frac{1}{p-1}-1\right)\left(\frac{n+3}{n-1}-\frac{1}{p-1}\right)}}, \quad\forall x\in M.
\end{align*}
\end{enumerate}

In particular, if $\kappa=0$ and
$$
\frac{2n+2}{n+3}< p < 3,
$$
then $v\equiv 1$ is the unique positive solution.
\end{cor}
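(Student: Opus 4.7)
The plan is to observe that the generalized Fisher--KPP equation $\Delta_p v + v - v^2 = 0$ is precisely \eqref{equa1} with $(b,c,q,r)=(1,-1,1,2)$, so \corref{thm1.7} is a direct specialization of \thmref{t10}. For each of the three $p$-intervals I will identify which of the regions $W_1, W_2, W_3$ the tuple $(1,-1,1,2)$ falls into, and then read off the stated bound from the corresponding part of \thmref{t10}.

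For range \eqref{equa1.13}, I check $(1,-1,1,2)\in W_1$: the two defining inequalities become $\frac{n+1}{n-1}-\frac{1}{p-1}\geq 0$ and $-\left(\frac{n+1}{n-1}-\frac{2}{p-1}\right)\geq 0$, equivalent respectively to $p\geq \frac{2n}{n+1}$ and $p\leq \frac{3n-1}{n+1}$, which is exactly \eqref{equa1.13}; part (1) of \thmref{t10} then delivers the first bound. For range \eqref{equa1.14}, $b=1\geq 0$ is immediate, and the second $W_3$ inequality $\left|\frac{2}{p-1}-\frac{n+1}{n-1}\right|<\frac{2}{n-1}$ is equivalent to $\frac{3n-1}{n+3}<p<3$, which is implied by \eqref{equa1.14}; part (3) of \thmref{t10} with $r=2$ yields the second bound. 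For range \eqref{equa1.15}, $c=-1\leq 0$ is immediate, and $\left|\frac{1}{p-1}-\frac{n+1}{n-1}\right|<\frac{2}{n-1}$ is equivalent to $\frac{2n+2}{n+3}<p<2$, which contains \eqref{equa1.15}; part (2) of \thmref{t10} with $q=1$ yields the third bound.

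For the final uniqueness claim, setting $\kappa=0$ collapses each of the three gradient bounds to $|\nabla v|/v\equiv 0$, so $v$ is a positive constant on the connected manifold $M$. Substituting a positive constant into $\Delta_p v + v - v^2 = 0$ gives $v-v^2=0$, and positivity forces $v\equiv 1$. A direct check shows the three intervals \eqref{equa1.13}--\eqref{equa1.15} glue continuously at $p=\frac{2n}{n+1}$ and $p=\frac{3n-1}{n+1}$ and together cover $\left(\frac{2n+2}{n+3},3\right)$, so uniqueness holds throughout this range.

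The whole argument is essentially bookkeeping built on top of \thmref{t10}; there is no substantive analytic obstacle. The only thing to be careful about is the arithmetic linking the four inequalities defining $W_1,W_2,W_3$ (evaluated at $b=1$, $c=-1$, $q=1$, $r=2$) to the $p$-intervals \eqref{equa1.13}--\eqref{equa1.15}, and verifying that no gap is left in $\left(\frac{2n+2}{n+3},3\right)$ when assembling the uniqueness statement.
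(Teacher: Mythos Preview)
Your proposal is correct and follows essentially the same approach as the paper: the paper derives \corref{thm1.7} by specializing \corref{cor4.6} (itself a reorganization of \thmref{t10} for the case $b>0$, $c<0$) to $(b,c,q,r)=(1,-1,1,2)$, while you specialize \thmref{t10} directly. The arithmetic you carry out---matching \eqref{equa1.13} to $W_1$, \eqref{equa1.14} to $W_3$, and \eqref{equa1.15} to $W_2$, and then gluing the three intervals---is exactly what is needed, and the paper omits these details entirely (``The proof of \corref{thm1.7} is almost the same and we omit it'').
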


The structure of the paper is organized as follows. In Section 2, we give a meticulous estimate of $\mathcal{L} \left(|\nabla \log v|^{2\alpha}\right)$ where $\mathcal{L}$ is the linearized operator of the $p$-Laplacian operator at $u= -(p-1)\ln v$ (see \eqref{linearization} for the explicit definition of the operator $\mathcal{L}$). We also recall the Saloff-Coste's Sobolev embedding theorem in section 2. In section 3, we establish a universal integral estimate on $|\nabla \log v|^{2\alpha}$ and then use delicately the Nash-Moser iteration to prove main results of this paper. The proof of Theorem \ref{t10}, Corollary \ref{thm1.6} and Corollary \ref{thm1.7} is provided in Section 4.

\section{preliminary}
	
In this paper, $(M,g)$ is an $n$-dimensional Riemannian manifold and $\nabla$ is the associated Levi-Civita connection. For any real function $\varphi\in C^1(M)$, we denote $\nabla \varphi\in \Gamma(T^*M)$ by $\nabla \varphi(X)=\nabla_X\varphi$. The volume form is denoted as $$\vol=\sqrt{\det(g_{ij})}d x_1\wedge\ldots\wedge dx_n$$ where $(x_1,\ldots, x_n)$ is a local coordinate. For simplicity, we omit the volume form  in  integrations.
	
For $p>1$, the $p$-Laplacian operator is defined by
	$$
	\Delta_pu:=\di\left(|\nabla u|^{p-2}\nabla u\right).
	$$
The solution of $p$-Laplacian equation $\Delta_pu=0$ is the critical point of the energy functional
	$$
	E(u)=\int_M|\nabla u|^p.
	$$
	
\begin{defn}\label{def1}
A function $v$ is said to be a positive (weak) solution of equation \eqref{equa1} on a region $\Omega$ if $v\in C^1(\Omega), v>0 $ and for all $\psi\in C_0^\infty(\Omega)$, we have
\begin{align*}
-\int_\Omega|\nabla v|^{p-2}\la\nabla v,\nabla\psi\ra+\int_\Omega (bv^q+cv^r)\psi =0.
\end{align*}
\end{defn}
	
It is worth mentioning that any solution $v$ of equation (\ref{equa1}) satisfies $v\in W^{2,2}_{loc}(\Omega)$ and $v\in C^{1,\alpha}(\Omega)$ for some $\alpha\in(0,1)$(for example, see \cite{MR0709038, MR0727034,MR0474389}). Moreover, $v$ is in fact smooth away from $\{\nabla v=0\}$.
	
Next, we recall Saloff-Coste's Sobolev inequalities (see \cite[Theorem 3.1]{saloff1992uniformly}) which shall play an important role in our proof of the main theorem.
	
\begin{lem}[\cite{saloff1992uniformly}]\label{salof}
Let $(M^n,g)$ be a complete manifold with $\mathrm{Ric}_g\geq-(n-1)\kappa$ where $\kappa$ is a non-negative constant. For $n>2$, there exists a positive constant $c_0$ depending only on $n$, such that for all $B\subset M$ of radius R and volume $V$ we have for $f\in C^{\infty}_0(B)$
		$$
		\|f\|_{L^{\frac{2n}{n-2}}(B)}^2\leq e^{c_0(1+\sqrt{\kappa}R)}V^{-\frac{2}{n}}R^2\left(\int_B|\nabla f|^2+R^{-2}f^2\right).
		$$
For $n=2$, the above inequality holds if we replace $n$ by some fixed $n'>2$.
\end{lem}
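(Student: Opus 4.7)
The plan is to deduce this local Sobolev embedding from two classical ingredients --- Bishop--Gromov volume comparison and Buser's $L^2$ Poincar\'e inequality on balls --- combined via a Moser-style iteration, as originally carried out by Saloff--Coste. Since the statement is cited from \cite{saloff1992uniformly}, I only sketch the shape of the argument one would need to run.

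First, under $\mathrm{Ric}_g \geq -(n-1)\kappa g$, Bishop--Gromov comparison yields the volume doubling property with doubling constant controlled by $e^{c(n)(1+\sqrt{\kappa}R)}$ on scales up to $R$. Combined with the Buser-type Poincar\'e inequality
$$\int_B |f - f_B|^2 \leq C(n)\,e^{c(n)(1+\sqrt{\kappa}R)}\,R^2\int_B|\nabla f|^2,$$
valid on any geodesic ball $B$ of radius $\leq R$ (where $f_B$ denotes the average), one gets a scale-invariant $(2,2)$-Poincar\'e inequality on $B$, up to the $\kappa$-dependent exponential factor.

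From here I would invoke the Saloff--Coste--Grigor'yan principle that ``volume doubling $+$ scale-invariant Poincar\'e'' is equivalent to the family of local $L^{2n/(n-2)}$-Sobolev inequalities of the stated form; alternatively, one runs Moser iteration directly by applying the Poincar\'e inequality to suitable truncations of $|f|^{\alpha}$ on dyadic shells inside $B$ and summing a geometric series using the doubling property. The main obstacle is purely quantitative: one must track how constants accumulate so that only a single exponential factor $e^{c_0(1+\sqrt{\kappa}R)}$ and the sharp scaling $V^{-2/n}R^2$ survive. This is handled by choosing the iteration parameters so that logarithmic error terms form a convergent geometric series in the exponent, while the lower-order term $R^{-2}f^2$ on the right-hand side absorbs the adjustment needed to pass from $f$ to $f - f_B$ (i.e.\ the zero-mode correction). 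The $n=2$ case reduces to the $n'>2$ case by a standard dimension-raising argument, since any $\mathrm{Ric}\geq -(n-1)\kappa g$ bound on a surface also gives the analogous bound on $M\times \mathbb{T}^{n'-2}$ for the scalar Laplacian acting on functions depending only on the first two variables.
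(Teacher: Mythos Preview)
The paper does not give a proof of this lemma at all; it is simply recalled from \cite{saloff1992uniformly} (Theorem~3.1 there) and used as a black box in the subsequent Nash--Moser iteration. So there is no ``paper's own proof'' to compare against.

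Your sketch is a faithful outline of the standard Saloff--Coste argument (volume doubling from Bishop--Gromov plus a scale-invariant Poincar\'e/Buser inequality, then either the Grigor'yan--Saloff-Coste equivalence or direct Moser iteration), and is appropriate given that the lemma is only being quoted. One small caveat: your $n=2$ reduction via $M\times\mathbb{T}^{n'-2}$ is more heuristic than rigorous as stated --- the product manifold has a different Ricci lower bound and volume profile, so the constants do not transfer automatically; the actual argument in Saloff--Coste's paper handles low dimensions by observing that the doubling-plus-Poincar\'e machinery yields a Sobolev inequality with \emph{any} exponent $\frac{2n'}{n'-2}$ for $n'$ at least the doubling dimension, which for surfaces can be taken strictly larger than $2$. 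But since the paper treats the lemma as a citation, this level of detail is already more than what is required.
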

	
For any positive solution $v$ to the equation (\ref{equa1}), if we take a logarithmic transformation
$$v = e^{-\frac{u}{p-1}},$$
then, it is easy to see that $u$ satisfies the following equation
\begin{align}\label{equa2.1}
\Delta_pu-|\nabla u|^p-b_1e^{q_1u}-c_1e^{r_1u}=0,
\end{align}
where
\begin{align}\label{defofbc}
b_1 =b(p-1)^{p-1} ,\quad c_1=c(p-1)^{p-1},\quad q_1=1-\frac{q}{p-1}\quad\text{and} \quad r_1=1-\frac{r}{p-1}.
\end{align}
	
Denote $f=|\nabla u|^2$. It is clear that (\ref{equa2.1}) becomes
\begin{align}\label{equa2.3}
\Delta_pu-f^\frac{p}{2}-b_1e^{q_1u}-c_1e^{r_1u}=0.
\end{align}
Now we consider the linearization operator $\Lie$ of $p$-Laplacian:
\begin{align}\label{linearization}
\Lie(\psi)=\di\left(f^{p/2-1}A\left(\nabla \psi\right)\right),
\end{align}
where
\begin{align}\label{defofA}
A(\nabla\psi) = \nabla\psi+(p-2)f^{-1}\la\nabla \psi,\nabla u\ra\nabla u.
\end{align}

We also need to recall a useful expression of $\mathcal L(f^\alpha)$ for any $\alpha>0$ as follows:
\begin{lem}[\cite{hewangwei2024}]\label{lem2.3}
For any $\alpha >0$, the following identity holds point-wisely in $\{x : f(x) > 0\}$,
\begin{align}\label{bochner1}
\begin{split}
\mathcal{L} (f^{\alpha}) = &\alpha(\alpha+\frac{p}{2}-2)f^{\alpha+\frac{p}{2}-3}|\nabla f|^2 + 2\alpha f^{\alpha+\frac{p}{2}-2} \left(|\nabla\nabla u|^2 + \ric(\nabla u,\nabla u) \right)\\
&+\alpha(p-2)(\alpha-1)f^{\alpha+\frac{p}{2}-4}\langle\nabla f,\nabla u\rangle^2 + 2\alpha f^{\alpha-1}\langle\nabla\Delta_p u,\nabla u\rangle.
\end{split}
\end{align}
\end{lem}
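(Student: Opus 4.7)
The plan is to prove the identity by a direct computation: expand $\mathcal{L}(f^{\alpha})$ via the definition, apply Bochner's formula for $f=|\nabla u|^{2}$ to the resulting $\Delta f$ term, and then repackage the remaining cross terms using the divergence expression of $\Delta_{p}u$.

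First, since $\nabla f^{\alpha}=\alpha f^{\alpha-1}\nabla f$, the definition of $A$ in \eqref{defofA} gives
\begin{align*}
f^{\frac{p}{2}-1}A(\nabla f^{\alpha})=\alpha f^{\alpha+\frac{p}{2}-2}\nabla f+(p-2)\alpha f^{\alpha+\frac{p}{2}-3}\langle\nabla f,\nabla u\rangle\nabla u.
\end{align*}
Taking the divergence and applying the Leibniz rule produces four contributions: a term $\alpha(\alpha+\tfrac{p}{2}-2)f^{\alpha+\frac{p}{2}-3}|\nabla f|^{2}$ and an $\alpha f^{\alpha+\frac{p}{2}-2}\Delta f$ term from the first summand, together with a term $(p-2)\alpha(\alpha+\tfrac{p}{2}-3)f^{\alpha+\frac{p}{2}-4}\langle\nabla f,\nabla u\rangle^{2}$ and a term $(p-2)\alpha f^{\alpha+\frac{p}{2}-3}\bigl[\langle\nabla\langle\nabla f,\nabla u\rangle,\nabla u\rangle+\langle\nabla f,\nabla u\rangle\Delta u\bigr]$ from the second summand. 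The first two terms of the asserted identity already match except for the presence of $\Delta f$ in place of $2(|\nabla\nabla u|^{2}+\ric(\nabla u,\nabla u))$.

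The next step is to invoke the classical Bochner formula
\begin{align*}
\tfrac{1}{2}\Delta f=|\nabla\nabla u|^{2}+\langle\nabla\Delta u,\nabla u\rangle+\ric(\nabla u,\nabla u),
\end{align*}
which converts $\alpha f^{\alpha+\frac{p}{2}-2}\Delta f$ into the desired Hessian-plus-Ricci term plus an auxiliary $2\alpha f^{\alpha+\frac{p}{2}-2}\langle\nabla\Delta u,\nabla u\rangle$. Differentiating the identity $\Delta_{p}u=f^{\frac{p}{2}-1}\Delta u+(\tfrac{p}{2}-1)f^{\frac{p}{2}-2}\langle\nabla f,\nabla u\rangle$ and contracting with $\nabla u$ gives the expansion
\begin{align*}
2\alpha f^{\alpha-1}\langle\nabla\Delta_{p}u,\nabla u\rangle
&=2\alpha f^{\alpha+\frac{p}{2}-2}\langle\nabla\Delta u,\nabla u\rangle+(p-2)\alpha f^{\alpha+\frac{p}{2}-3}\langle\nabla f,\nabla u\rangle\Delta u\\
&\qquad+(p-2)\alpha f^{\alpha+\frac{p}{2}-3}\langle\nabla\langle\nabla f,\nabla u\rangle,\nabla u\rangle+(p-2)(\tfrac{p}{2}-2)\alpha f^{\alpha+\frac{p}{2}-4}\langle\nabla f,\nabla u\rangle^{2}.
\end{align*}
Substituting this identity backwards makes three groups of cross terms disappear at once: the $\langle\nabla\Delta u,\nabla u\rangle$ piece from Bochner, the $\langle\nabla f,\nabla u\rangle\Delta u$ piece, and the $\langle\nabla\langle\nabla f,\nabla u\rangle,\nabla u\rangle$ piece.

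The only verification left is that the coefficient of the residual $f^{\alpha+\frac{p}{2}-4}\langle\nabla f,\nabla u\rangle^{2}$ term equals $\alpha(p-2)(\alpha-1)$; this is the arithmetic check $(\alpha+\tfrac{p}{2}-3)-(\alpha-1)-(\tfrac{p}{2}-2)=0$, which collapses the three $\langle\nabla f,\nabla u\rangle^{2}$ contributions to the correct value. I expect this accounting — tracking the three $\langle\nabla f,\nabla u\rangle^{2}$ coefficients and the three mixed Hessian/divergence terms so that all extraneous pieces cancel — to be the main bookkeeping obstacle; everything else is Leibniz, Bochner, and direct expansion of $\Delta_{p}u$. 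The identity is pointwise wherever $f>0$ because in that region $u$ is smooth by the regularity remark preceding the lemma, so all derivatives make classical sense.
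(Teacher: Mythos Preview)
Your computation is correct: expanding the divergence, applying Bochner, and then regrouping via the identity $\Delta_{p}u=f^{p/2-1}\Delta u+(\tfrac{p}{2}-1)f^{p/2-2}\langle\nabla f,\nabla u\rangle$ is exactly the natural route, and your bookkeeping of the $\langle\nabla f,\nabla u\rangle^{2}$ coefficients checks out. The paper does not supply its own proof of this lemma---it is quoted from \cite{hewangwei2024}---so there is nothing to compare against; your direct calculation is the standard one for this type of $p$-Bochner identity.
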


\section{Proof of Main Theorem}
We divide the proof of \thmref{thm1} into three parts. In the first part, we derive a fundamental integral inequality on $f=|\nabla u|^2$, which will be used in the second and third parts. In the second part, we give an $L^{\beta}$-estimate of $f$ on a geodesic ball with radius $3R/4$, where $L^{\beta}$ norm of $f$ determines the initial state of the Nash-Moser iteration. Finally, we give a complete proof of our theorem by an intensive use of Nash-Moser iteration method.
	
\subsection{Integral inequality}\label{sect:3.1}\

In order to establish the main theorem, we need to show some lemmas on integral estimates of $f$. We first introduce the following point-wise estimate for $\Lie(f^\alpha)$.

\begin{lem}\label{lem3.1}
Let $(M^n,g)$ be an $n$-dimensional complete Riemannian manifold with Ricci curvature $\ric_g\geq -(n-1)\kappa g$ where $\kappa$ is a non-negative constant. Assume that $v$ is a positive solution to the equation (\ref{equa1}), $u = -(p-1)\log v$ and $f=|\nabla u|^2$. If one of conditions \eqref{cond1a}, \eqref{cond2a}, and \eqref{cond3a} is satisfied, then there exists constant $\alpha_0=\alpha_0(n,p,q,r)>0$ such that the following inequality
\begin{align*}
\begin{split}
\mathcal{L}(f^{\alpha_0 })\geq& 2\alpha_0 \beta_{n,p,q,r} f^{\alpha_0 +\frac{p}{2}} -2\alpha_0 (n-1)\kappa f^{\alpha_0 +\frac{p}{2}-1}- \alpha_0  a_1|\nabla f|f^{\alpha_0 +\frac{p-3}{2}}
\end{split}
\end{align*}
holds point-wisely in $\{x: f(x)>0\}$, where $a_1 = \left|p -\frac{2(p-1)}{n-1}\right|$. Here $\alpha_0$ corresponding to different $W_i$ ($i=1, 2, 3$) maybe different from each other.
\end{lem}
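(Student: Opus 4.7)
The plan is to start from the Bochner-type identity \eqref{bochner1} for $\mathcal L(f^\alpha)$ and peel off, in order, three types of contributions: the Hessian-plus-Ricci block, the $\langle\nabla\Delta_p u,\nabla u\rangle$ term (which carries the interaction with the two nonlinearities), and the gradient self-terms in $|\nabla f|^2$ and $\langle\nabla f,\nabla u\rangle^2$. At any point where $f>0$, I would fix a local orthonormal frame $\{e_1,\dots,e_n\}$ with $e_1=\nabla u/\sqrt f$, write $u_{ij}=\nabla^2 u(e_i,e_j)$, and use the identity $u_{11}=\langle\nabla f,\nabla u\rangle/(2f)$ throughout.

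First I would differentiate \eqref{equa2.3} to obtain
\begin{align*}
\langle\nabla\Delta_p u,\nabla u\rangle = \tfrac{p}{2}f^{\frac{p}{2}-1}\langle\nabla f,\nabla u\rangle + (b_1 q_1 e^{q_1 u} + c_1 r_1 e^{r_1 u})f,
\end{align*}
so that the last term of \eqref{bochner1} splits into the drift piece $\alpha p f^{\alpha+\frac{p}{2}-2}\langle\nabla f,\nabla u\rangle$ and the potential contributions $2\alpha f^\alpha(b_1 q_1 e^{q_1 u}+c_1 r_1 e^{r_1 u})$. Next, at any point where $\nabla u\neq 0$, I would invoke the refined Kato-type inequality
\begin{align*}
|\nabla\nabla u|^2 \ge u_{11}^2 + \tfrac{1}{n-1}\left(\Delta u - u_{11}\right)^2,
\end{align*}
combined with the identity $f^{\frac{p-2}{2}}(\Delta u +(p-2)u_{11}) = \Delta_p u = f^{\frac{p}{2}} + b_1 e^{q_1 u} + c_1 e^{r_1 u}$ supplied by \eqref{equa2.3}. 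Substituting this into $(\Delta u - u_{11})^2 = (f^{\frac{2-p}{2}}\Delta_p u - (p-1)u_{11})^2$ and expanding produces the desired positive leading term of order $f^{\alpha+\frac{p}{2}}$ (from the $f$-self-interaction), cross products of $e^{q_1 u}$ and $e^{r_1 u}$ with $f$, their squares, and mixed terms containing $u_{11}$. The Ricci hypothesis contributes $-2\alpha(n-1)\kappa f^{\alpha+\frac{p}{2}-1}$, which is the second term in the conclusion.

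The next step is the algebraic heart of the argument: organise the surviving $|\nabla f|^2$, $\langle\nabla f,\nabla u\rangle^2$ and $u_{11}$ pieces into a nonnegative quadratic form modulo a single linear residual. Converting Hessian-gradient mixed terms back into $\langle\nabla f,\nabla u\rangle$ via the identity for $u_{11}$, the coefficients of $|\nabla f|^2$ and $\langle\nabla f,\nabla u\rangle^2$ become rational functions of $\alpha$ that can be made simultaneously nonnegative at an explicit choice $\alpha=\alpha_0=\alpha_0(n,p,q,r)$. After a single application of Cauchy--Schwarz, the remaining gradient contribution collapses to exactly $-\alpha_0 a_1|\nabla f|f^{\alpha_0+\frac{p-3}{2}}$, with $a_1=|p-2(p-1)/(n-1)|$ emerging as the precise balance constant between the linearization of the $p$-Laplacian and the refined Kato inequality.

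Finally, I would dispose of the exponential contributions case by case. Using $q_1=1-q/(p-1)$, $r_1=1-r/(p-1)$, $b_1=b(p-1)^{p-1}$ and $c_1=c(p-1)^{p-1}$, the conditions defining $W_1$, $W_2$, $W_3$ translate directly into sign conditions on the coefficients of $e^{q_1 u}$ and $e^{r_1 u}$ that survive the previous step. Under $W_1$ both potential terms are nonnegative for the canonical $p$-harmonic exponent $\alpha_0=(n-1)/(2(p-1))$ and can simply be discarded. Under $W_2$ the hypothesis $c\le 0$ kills the $r$-contribution, while the strict two-sided bound $|q/(p-1) - (n+1)/(n-1)|<2/(n-1)$ leaves enough room to choose a perturbed $\alpha_0$ so that, after completing the square in $b_1 e^{q_1 u}$, the residual coefficient of $f^{\alpha_0+\frac{p}{2}}$ is a strictly positive number $\beta_{n,p,q,r}$; the region $W_3$ is symmetric. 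The main obstacle is the bookkeeping in the second step: one must arrange the refined Hessian bound, the $p$-Laplacian linearization coefficients, and the two potentials so that a single application of Cauchy--Schwarz leaves no $|\nabla f|^2$ or $\langle\nabla f,\nabla u\rangle^2$ in the estimate apart from the remainder with the precise constant $a_1$. Once this identity is pinned down, the sign analysis on each $W_i$ is essentially mechanical.
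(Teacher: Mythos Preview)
Your overall plan matches the paper's proof: start from \eqref{bochner1}, pick a frame with $e_1=\nabla u/\sqrt{f}$, differentiate \eqref{equa2.3} to get $\langle\nabla\Delta_p u,\nabla u\rangle$, feed the refined Kato inequality together with the equation back into the Bochner formula, complete squares to tame the exponential cross-terms, and finish with a case analysis on $W_1,W_2,W_3$. Two points in your sketch do not line up with what the paper actually does, and the second one is a genuine issue.

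First, the paper does \emph{not} organise things so that the coefficients of $|\nabla f|^2$ and $\langle\nabla f,\nabla u\rangle^2$ are separately nonnegative. Instead it converts both of these into $u_{11}^2$ via $|\nabla f|^2\ge 4fu_{11}^2$ and $\langle\nabla f,\nabla u\rangle=2fu_{11}$, obtaining a single quadratic coefficient $(2\alpha-1)(p-1)$ in front of $u_{11}^2$. This quadratic is then used to absorb the cross-term $-\tfrac{2(p-1)}{n-1}(b_1e^{q_1u}+c_1e^{r_1u})f^{1-p/2}u_{11}$ coming from the Kato expansion, leaving the residual $B_{n,p,\alpha}(b_1e^{q_1u}+c_1e^{r_1u})^2f^{2-p}$ with $B_{n,p,\alpha}=\tfrac{1}{n-1}-\tfrac{p-1}{(2\alpha-1)(n-1)}$. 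In particular the relevant $\alpha_0$ for $W_1$ is \emph{any} $\alpha$ large enough that $B_{n,p,\alpha}>0$ (and $\alpha\ge 3/2$); the value $(n-1)/(2(p-1))$ plays no role and can in fact violate $B_{n,p,\alpha}>0$.

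Second, and more seriously, for $W_2$ the hypothesis $c\le 0$ does \emph{not} ``kill the $r$-contribution''. After the completion above you are left with the linear term $b_1(q_1+\tfrac{2}{n-1})e^{q_1u}+c_1(r_1+\tfrac{2}{n-1})e^{r_1u}$, and the sign of $r_1+\tfrac{2}{n-1}$ is undetermined in $W_2$. The paper's trick is to rewrite this as $(q_1+\tfrac{2}{n-1})(b_1e^{q_1u}+c_1e^{r_1u})+c_1(r_1-q_1)e^{r_1u}$; the last piece is nonnegative because $c\le 0$ \emph{and} $q\le r$, and can be dropped. One then completes the square between $B_{n,p,\alpha}(b_1e^{q_1u}+c_1e^{r_1u})^2f^{2-p}$ and $(q_1+\tfrac{2}{n-1})(b_1e^{q_1u}+c_1e^{r_1u})f^{2-p/2}$ --- in the full combination, not just $b_1e^{q_1u}$ --- yielding the loss $\tfrac{1}{4B_{n,p,\alpha}}(q_1+\tfrac{2}{n-1})^2f^2$, which the strict bound on $q/(p-1)$ makes strictly smaller than $f^2/(n-1)$ for $\alpha$ large. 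The region $W_3$ is handled symmetrically with the roles of $(b,q)$ and $(c,r)$ swapped.
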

	
\begin{proof}
Let $\{e_1,e_2,\ldots, e_n\}$ be  an orthonormal frame of $TM$ on a domain with $f\neq 0$ such that $e_1=\frac{\nabla u}{|\nabla u|}$. We have $u_1 = f^{1/2}$. Direct computation shows
\begin{align}\label{equ2.8}
u_{11} = \frac{1}{2}f^{-1/2}f_1 = \frac{1}{2}f^{-1}\la\nabla u,\nabla f\ra,
\end{align}
and
\begin{align}\label{equa2.9}
|\nabla f|^2/f=4\sum_{i=1}^n u_{1i}^2 \geq 4u^2_{11}.
\end{align}
It follows from (\ref{equa2.3}) and (\ref{equ2.8}) that
\begin{align}\label{equ2.10}
\begin{split}
				\langle\nabla\Delta_p u,\nabla u\rangle=
				&\ \langle\nabla u, \nabla(f^\frac{p}{2}+ b_1e^{q_1u}+c_1e^{r_1u}) \rangle\\
				=&\ pf^{\frac{p}{2} }u_{11}+ \left(b_1q_1e^{q_1u}+c_1r_1e^{r_1u}\right)f.
\end{split}
\end{align}
By Cauchy inequality, we arrive at
\begin{align}\label{equ2.11}
|\nabla\nabla u|^2 \geq& u_{11}^2 + \sum_{i=2}u_{ii}^2 \geq u_{11}^2 +\frac{1}{n-1}\left(\sum_{i=2}u_{ii}\right)^2.
\end{align}
		
From now on, we always assume that $\alpha\geq 3/2$, which can guarantee $\alpha+p/2-1>0$. Substituting (\ref{equ2.8}), (\ref{equa2.9}), (\ref{equ2.10}) and (\ref{equ2.11}) into (\ref{bochner1}) and collecting terms about $u_{11}^2$, we obtained
\begin{align}\label{equa3.5}
\begin{split}
\frac{f^{2-\alpha-\frac{p}{2}}}{2\alpha}\mathcal{L} (f^{\alpha}) \geq &\ (2\alpha-1)(p-1) u_{11}^2 + \ric(\nabla u,\nabla u)+\frac{1}{n-1}\left(\sum_{i=2}u_{ii}\right)^2\\
&+ pf u_{11}+\left(b_1q_1e^{q_1u}+c_1r_1e^{r_1u}\right)f^{2-\frac{p}{2}}.
\end{split}
\end{align}
If we express the $p$-Lapalcian in terms of $f$, we have
\begin{align}\label{equ:2.12}
\Delta_p u =&f^{\frac{p}{2}-1}\left((p-1)u_{11}+\sum_{i=2}^nu_{ii}\right).
\end{align}
Substituting (\ref{equ:2.12}) into equation (\ref{equa2.3}), we obtain
\begin{align}\label{eq:2.13}
(p-1)u_{11}+\sum_{i=2}^nu_{ii}^2 = f + (b_1e^{q_1u}+c_1e^{r_1u})f^{1-\frac{p}{2}}.
\end{align}
It follows from (\ref{eq:2.13}) that
\begin{align}\label{equ:2.13}
\begin{split}
\frac{1}{n-1}\left(\sum_{i=2}u_{ii}\right)^2=&\frac{1}{n-1}\left(f+(b_1e^{q_1u}+c_1e^{r_1u})f^{1-\frac{p}{2}} -(p-1)u_{11}\right)^2\\
=&\frac{1}{n-1}\Big(f^2+(b_1e^{q_1u}+c_1e^{r_1u})^2f^{2-p}+(p-1)^2u^2_{11} \\
&+2(b_1e^{q_1u}+c_1e^{r_1u})f^{2-\frac{p}{2}} -2(p-1)fu_{11} \\
&-2(p-1)(b_1e^{q_1u}+c_1e^{r_1u})f^{1-\frac{p}{2}}u_{11}\Big).
\end{split}
\end{align}
Hence, combining (\ref{equa3.5}) and (\ref{equ:2.13}) together leads to
\begin{align}\label{equa2.15}
\begin{split}
\frac{f^{2-\alpha-\frac{p}{2}}}{2\alpha}\mathcal{L} (f^{\alpha})\geq & (2\alpha-1)(p-1) u_{11}^2 + \frac{f^2}{n-1}+\ric(\nabla u,\nabla u)			
+\left(p -\frac{2(p-1)}{n-1}\right)f u_{11}\\
&+\frac{(b_1e^{q_1u}+c_1e^{r_1u})^2f^{2-p}}{n-1}-\frac{2(p-1)(b_1e^{q_1u}+c_1e^{r_1u}) }{n-1}f^{1-\frac{p}{2}} u_{11} \\
& +b_1\left(q_1+\frac{2}{n-1}\right)e^{q_1u}f^{2-\frac{p}{2}}+c_1\left(r_1+\frac{2}{n-1}\right)e^{r_1u}f^{2-\frac{p}{2}}.
\end{split}
\end{align}
Making use of the fact $a^2+2ab\geq -b^2$, we have
\begin{align}\label{equa2.16}
\begin{split}
& (2\alpha-1)(p-1)  u_{11}^2-\frac{2(p-1) (b_1e^{q_1u}+c_1e^{r_1u}) }{n-1}f^{1-\frac{p}{2}} u_{11}\\
\geq &- \frac{(p-1) \left(b_1e^{q_1u}+c_1e^{r_1u}\right)^2}{  (2\alpha-1)(n-1)}f^{2-p}.
\end{split}
\end{align}
Substituting (\ref{equa2.16}) into (\ref{equa2.15}) yields
\begin{align}\label{equa:3.11}
\begin{split}
\frac{f^{2-\alpha-\frac{p}{2}}}{2\alpha}\mathcal{L} (f^{\alpha}) \geq&\
\frac{f^2}{n-1} + B_{n,p,\alpha}\left(b_1e^{q_1u}+c_1e^{r_1u}\right)^2f^{2-p} + \ric(\nabla u,\nabla u)	-\frac{a_1}{2}f^{\frac{1}{2}} |\nabla f|\\
& + b_1\left(q_1+\frac{2}{n-1}\right)e^{q_1u}f^{2-\frac{p}{2}} + c_1\left(r_1+\frac{2}{n-1}\right)e^{r_1u}f^{2-\frac{p}{2}},
\end{split}
\end{align}
where we have used \eqref{equ2.8} to estimate $fu_{11}$ and the constants $a_1$ and $B_{n,p,\alpha}$ are defined respectively by
\begin{align*}
a_1=\left|p -\frac{2(p-1)}{n-1}\right|
\end{align*}
and
\begin{align}\label{equa:3.12}
B_{n,p,\alpha} = \frac{1}{n-1}-\frac{p-1 }{  (2\alpha-1)(n-1) }.
\end{align}
For any $n>1$ and $p>1$, we can always choose $\alpha$ large enough such that $B_{n,p,\alpha}$ is positive.
		
\textbf{Case 1.}
		$$
		 b\left(\frac{n+1}{n-1}-\frac{q}{p-1}\right)\geq 0 \quad \text{and}\quad c\left(\frac{n+1}{n-1}-\frac{r}{p-1}\right)\geq 0.
		$$
For this case, by the definition of $b_1$ and $c_1$ in \eqref{defofbc} we have
$$
b_1\left(q_1+\frac{2}{n-1}\right) \geq 0\quad\text{and}\quad c_1\left(r_1+\frac{2}{n-1}\right) \geq 0.
$$
Hence,
\begin{align*}
\frac{f^{2-\alpha-\frac{p}{2}}}{2\alpha}\mathcal{L} (f^{\alpha}) \geq&\ \frac{f^2}{n-1} + B_{n,p,\alpha}\left(b_1e^{q_1u}+c_1e^{r_1u}\right)^2f^{2-p}\\
&+ \ric(\nabla u,\nabla u)-\frac{a_1}{2}f^{\frac{1}{2}} |\nabla f|.
\end{align*}
From \eqref{equa:3.12}, we can see
		$$
		\lim_{\alpha\to\infty}B_{n,p,\alpha}=\frac{1}{n-1}>0.
		$$
So there exists a constant $\alpha_1=\alpha_1(n,p)\geq 3/2$ such that, for any  $\alpha\geq \alpha_1$, there hold $B_{n,p,\alpha}>0$ and
		\begin{align}\label{equa:3.13a}
			\frac{f^{2-\alpha-\frac{p}{2}}}{2\alpha}
			\mathcal{L} (f^{\alpha}) \geq&
			\frac{f^2}{n-1}
			+ \ric(\nabla u,\nabla u)
			-\frac{a_1}{2}f^{\frac{1}{2}} |\nabla f|.
		\end{align}

\textbf{Case 2.}
$$
	c\leq 0 \quad\text{and}\quad \left|\frac{q}{p-1}-\frac{n+1}{n-1}\right|< \frac{2}{n-1}
$$
In the present situation, we can conclude from (\ref{equa:3.11}) that
\begin{align*}
\frac{f^{2-\alpha-\frac{p}{2}}}{2\alpha}\mathcal{L} (f^{\alpha}) \geq&\
B_{n,p,\alpha}\left(b_1e^{q_1u}+c_1e^{r_1u}\right)^2f^{2-p} + \left(q_1+\frac{2}{n-1}\right)\left(b_1e^{q_1u}+c_1e^{r_1u}\right)f^{2-\frac{p}{2}}\\
& +\frac{f^2}{n-1} + \ric(\nabla u,\nabla u) -\frac{a_1}{2}f^{\frac{1}{2}} |\nabla f| +c_1\left(r_1-q_1\right)e^{r_1u}f^{2-\frac{p}{2}}.
\end{align*}
The conditions $q\leq r$ and $c\leq 0$ imply
\begin{align*}
c_1\left(r_1-q_1\right)e^{r_1u}f^{2-\frac{p}{2}}=c(p-1)^{p-1}\left(q-r\right)e^{r_1u}f^{2-\frac{p}{2}}\geq0.
\end{align*}
By the basic inequality $A^2+2AB\geq -B^2$, we can conclude
\begin{align*}
&B_{n,p, \alpha}\left(b_1e^{q_1u}+c_1e^{r_1u}\right)^2f^{2-p} + \left(q_1+\frac{2}{n-1}\right)\left(b_1e^{q_1u}+c_1e^{r_1u}\right)f^{2-\frac{p}{2}}\\
\geq &-\frac{1}{4B_{n,p, \alpha}}\left(q_1+\frac{2}{n-1}\right)^2f^2.
\end{align*}
It follows
\begin{align*}
\frac{f^{2-\alpha-\frac{p}{2}}}{2\alpha}\mathcal{L} (f^{\alpha})
\geq&\ \beta_{n,p,q, \alpha} f^2-(n-1)\kappa f-\frac{a_1}{2}f^{\frac{1}{2}} |\nabla f|,
\end{align*}
where
$$
\beta_{n,p,q,\alpha}:=\frac{1}{n-1}-\frac{1}{4B_{n,p, \alpha}}\left(q_1+\frac{2}{n-1}\right)^2 .
$$
By the conditions given for this case, we know that
$$
\lim_{\alpha\to\infty}\left\{\frac{1}{n-1}-\frac{1}{4B_{n,p,q,\alpha}}\left(q_1+\frac{2}{n-1}\right)^2\right\}
=\frac{1}{n-1}-\frac{(n-1)}{4}\left(\frac{q}{p-1}-\frac{n+1}{n-1}\right)^2>0.
$$
So, there exists a constant $\alpha_2>0$ such that, for any  $\alpha\geq \alpha_2$, there hold $\beta_{n,p,q, \alpha}>0$ and
\begin{align}\label{equa:3.14a}
	\frac{f^{2-\alpha-\frac{p}{2}}}{2\alpha}
	\mathcal{L} (f^{\alpha}) \geq&
	\beta_{n,p,q, \alpha}f^2
	+ \ric(\nabla u,\nabla u)
	-\frac{a_1}{2}f^{\frac{1}{2}} |\nabla f|.
\end{align}

\textbf{Case 3.}
$$
b\geq 0\quad \text{and}\quad \left|\frac{r}{p-1}-\frac{n+1}{n-1}\right|<\frac{2}{n-1}
$$
For this case, we can conclude from (\ref{equa:3.11}) that
\begin{align*}
	\frac{f^{2-\alpha-\frac{p}{2}}}{2\alpha}
	\mathcal{L} (f^{\alpha}) \geq&\
	B_{n,p, \alpha}\left(b_1e^{q_1u}+c_1e^{r_1u}\right)^2f^{2-p}
	+ \left(r_1+\frac{2}{n-1}\right)\left(b_1e^{q_1u}+c_1e^{r_1u}\right)f^{2-\frac{p}{2}}
	\\
	&
	+\frac{f^2}{n-1}
	+ \ric(\nabla u,\nabla u)				
	-\frac{a_1}{2}f^{\frac{1}{2}} |\nabla f|
	+b_1\left(q_1-r_1\right)e^{r_1u}f^{2-\frac{p}{2}}.
\end{align*}
The basic conditions $q\leq r$ and $b\geq 0$ imply
\begin{align*}
b_1\left(q_1-r_1\right)e^{r_1u}f^{2-\frac{p}{2}}=b(p-1)^{p-1}\left(r-q\right)e^{r_1u}f^{2-\frac{p}{2}}\geq0.
\end{align*}
By a similar procedure as in Case 2, we can conclude
\begin{align*}
	&B_{n,p, \alpha}\left(b_1e^{q_1u}+c_1e^{r_1u}\right)^2f^{2-p}
	+ \left(r_1+\frac{2}{n-1}\right)\left(b_1e^{q_1u}+c_1e^{r_1u}\right)f^{2-\frac{p}{2}}\\
	\geq &-\frac{1}{4B_{n,p, \alpha}}\left(r_1+\frac{2}{n-1}\right)^2f^2.
\end{align*}
Now we use the two conditions in this case to obtain
\begin{align*}
		\frac{f^{2-\alpha-\frac{p}{2}}}{2\alpha}
		\mathcal{L} (f^{\alpha})
		\geq&\
		\beta_{n,p, r,\alpha} f^2
		-(n-1)\kappa f
		-\frac{a_1}{2}f^{\frac{1}{2}} |\nabla f|,
\end{align*}
where
$$
\beta_{n,p, r,\alpha}:=\frac{1}{n-1}-\frac{1}{4B_{n,p, \alpha}}\left(r_1+\frac{2}{n-1}\right)^2.
$$
Similar to Case 2, there exists a large $\alpha_3>0$ such that, for any  $\alpha\geq \alpha_3$, there hold $\beta_{n,p, r,\alpha}>0$ and
\begin{align}\label{equa:3.15a}
	\frac{f^{2-\alpha-\frac{p}{2}}}{2\alpha}
	\mathcal{L} (f^{\alpha}) \geq&
	\beta_{n,p, r,\alpha}f^2
	+ \ric(\nabla u,\nabla u)
	-\frac{a_1}{2}f^{\frac{1}{2}} |\nabla f|.
\end{align}

From now on, we denote
		$$
		\alpha_0 =
		\begin{cases}
		 \alpha_1, \quad (b, c, q, r)\in W_1;\\
		 \alpha_2, \quad (b, c, q, r)\in W_2\setminus W_1;\\
		 \alpha_3, \quad (b, c, q, r)\in W_3\setminus (W_1\cup W_2),
		\end{cases}
		$$
and
		$$
		\beta_{n,p,q,r} =
		\begin{cases}
			\frac{1}{n-1}, &\quad (b, c, q, r)\in W_1;\\
			\beta_{n,p,q,\alpha_2}, &\quad (b, c, q, r)\in W_2\setminus W_1;\\
			\beta_{n,p,r,\alpha_3}, &\quad (b, c, q, r)\in W_3\setminus  W_1 .
		\end{cases}
		$$
We summarize the above four cases to obtain the following point-wise estimate
\begin{align}\label{equa:3.15}
\begin{split}
\mathcal{L} (f^{\alpha_0 })\geq& 2\alpha_0 \beta_{n,p,q,r} f^{\alpha_0 +\frac{p}{2} }-2\alpha_0 (n-1)\kappa  f^{\alpha_0 +\frac{p}{2}-1}
- \alpha_0  a_1   |\nabla f|f^{\alpha_0 +\frac{p-3}{2}}.
\end{split}
\end{align}
Thus, we complete the proof of this lemma.
\end{proof}
	
\begin{lem}\label{lem3.2}
Let $(M^n,g)$ be an $n$-dim complete Riemannian manifold with $\ric_g\geq -(n-1)\kappa g$ where $\kappa$ is a non-negative constant and $\Omega = B_R(o)\subset M$ be a geodesic ball. Assume that $v$ is a positive solution to the equation (\ref{equa1}), $u = -(p-1)\ln v$ and $f=|\nabla u|^2$. If one of conditions \eqref{cond1a}, \eqref{cond2a}, and \eqref{cond3a} is satisfied, then there exist constants $\beta_{n,p,q,r}$, $a_3,\, a_4$ and $a_5$, which depend only on $n$, $p,\,q$, and $r$, such that for any $t\geq t_0 $ where $t_0$ is defined in \eqref{equa2.8} there holds true
\begin{align*}
\begin{split}
& \beta_{n,p,q,r}\int_\Omega f^{\alpha_0+\frac{p}{2}+t}\eta^2 + \frac{a_3}{ t }e^{-t_0}V^{\frac{2}{n}}R^{-2}\left\|f^{\frac{\alpha_0+t-1}{2}+\frac{p}{4} }\eta\right\|_{L^{\frac{2n}{n-2}}}^2\\
\leq\  & a_5t_0^2R^{-2} \int_\Omega f^{\alpha_0+\frac{p}{2}+t-1}\eta^2+\frac{a_4}{t }\int_\Omega f^{\alpha_0+\frac{p}{2}+t-1}|\nabla\eta|^2,
\end{split}
\end{align*}
where $\eta\in C^{\infty}_0(\Omega,\R)$.
\end{lem}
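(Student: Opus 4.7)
The plan is to multiply the pointwise estimate of \lemref{lem3.1} by the non-negative weight $f^{t}\eta^{2}$, integrate by parts to exploit the divergence form of $\mathcal{L}$, and finally invoke Saloff-Coste's Sobolev inequality (\lemref{salof}). After integrating over $\Omega$, the divergence theorem rewrites the left-hand side as
$$\int_\Omega\mathcal{L}(f^{\alpha_0})f^t\eta^2 = -\int_\Omega f^{p/2-1}\la A(\nabla f^{\alpha_0}),\nabla(f^t\eta^2)\ra.$$
Expanding $\nabla(f^t\eta^2)=tf^{t-1}\eta^2\nabla f+2f^t\eta\nabla\eta$ and using the explicit form of $A$ from \eqref{defofA}, the diagonal pairing $\la A(\nabla f^{\alpha_0}),\nabla f\ra$ is bounded below by $\min\{1,p-1\}\,\alpha_0 f^{\alpha_0-1}|\nabla f|^2$ via the Cauchy-Schwarz bound $\la\nabla f,\nabla u\ra^2\leq f|\nabla f|^2$. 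This produces a genuine good gradient term, roughly $c_0\, t\int f^{\alpha_0+p/2+t-3}|\nabla f|^2\eta^2$, on the left-hand side.

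The cross-term involving $\nabla\eta$ and the bad gradient term $\alpha_0 a_1|\nabla f|f^{\alpha_0+(p-3)/2}$ coming from \lemref{lem3.1} are then dispatched by Young's inequality with a weight chosen proportional to $\alpha_0 t$, so that each of them splits into (a) a small fraction of the diagonal gradient term $f^{\alpha_0+p/2+t-3}|\nabla f|^2\eta^2$ (which is reabsorbed on the left) and (b) a zeroth-order piece of size $O(1/t)\cdot f^{\alpha_0+p/2+t}\eta^2$ or $O(1/t)\cdot f^{\alpha_0+p/2+t-1}|\nabla\eta|^2$. Requiring $t\geq t_0$ for a sufficiently large $t_0$ (as in the statement), the $O(1/t) f^{\alpha_0+p/2+t}\eta^2$ spillover is in turn absorbed into the positive $2\alpha_0\beta_{n,p,q,r}\int f^{\alpha_0+p/2+t}\eta^2$ term produced by \lemref{lem3.1}. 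The outcome of this bookkeeping is an intermediate estimate of the form
$$c_1 t\int_\Omega f^{\alpha_0+p/2+t-3}|\nabla f|^2\eta^2+\alpha_0\beta_{n,p,q,r}\int_\Omega f^{\alpha_0+p/2+t}\eta^2\leq c_2\kappa\int_\Omega f^{\alpha_0+p/2+t-1}\eta^2+\frac{c_3}{t}\int_\Omega f^{\alpha_0+p/2+t-1}|\nabla\eta|^2.$$

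To finish, I would set $g=f^{(\alpha_0+t-1)/2+p/4}\eta$ and use the chain rule and Young's inequality to get $|\nabla g|^2\leq Ct^2 f^{\alpha_0+p/2+t-3}|\nabla f|^2\eta^2+2f^{\alpha_0+p/2+t-1}|\nabla\eta|^2$. Hence the gradient term already retained on the LHS of the intermediate estimate controls $\frac{1}{t}\int|\nabla g|^2$, which, together with the obvious $\frac{1}{t R^2}\int g^2$ bound and Saloff-Coste's inequality $\|g\|_{L^{2n/(n-2)}}^2\leq e^{c_0(1+\sqrt{\kappa}R)}V^{-2/n}R^2\bigl(\int|\nabla g|^2+R^{-2}\int g^2\bigr)$, converts the LHS gradient piece into the claimed $L^{2n/(n-2)}$-term, the prefactor $e^{-t_0}V^{2/n}R^{-2}/t$ coming as the reciprocal of the Sobolev constant provided $t_0\geq c_0(1+\sqrt{\kappa}R)$. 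The same choice of $t_0$ also yields $\kappa\leq t_0^2/R^2$, so that $c_2\kappa\int f^{\alpha_0+p/2+t-1}\eta^2$ becomes $a_5 t_0^2 R^{-2}\int f^{\alpha_0+p/2+t-1}\eta^2$, and retaining $\alpha_0\beta_{n,p,q,r}\int f^{\alpha_0+p/2+t}\eta^2$ on the left gives the remaining LHS term. The main obstacle is precisely the bookkeeping of the Young-inequality weights and the powers of $t$: one has to guarantee that the $O(1/t)$ spillover is strictly dominated by $\alpha_0\beta_{n,p,q,r}$, which is exactly what the hypothesis $t\geq t_0$ is designed for, with $t_0$ large enough to beat both the absorption constants and $\sqrt{\kappa}R$.
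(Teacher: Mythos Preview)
Your approach is essentially identical to the paper's proof: the same test function $f^{t}\eta^{2}$, the same lower bound $\la A(\nabla f),\nabla f\ra\geq \min\{1,p-1\}|\nabla f|^{2}$, the same Young-inequality absorptions of the $a_1|\nabla f|$ and $\nabla\eta$ cross-terms into the diagonal gradient term, and the same use of Saloff-Coste to convert $\int|\nabla g|^{2}$ into the $L^{2n/(n-2)}$ norm. The one technical detail the paper adds is that it first takes $\psi=f_{\epsilon}^{t}\eta^{2}$ with $f_{\epsilon}=(f-\epsilon)^{+}$ and only afterwards lets $\epsilon\to 0$, so that the integration by parts against $\mathcal{L}(f^{\alpha_0})$ is performed on the region $\{f>\epsilon\}$ where $u$ is smooth; you should include this regularization step, since the pointwise inequality of \lemref{lem3.1} is only asserted on $\{f>0\}$.
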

	
\begin{proof}
By the regularity theory on elliptic equations, $u$ is smooth away from $\{f=0\}$. So the both sides of \eqref{equa:3.15} are in fact smooth. Let $\epsilon>0$ and $\psi = f_\epsilon^{\alpha}\eta^2 $, where $f_\epsilon = (f-\epsilon)^+$, $\eta\in C^{\infty}_0(B_R(o))$ is non-negative and $\alpha>1$ which will be determined later. Now, we multiply the both sides of \eqref{equa:3.15} by $\psi$, integrate then the obtained inequality over $\Omega$ and take a direct computation to get
\begin{align}\label{equa:3.17}
\begin{split}
				&-\int_\Omega \alpha_0 tf^{\alpha_0+\frac{p}{2}-2}f_{\epsilon}^{t-1}|\nabla f|^2\eta^2
				+
				t\alpha_0(p-2)f^{\alpha_0+\frac{p}{2}-3}f_{\epsilon}^{t-1}\la\nabla f,\nabla u\ra^2\eta^2
				\\
				&-\int_\Omega2\eta\alpha_0 f^{\alpha+\frac{p}{2} -2}f_{\epsilon}^t\la\nabla f,\nabla\eta\ra+2\alpha_0\eta(p-2)f^{\alpha+\frac{p}{2}-3}f_{\epsilon}^t\la\nabla f,\nabla u\ra\la\nabla u,  \nabla\eta\ra
				\\
				\geq &
				2\beta_{n,p,q,r}\alpha_0 \int_\Omega f^{\alpha_0+\frac{p}{2}}f_{\epsilon}^t\eta^2  -2(n-1)\alpha_0\kappa\int_\Omega f^{\alpha_0+\frac{p}{2}-1}f_{\epsilon}^t\eta^2
				- a_1\alpha_0\int_\Omega f^{\alpha_0+\frac{p-3}{2}}f_{\epsilon}^t|\nabla f|\eta^2.
\end{split}
\end{align}
It is easy to see that
		\begin{align}\label{2.24}
			f_{\epsilon}^{t-1}|\nabla f|^2 +(p-2)f_{\epsilon}^{t-1}f^{-1}\la\nabla f,\nabla u\ra^2\geq a_2 f_{\epsilon}^{t-1}|\nabla f|^2,
		\end{align}
where $a_2 = \min\{1, p-1\}$ and
\begin{align}\label{2.25}
f_{\epsilon}^t\la\nabla f,\nabla\eta\ra+ (p-2)f_{\epsilon}^tf^{-1}\la\nabla f,\nabla u\ra\la\nabla u,  \nabla\eta\ra\geq -(p+1)f_{\epsilon}^t |\nabla f||\nabla\eta|.
\end{align}
Substituting \eqref{2.24}) and \eqref{2.25} into \eqref{equa:3.17} and letting $\epsilon\to 0$, we obtain
		\begin{align}
			\label{2.26}
			\begin{split}
				&2\beta_{n,p,q,r}\int_\Omega f^{\alpha_0+\frac{p}{2}+t}\eta^2
				+
				\int_\Omega a_2  tf^{\alpha_0+\frac{p}{2}+t-3}|\nabla f|^2\eta^2\\
				\leq &~
				2(n-1) \kappa\int_\Omega f^{\alpha_0+\frac{p}{2}+t-1}\eta^2
				+ a_1 \int_\Omega f^{\alpha_0+\frac{p-3}{2}+t }|\nabla f|\eta^2
				\\
				&
				+2 (p+1)\int_\Omega  f^{\alpha_0+\frac{p}{2}+t-2}|\nabla f||\nabla\eta|\eta,
		\end{split}\end{align}
where we have divided the both sides of the inequality by $\alpha_0$.
		
Using Cauchy's inequality, we have
		\begin{align}\label{2.27}
			\begin{split}
				a_1 f^{\alpha_0+\frac{p-3}{2}+t }|\nabla f|\eta^2\leq &\
				\frac{a_2t}{4}    f^{\alpha_0+\frac{p}{2}+t-3}|\nabla f|^2\eta^2
				+\frac{ a_1^2}{a_2t} f^{\alpha_0+\frac{p}{2}+t}\eta^2,\\
			\end{split}
		\end{align}
and
		\begin{align}\label{2.27*}
			\begin{split}
				2(p+1)f^{\alpha_0+\frac{p}{2}+t-2}|\nabla f||\nabla\eta|\eta\leq &\
				\frac{a_2t}{4}    f^{\alpha_0+\frac{p}{2}+t-3}|\nabla f|^2\eta^2
				+\frac{4(p+1)^2 }{a_2t} f^{\alpha_0+\frac{p}{2}+t-1}|\nabla \eta|^2.
			\end{split}
		\end{align}
Now we choose $t$ large enough such that
		\begin{align}\label{2.28}
			\frac{a_1^2}{a_2t}\leq  \beta_{n,p,q,r}.
		\end{align}
It follows from (\ref{2.26}), (\ref{2.27}), \eqref{2.27*} and (\ref{2.28}) that
		\begin{align}
			\label{2.29}
			\begin{split}
				& \beta_{n,p,q,r}\int_\Omega f^{\alpha_0+\frac{p}{2}+t}\eta^2
				+
				\frac{a_2  t}{2}\int_\Omega f^{\alpha_0+\frac{p}{2}+t-3}|\nabla f|^2\eta^2 \\
				\leq  &\
				2(n-1) \kappa\int_\Omega f^{\alpha_0+\frac{p}{2}+t-1}\eta^2
				+\frac{4(p+1)^2 }{a_2t}  \int_\Omega f^{\alpha_0+\frac{p}{2}+t-1}|\nabla \eta|^2.
			\end{split}
		\end{align}

On the other hand, we have
\begin{align}\label{2.30}
\begin{split}
\left|\nabla \left(f^{\frac{\alpha_0+t-1}{2}+\frac{p}{4} }\eta \right)\right|^2\leq &\ 2\left|\nabla f^{\frac{\alpha_0+t-1}{2}+\frac{p}{4} }\right|^2\eta^2 +2f^{\alpha_0+t-1+\frac{p}{2}}|\nabla\eta|^2\\
=&\ \frac{(\alpha_0+t+\frac{p}{2}-1)^2}{2}f^{\alpha_0+t+\frac{p}{2}-3}|\nabla f |^2\eta^2 +2f^{\alpha_0+t-1+\frac{p}{2}}|\nabla\eta|^2  .
\end{split}
\end{align}
Now, substituting \eqref{2.30} into \eqref{2.29} leads to
\begin{align}\label{2.31}
\begin{split}
& \beta_{n,p,q,r} \int_\Omega f^{\alpha_0+\frac{p}{2}+t}\eta^2 + \frac{4a_2t}{(2\alpha_0+2t+p-2)^2}\int_\Omega \left|\nabla \left(f^{\frac{\alpha_0+t-1}{2}+\frac{p}{4} }\eta\right)\right|^2 \\
\leq  &\ 2(n-1)\kappa  \int_\Omega f^{\alpha_0+t+\frac{p}{2}-1}\eta^2 + \frac{4(p+1)^2 }{a_2t} \int_\Omega f^{\alpha_0+\frac{p}{2}+t-1}|\nabla\eta|^2\\
&\ +\frac{8a_2t}{(2\alpha_0+2t+p-2)^2}\int_\Omega f^{\alpha_0+t+\frac{p}{2}-1}|\nabla\eta|^2 .
\end{split}
\end{align}

From now on we use $a_1,\, a_2,\, a_3\, \cdots$ to denote constants depending only on $n,\, p,\, q$ and $r$. We choose $a_3$ and $a_4$ such that
\begin{align}\label{2.32}
\frac{a_3}{t}\leq  \frac{4a_2t}{(2\alpha_0+2t+p-2)^2}\quad\text{and}\quad\frac{8a_2t}{(2\alpha_0+2t+p-2)^2}+\frac{4(p+1)^2}{a_2t} \leq\frac{a_4}{t},
\end{align}
since $t$ satisfies (\ref{2.28}). From (\ref{2.31}) and (\ref{2.32}) we can obtain that
\begin{align}\label{2.33}
\begin{split}
& \beta_{n,p,q,r}\int_\Omega f^{\alpha_0+\frac{p}{2}+t}\eta^2 + \frac{a_3}{t}\int_\Omega   \left|\nabla \left(f^{\frac{\alpha_0+t-1}{2}
+\frac{p}{4} }\eta\right)\right|^2 \\
\leq  &\ 2(n-1)\kappa  \int_\Omega f^{\alpha_0+t+\frac{p}{2}-1}\eta^2 + \frac{a_4 }{t} \int_\Omega f^{\alpha_0+\frac{p}{2}+t-1}\left|\nabla\eta\right|^2.
\end{split}
\end{align}
Noting that Saloff-Coste's Sobolev inequality implies
$$e^{-c_0(1+\sqrt{\kappa}R)}V^{\frac{2}{n}}R^{-2}\left\|f^{\frac{\alpha_0+t-1}{2}+\frac{p}{4} }\eta\right\|_{L^{\frac{2n}{n-2}}(\Omega)}^2\leq \int_{\Omega}\left|\nabla \left(f^{\frac{\alpha_0+t-1}{2}+\frac{p}{4} }\eta\right)\right|^2+R^{-2}\int_\Omega f^{ \alpha_0+t +\frac{p}{2} -1 }\eta ^2,$$
furthermore, we can infer from \eqref{2.33} that
\begin{align}\label{3.32}
\begin{split}
& \beta_{n,p,q,r}\int_\Omega f^{\alpha_0+\frac{p}{2}+t}\eta^2 +
\frac{a_3}{t }e^{-c_0(1+\sqrt{\kappa}R)}V^{\frac{2}{n}}R^{-2}\left\|f^{\frac{\alpha_0+t-1}{2}+\frac{p}{4} }\eta\right\|_{L^{\frac{2n}{n-2}}}^2\\
\leq  &\ 2(n-1)\kappa  \int_\Omega f^{\alpha_0+t+\frac{p}{2}-1}\eta^2+\frac{a_4}{ t }\int_\Omega f^{\alpha_0+t+\frac{p}{2}-1}|\nabla\eta|^2
+\frac{a_3}{ t }\int_\Omega R^{-2}f^{ \alpha_0 +\frac{p}{2}+t-1 }\eta ^2.
\end{split}
\end{align}
		
Now we choose
\begin{align}\label{equa2.8}
t_0 = c_{n,p,q,r}\left(1+\sqrt{\kappa} R\right) \quad\text{and}\quad c_{n,p,q,r}=\max\left\{c_0+1, \frac{a_1^2}{a_2\beta_{n,p,q,r}} \right\},
\end{align}
and pick $t$ such that $t\geq t_0$. Since
\begin{align*}
2(n-1)\kappa  R^2\leq\frac{ 2(n-1)}{c_{n,p,q,r}^2}t_0^2\quad \text{ and}\quad \frac{a_3}{t}\leq \frac{a_3}{c_{n,p,q,r}},
\end{align*}
there exists $a_5 = a_5(n,p,q,r)>0$ such that
\begin{align}\label{a5}
2(n-1)\kappa  R^2+\frac{a_3}{t}\leq a_5t_0^2 = a_5c^2_{n,p,q,r}\left(1+\sqrt{\kappa} R\right)^2.
\end{align}
Immediately, it follows from \eqref{3.32} and \eqref{a5} that
\begin{align}\label{2.34}
\begin{split}
& \beta_{n,p,q,r}\int_\Omega f^{\alpha_0+\frac{p}{2}+t}\eta^2 + \frac{a_3}{ t }e^{-t_0}V^{\frac{2}{n}}R^{-2}\left\|f^{\frac{\alpha_0+t-1}{2}+\frac{p}{4} }\eta\right\|_{L^{\frac{2n}{n-2}}}^2\\
\leq\  & a_5t_0^2R^{-2} \int_\Omega f^{\alpha_0+\frac{p}{2}+t-1}\eta^2+\frac{a_4}{t }\int_\Omega f^{\alpha_0+\frac{p}{2}+t-1}|\nabla\eta|^2.
\end{split}
\end{align}
Thus, we finish the proof.
\end{proof}

\subsection{\texorpdfstring{$L^{\beta}$}\, bound of gradient in a geodesic ball with radius $3R/4$}\label{sect3.2}
	
\begin{lem}\label{lpbound}
Let $(M^n,g)$ be an $n$-dimensional complete Riemannian manifold with Ricci curvature $\ric_g\geq -(n-1)\kappa g$ where $\kappa$ is a non-negative constant and $\Omega = B_R(o)\subset M$ is a geodesic ball. Assume that $f$ is the same as in the above lemma. For a given positive number $$\beta = \left(\alpha_0+t_0+\frac{p}{2}-1\right)\frac{n}{n-2},$$
there exists a constant $a_8 = a_8(n,p, q)>0$ such that
\begin{align}\label{lpbpund}
\|f \|_{L^{\beta}(B_{3R/4}(o))}\leq a_8V^{\frac{1}{\beta}} \frac{t_0^2}{ R^2},
\end{align}
where $V$ is the volume of geodesic ball $B_R(o)$.
\end{lem}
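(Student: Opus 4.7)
The plan is to apply Lemma~\ref{lem3.2} once with $t=t_0$ and a suitable cutoff, then close the estimate by Young's inequality on the right-hand side. Set $k:=\alpha_0+p/2+t_0$, so that the target exponent satisfies $\beta=(k-1)n/(n-2)$. Pick a standard bump $\phi\in C^\infty_0(B_R(o))$ with $\phi\equiv 1$ on $B_{3R/4}(o)$, $0\leq\phi\leq 1$, $|\nabla\phi|\leq C_0/R$, and take $\eta:=\phi^k$. This power is forced on us by $|\nabla\eta|^2\leq k^2C_0^2R^{-2}\phi^{2(k-1)}$, which matches the power $k-1$ appearing on the right of Lemma~\ref{lem3.2} and makes the Young step produce exactly the $f^k\phi^{2k}=f^k\eta^2$ structure absorbable on the left.

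With these choices, both right-hand side terms in Lemma~\ref{lem3.2} become integrals of $f^{k-1}$ against a power of $\phi$. I apply Young's inequality with conjugate exponents $k/(k-1)$ and $k$ to each, tuning the free parameters so that the resulting $\int f^k\eta^2$ contributions are absorbed into the leading $\beta_{n,p,q,r}\int f^k\eta^2$ on the left. The residual constant-in-$f$ contributions are bounded by $\int\eta^2\leq V$, and, since $k$ and $t_0$ have comparable size, the residue coming from the gradient-of-cutoff term is dominated by the residue from the first term; the combined estimate takes the form
\begin{align*}
\tfrac{\beta_{n,p,q,r}}{2}\!\int_\Omega\! f^k\eta^2 + \tfrac{a_3\,e^{-t_0}V^{2/n}}{t_0R^2}\bigl\|f^{(k-1)/2}\eta\bigr\|_{L^{2n/(n-2)}}^{2} \leq C^kV\bigl(t_0^2/R^2\bigr)^k,
\end{align*}
with $C=C(n,p,q,r)$.

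Discarding the positive $\int f^k\eta^2$ term, using $\eta\equiv 1$ on $B_{3R/4}(o)$ to bound $\|f^{(k-1)/2}\eta\|_{L^{2n/(n-2)}}^{2}\geq\|f\|_{L^\beta(B_{3R/4}(o))}^{k-1}$, and isolating $\|f\|_{L^\beta}$ yields
\begin{align*}
\|f\|_{L^\beta(B_{3R/4}(o))} \leq \Bigl(\tfrac{C^ke^{t_0}}{a_3}\Bigr)^{1/(k-1)} V^{(n-2)/(n(k-1))} \frac{t_0^{(2k+1)/(k-1)}}{R^2}.
\end{align*}
The identity $(n-2)/(n(k-1))=1/\beta$ takes care of the volume factor. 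Since $\alpha_0\geq 3/2$ and $p>1$ imply $k-1>t_0$, we get $e^{t_0/(k-1)}\leq e$ and $C^{k/(k-1)}\leq\max(C^2,1)$; and since $t_0\geq c_{n,p,q,r}\geq c_0+1>1$ (by \eqref{equa2.8}), the boundedness of $x\mapsto x^{1/x}$ gives $t_0^{3/(k-1)}=O(1)$, whence $t_0^{(2k+1)/(k-1)}=t_0^{2}\cdot t_0^{3/(k-1)}\leq C'\,t_0^2$. Folding these bounds into a single constant $a_8$ depending on $n,p,q,r$ yields \eqref{lpbpund}.

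The principal obstacle is keeping the $e^{-t_0}$ factor on the left and the $(t_0^2/R^2)^k$ factor on the right compatible after extracting the $(k-1)$-th root. This is exactly why $t_0$ is chosen in~\eqref{equa2.8} to satisfy $t_0=c_{n,p,q,r}(1+\sqrt\kappa R)$ with $c_{n,p,q,r}\geq c_0+1$: the resulting uniform bound $k-1\geq t_0$ guarantees that both $e^{t_0/(k-1)}$ and $t_0^{1/(k-1)}$ remain $O(1)$, and thereby the Cheng-Yau scaling $t_0^2/R^2$ is preserved after taking the root.
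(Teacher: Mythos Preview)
Your argument is correct and follows essentially the same route as the paper. The only cosmetic difference is that you handle the term $a_5 t_0^2R^{-2}\!\int f^{k-1}\eta^2$ directly by Young's inequality, whereas the paper phrases the same step as a level-set decomposition $\Omega=\{f<2a_5t_0^2/(\beta_{n,p,q,r}R^2)\}\cup\{f\geq\cdots\}$; these are equivalent, and the cutoff choice $\eta=\phi^k$, the absorption of the gradient term via H\"older/Young, and the final root extraction with the bounds $k-1>t_0$ and $t_0\geq c_{n,p,q,r}>1$ match the paper's proof.
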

	
\begin{proof}
We set $t=t_0$ in \eqref{2.34} and obtain
\begin{align}\label{equation:3.31}
\begin{split}
& \beta_{n,p,q,r}\int_\Omega f^{\alpha_0+\frac{p}{2}+t_0}\eta^2 + \frac{a_3}{ t_0 } e^{-t_0}V^{\frac{2}{n}}R^{-2}\left\|f^{\frac{\alpha_0+t_0-1}{2}+\frac{p}{4}}\eta\right\|_{L^{\frac{2n}{n-2}}}^2\\
\leq  &\ a_5t_0^2R^{-2} \int_\Omega f^{\alpha_0+\frac{p}{2}+t_0-1}\eta^2+\frac{a_4}{t_0 }\int_\Omega f^{\alpha_0+\frac{p}{2}+t_0-1}|\nabla\eta|^2.
\end{split}
\end{align}
From (\ref{equation:3.31}) we know that, if
		$$
		f\geq  \frac{2a_{5}t_0^2}{\beta_{n,p,q,r}R^2},
		$$
then there holds true
		$$
		a_5t_0^2R^{-2} \int_\Omega f^{\alpha+\frac{p}{2}+t_0-1}\eta^2\leq\frac{\beta_{n,p,q,r}}{2}\int_\Omega f^{\alpha_0+\frac{p}{2}+t_0}\eta^2.
		$$
		
Now we denote
$$\Omega_1 = \left\{f<  \frac{2a_{5}t_0^2}{\beta_{n,p,q,r}R^2} \right\},$$
then $\Omega$ can be decomposed into two parts $\Omega_1$ and $\Omega_2$, i.e., $\Omega=\Omega_1\cup\Omega_2$ where $\Omega_2$ is the complement of $\Omega_1$. Hence, we have
		\begin{align}\label{2.36}
			\begin{split}
				a_5t_0^2R^{-2} \int_\Omega f^{\alpha_0+\frac{p}{2}+t_0-1}\eta^2
				=&\ a_5t_0^2R^{-2} \int_{\Omega_1} f^{\alpha_0+\frac{p}{2}+t_0-1}\eta^2
				+a_5t_0^2R^{-2} \int_{\Omega_2} f^{\alpha_0+\frac{p}{2}+t_0-1}\eta^2
				\\
				\leq&\
				\frac{a_5t_0^2}{R^2} \left(\frac{2a_{5}t_0^2}{\beta_{n,p,q,r}R^2}\right)^{\alpha_0+\frac{p}{2} +t_0-1 }V
				+
				\frac{\beta_{n,p,q,r} }{2}\int_\Omega f^{\alpha_0+\frac{p}{2}+t_0}\eta^2,
			\end{split}
		\end{align}
where $V$ is the volume of $B_R(o)$. It follows from (\ref{2.34}) and (\ref{2.36})
		\begin{align}
			\label{2.37}
			\begin{split}
				&\frac{\beta_{n,p,q,r}}{2}\int_\Omega f^{\alpha_0+\frac{p}{2}+t_0}\eta^2
				+
				\frac{a_3}{ t_0 }e^{-t_0}V^{\frac{2}{n}}R^{-2}\left\|f^{\frac{\alpha_0+t_0-1}{2}+\frac{p}{4} }\eta\right\|_{L^{\frac{2n}{n-2}}}^2\\
				\leq \ &
				\frac{a_5t_0^2}{R^2} \left(\frac{2a_{5}t_0^2}{\beta_{n,p,q,r}R^2}\right)^{\alpha_0+\frac{p}{2} +t_0-1 }V
				+\frac{a_4}{ t_0 }\int_\Omega f^{\alpha_0+\frac{p}{2}+t_0-1}|\nabla\eta|^2.
			\end{split}
		\end{align}

Now we choose $\gamma\in C^{\infty}_0(B_R(o))$ satisfying
		$$
		\begin{cases}
			0\leq\gamma(x)\leq 1,\quad |\nabla\gamma(x)|\leq\frac{C }{R}, &\forall x\in B_R(o);\\
			\gamma(x)\equiv 1, & \forall x\in B_{ {3R}/{4}}(o),
		\end{cases}
		$$
and let $\eta = \gamma^{ \alpha_0 + \frac{p}{2}+t_0}$. Then, we have
		\begin{align}\label{314}
			a_4R^2 |\nabla\eta|^2
			\leq
			a_4 C^2 \left( t_0+ \frac{p}{2}+\alpha_0\right )^2\eta ^{\frac{2\alpha_0+2t_0+p-2}{\alpha_0+p/2+t_0}}
			\leq
			a_{6}t^2_0\eta^{\frac{2\alpha_0+2t_0+p-2}{\alpha_0+p/2+t_0}}.
		\end{align}
By H\"older inequality and Young inequality, we can deduce the following
\begin{align}\label{2.39}
\begin{split}
\frac{a_4}{t_0}\int_{\Omega}f^{\frac{p}{2}+\alpha_0+t_0-1}|\nabla\eta|^2\leq\  &\frac{a_6t_0}{R^2} \int_{\Omega}f^{\frac{p}{2}+\alpha_0+t_0-1}
\eta^{\frac{2\alpha_0+p+2t_0-2}{\alpha_0+p/2+t_0}}\\
\leq\ & \frac{a_6t_0}{R^2}  \left(\int_{\Omega}f^{\alpha_0+t_0+ \frac{p}{2} }\eta^2\right)^{\frac{\alpha_0+p/2+t_0-1}{\alpha+p/2+t_0}}V^{\frac{ 1}{\alpha_0+t_0+ p/2 }}\\
\leq \ & \frac{\beta_{n,p,q,r}}{2}\left[\int_{\Omega}f^{ \alpha_0+t_0+\frac{p}{2} }\eta^2 + \left(\frac{2a_{6}t_0 }{\beta_{n,p,q,r}R^2}\right)^{ \alpha_0+t_0+\frac{p}{2} }V\right].
\end{split}
\end{align}
Hence, we conclude from (\ref{2.37}) and (\ref{2.39}) that
\begin{align}\label{2.40}
\begin{split}
&\left(\int_{\Omega}f^{\frac{n(p/2+\alpha_0+t_0-1)}{n-2}}\eta^{\frac{2n}{n-2}}\right)^{\frac{n-2}{n}}\\
\leq\ &\frac{t_0}{a_3} e^{t_0}V^{1-\frac{2}{n}}R^2\left[\frac{2a_5t_0^2}{R^2} \left(\frac{2a_{5}t_0^2}{\beta_{n,p,q,r}R^2}\right)^{\alpha_0+t_0+\frac{p}{2}-1 } + \frac{a_{6}t^2_0}{R^2} \left(\frac{2a_{6}t_0 }{\beta_{n,p,q,r}R^2}\right)^{\alpha_0+t_0+\frac{p}{2} -1 }\right]\\
\leq\ &a_7^{t_0}e^{t_0}V^{1-\frac{2}{n}}t_0^3\left( \frac{t_0^2}{ R^2}\right)^{\alpha_0+t_0+\frac{p}{2} -1},
\end{split}
\end{align}
where the constant $a_7$ depends only on $n,\,p,\, q$ and $r$, and satisfies
$$
a_7^{t_0} \geq \frac{2a_5}{a_3}\left(\frac{4a_5}{\beta_{n,p,q,r}}\right)^{\alpha_0+t_0+\frac{p}{2}-1}
+\frac{a_6}{a_3}\left(\frac{4a_6}{\beta_{n,p,q,r}t_0}\right)^{\alpha_0+t_0+\frac{p}{2}-1}.
$$
Here we have used the fact $t_0\geq1$.
		
Taking $\frac{1}{\alpha_0+t_0+p/2-1}$ power of the both sides of (\ref{2.40}) gives
\begin{align}
\left\|f\eta^{\frac{2}{\alpha_0+t_0+p/2-1}}\right\|_{L^{\beta}(\Omega)}\leq a_7^{\frac{t_0}{\alpha_0+t_0+p/2-1}}V^{1/\beta}
t_0^{\frac{3}{\alpha_0+t_0+p/2-1}}\frac{t_0^2}{ R^2}\leq a_8V^{\frac{1}{\beta}} \frac{t_0^2}{ R^2},
\end{align}
where $a_8$ depends only on $n,\ p,\ q$ and $r$, and satisfies
		$$
		a_8 \geq a_7^{\frac{t_0}{\alpha_0+t_0+p/2-1}}t_0^{\frac{3}{\alpha_0+t_0+p/2-1}}.
		$$
Since $\eta\equiv1$ in $B_{3R/4}$, we obtain that
		$$
		\|f \|_{L^{\beta}(B_{3R/4}(o))}\leq a_8V^{\frac{1}{\beta}} \frac{t_0^2}{ R^2}.
		$$
Thus, the proof of this lemma is finished.
\end{proof}

\subsection{Moser iteration}\label{sec3.3}\

Now we turn to providing the proof of \thmref{thm1}.
	
\begin{proof}
We discard the first term in (\ref{2.34}) to obtain
\begin{align}\label{2.42}
\frac{a_3}{ t }e^{-t_0}V^{\frac{2}{n}}R^{-2}\left\|f^{\frac{\alpha_0+t-1}{2}+\frac{p}{4} }\eta\right\|_{L^{\frac{2n}{n-2}}}^2\leq
\frac{a_5\alpha_0^2}{R^2}  \int_\Omega f^{\alpha_0+\frac{p}{2}+t-1}\eta^2+\frac{a_4}{ t }\int_\Omega f^{\alpha_0+\frac{p}{2}+t-1}|\nabla\eta|^2.
\end{align}
For $k=1,\, 2,\, \cdots$, let $$r_k = \frac{R}{2}+\frac{R}{4^k}$$ and $\Omega_k = B_{r_k}(o)$. Choose a sequence of cut-off function $\eta_k\in C^{\infty}(\Omega_k)$ which satisfy
\begin{align}
\begin{cases}
0\leq \eta_k(x)\leq1, \quad |\nabla\eta_k(x)|\leq \frac{C4^k}{R}, & \forall x\in B_{r_{k+1}}(o);\\
\eta_k(x)\equiv 1, &\forall x\in B_{r_{k+1}}(o),
\end{cases}
\end{align}
where $C$ is a constant that does not depend on $(M,g)$ and equation \eqref{equa1}. Substituting $\eta=\eta_k$ into  \eqref{2.42}, we arrive at
\begin{align*}
a_3e^{-t_0}V^{\frac{2}{n}} \left\|f^{\frac{\alpha_0+t-1}{2}+\frac{p}{4} }\eta_k\right\|_{L^{\frac{2n}{n-2}}(\Omega_k)}^2\leq \ &
a_5t_0^2t\int_{\Omega_k} f^{\alpha_0+\frac{p}{2}+t-1}\eta_k^2+ a_4R^2\int_{\Omega_k} f^{\alpha_0+\frac{p}{2}+t-1}\left|\nabla\eta_k\right|^2\\
\leq\ & \left(a_5t_0^2t + C^216^k\right)\int_{\Omega_k} f^{\alpha_0+\frac{p}{2}+t-1}.
\end{align*}
		
Now we choose $\beta_1=\beta$, $\beta_{k+1}=n\beta_k/(n-2)$, $k=1,\,2,\, \cdots$, and let $t=t_k$ such that
$$t_k+\frac{p}{2}+\alpha_0-1=\beta_k,$$
then we have
\begin{align*}
a_3 \left(\int_{\Omega_k}f^{\beta_{k+1}}\eta_k^\frac{2n}{n-2}\right)^{\frac{n-2}{n}} \leq\
e^{t_0}V^{-\frac{2}{n}}\left(a_5t_0^2\left(t_0+\alpha_0+\frac{p}{2}-1\right)\left(\frac{n}{n-2}\right)^k + C^216^k\right)\int_{\Omega_k} f^{\beta_k}.
\end{align*}
Since $n/(n-2)<16, ~\forall n>2$, there exists some constant $a_9 $ such that
\begin{align}\label{eq:2.45}
\left(\int_{\Omega_k}f^{\beta_{k+1}}\eta_k^\frac{2n}{n-2}\right)^{\frac{n-2}{n}} \leq \ a_9t_0^316^k e^{ t_0}V^{-\frac{2}{n}}  \int_{\Omega_k} f^{\beta_k}.
\end{align}
Taking $1/\beta_k$ power of the both sides of (\ref{eq:2.45}) respectively and using the fact
$$\eta_k\equiv1\in\Omega_{k+1},$$
we obtain
\begin{align}\label{2.46}
\begin{split}
\|f\|_{L^{\beta_{k+1}}(\Omega_{k+1})}\leq & \left(a_9t_0^316^k e^{ t_0}V^{-\frac{2}{n}}\right)^{\frac{1}{\beta_k}} 16 ^{\frac{k}{\beta_k}}\|f\|_{L^{\beta_k}(\Omega_k)}.
\end{split}
\end{align}
Noting
$$\sum_{k=1}^{\infty}\frac{1}{\beta_k} =\frac{n}{2\beta_1} \quad\text{and}\quad \sum_{k=1}^{\infty}\frac{k}{\beta_k}=\frac{n^2}{4\beta_1},$$
we have
\begin{align}\label{2.47}\|f\|_{L^{\infty}(B_{R/2}(o))}\leq \
a_{10}  V^{-\frac{1}{\beta}} \|f\|_{L^{\beta_1}(B_{3R/4}(o))},
\end{align}
where
		$$
		a_{10} \geq \left(a_9t_0^316^k e^{ t_0}\right)^{\frac{n}{2\beta_1 }} 16 ^{\frac{n^2}{4\beta_1}}.
		$$
By (\ref{lpbpund}), we obtain
		\begin{align}\label{3.44}
			\|f\|_{L^{\infty}(B_{R/2}(o))}\leq\ a_{11}\frac{(1+\sqrt{\kappa}R)^2}{R^2},
		\end{align}
where $a_{11} = a_{10}a_8c_{n,p,q,r}$. Reminding $f=|\nabla u|^2$ and $u=-(p-1)\log v$, we obtain the desired estimate. Thus, we complete the proof of \thmref{thm1}.
\end{proof}

\subsection{Proof of \thmref{thm1.4} }
In this case, we have $\kappa=0$. By Theorem 1.1, we have
\begin{align}\label{3.45}
\frac{|\nabla v(x_0)|}{v(x_0)}\leq \sup_{B(x_0,\frac{R}{2})}\frac{|\nabla v|}{v}\leq \frac{C(n,p,q,r)}{R}, \quad \forall x_0\in M.
\end{align}
Letting $R\to\infty$ in \eqref{3.45}, we obtain
	$$
	\nabla v(x_0)=0,\quad\forall x_0\in M.
	$$
Therefore, $v$ is a constant and $\Delta_pv=0$. If $bc>0$, then \eqref{equa1} admits no positive solutions; if $b>0,c<0$ and $bv^q+cv^r\not\equiv 0$, then the unique solution of \eqref{equa0} is $v=(-\frac{c}{b})^{\frac{1}{q-r}}$. If $q=r$, or $b=0, c\neq 0$, or $b\neq0, c=0$, the equation \eqref{equa1} is reduced to Lane-Emden equation and the related Liouville theorem is known in \cite{hewangwei2024}.
\qed
	
\subsection{Proof of \thmref{thm1.5}}\

For any $x\in B_{R/2}(o)\subset M$,  by Theorem 1.1, we have
	\begin{align}
		\label{3.48}
		|\nabla \ln v(x)|\leq C(n,p,q,r)\frac{ 1+\sqrt{\kappa}R }{R }.
	\end{align}
Choosing a minimizing geodesic $\gamma(t)$ with arc length parameter connecting $o$ and $x$:
	$$
	\gamma:[0,d]\to M,\quad\gamma(0)=o, \quad \gamma(d)=x.
	$$
where $d=d(x,o)$ is the geodesic distance from $o$ to $x$, we have
	\begin{align}
		\ln v(x)-\ln v(o)=\int_0^d\frac{d}{dt}\ln v\circ\gamma(t)dt.
	\end{align}
Since $|\gamma'|=1$ , we have
	\begin{align}
		\left|\frac{d}{dt}\ln v\circ\gamma(t)\right|\leq |\nabla \ln v||\gamma'(t)| \leq C(n,p,q,r)\frac{ 1+\sqrt{\kappa}R }{R }.
	\end{align}	
Thus it follows from $d\leq R/2$ and the above inequality that
	$$
	e^{-C(n,p,q,r)(1+\sqrt{\kappa}R)/2}\leq v(x)/v(o)\leq e^{C(n,p,q,r)(1+\sqrt{\kappa}R)/2}.
	$$
So, for any $x, y\in B_{R/2}(o)$ we have
	$$
	v(x)/v(y)\leq e^{C(n,p,q,r)(1+\sqrt{\kappa}R)}.
	$$

Now suppose $v$ is a global positive solution of equation \eqref{equa1} on $M$. Letting $R\to\infty$ in \eqref{3.48}, we obtain that
	$$
	|\nabla \ln v(x)|\leq C(n,p,q,r)\sqrt{\kappa}, \quad \forall x\in M.
	$$
For any $y\in M$, choose a minimizing geodesic $\gamma(t)$ parameterized by arc length which connects $x$ and $y$:
	$$
	\gamma:[0,d]\to M,\quad\gamma(0)=x, \quad \gamma(d)=y.
	$$
where $d=d(x,y)$ is the distance from $x $ to $y$. Then, we have
	\begin{align}\label{3.49}
		\ln v(y)-\ln v(x)=\int_0^d\frac{d}{dt}\ln v\circ\gamma(t)dt.
	\end{align}
Since $|\gamma'(t)|=1$, we have
	\begin{align}\label{3.50}
		\left|\frac{d}{dt}\ln v\circ\gamma(t)\right|\leq |\nabla \ln v||\gamma'(t)| = C(n,p,q,r)\sqrt{\kappa}.
	\end{align}
It follows from (\ref{3.49}) and (\ref{3.50}) that
	\begin{align}\label{3.51}
		v(y)/v(x)\leq e^{C(n,p,q,r)\sqrt{\kappa}d(x, y)}.
	\end{align}
Thus we finish the proof by (\ref{3.51}).

\subsection{The case $\dim(M)=2$.}\

In the proof of \thmref{thm1} above, we have used Saloff-Coste's Sobolev inequality on the embedding $W^{1,2}(B)\hookrightarrow
L^{\frac{2n}{n-2}}(B)$ on a manifold. Since the Sobolev exponent $2^*=2n/(n-2)$ requires $n>2$, we did not consider the case $n=2$ in \thmref{thm1}. In this subsection, we will explain briefly that \thmref{thm1} can also be established when $n=2$.
	
When $\dim M=n=2$, we need the special case of  Saloff-Coste's Sobolev theorem, i.e., \lemref{salof}. For any $n'> 2$, there holds
	$$
	\|f\|_{L^{\frac{2n'}{n'-2}}(B)}^2\leq e^{c_0(1+\sqrt{\kappa}R)}V^{-\frac{2}{n'}}R^2\left(\int_B|\nabla f|^2+R^{-2}f^2\right)
	$$for any $f\in C^{\infty}_0(B)$.
	
For example, we can choose $n'=4$, then we have	
\begin{align*}
\left(\int_{\Omega}f^{2\alpha+p}\eta^{4}\right)^{\frac{1}{2}}\leq e^{c_0(1+\sqrt{\kappa} R)}V^{-\frac{1}{2}}\left(R^2\int_{\Omega}\left|\nabla \left(f^{\frac{p}{4}+\frac{\alpha}{2}}\eta\right)\right|^2+\int_{\Omega}f^{\frac{p}{2}+\alpha}\eta^2\right).
\end{align*}
By the above inequality and \eqref{equation:3.31}, we can deduce the following integral inequality by almost the same method as in \lemref{lem3.2}.
\begin{align} \label{equation3.19}
\begin{split}
			& \beta_{n,p,q,r}\int_\Omega f^{\alpha_0+\frac{p}{2}+t}\eta^2 +
			\frac{a_3}{ t }e^{-t_0}V^{\frac{1}{2}}R^{-2}\left\|f^{\frac{\alpha_0+t-1}{2}+\frac{p}{4} }\eta\right\|_{L^{4}}^2\\
			\leq\  & a_5t_0^2R^{-2} \int_\Omega f^{\alpha_0+\frac{p}{2}+t-1}\eta^2+\frac{a_4}{t }\int_\Omega f^{\alpha_0+\frac{p}{2}+t-1}|\nabla\eta|^2,
\end{split}
\end{align}
where $\alpha_0$ is the same as  $\alpha_0$ defined in Section \ref{sect:3.1}, but the constants $a_i$ ($i=3, \, 4,\, 5$) may differ from those defined in Section \ref{sect:3.1}.
	
By repeating the same procedure as in Section \ref{sect3.2}, we can deduce from \eqref{equation3.19} the $L^{\beta}$-bound of $f $ in a geodesic ball with radius $3R/4$
\begin{align}\label{equation:3.20}
\|f \|_{L^{\beta}(B_{3R/4}(o))}\leq a_8V^{\frac{1}{\beta}} \frac{t_0^2}{ R^2},
\end{align}
where $\beta=p+2\alpha_0+2t_0-2$.
	
For the Nash-Moser iteration, we set $$\beta_l = 2^l(\alpha_0+t_0+p/2-1)$$ and $\Omega_l$ by the similar way with that in Section \ref{sec3.3}, and can obtain the following inequality
\begin{align}\label{equation:3.21}
\|f\|_{L^{\infty}(B_{\frac{R}{2}}(o))}\leq\ &a_{11} V^{-\frac{1}{\beta}}\|f\|_{L^{\beta}(B_{\frac{3R}{4}}(o))}.
\end{align}
Combining \eqref{equation:3.20} and \eqref{equation:3.21}, we finally obtain the Cheng-Yau type gradient estimate. Harnack inequality and Liouville type results follow from the Cheng-Yau type gradient estimate, here we omit the details.

\section{Global gradient estimate}
By the local gradient of positive solution $v$ of the equation \eqref{equa1}, we have
\begin{align*}
\sup_{B_{R/2}(o)}\frac{|\nabla v|}{v}\leq C(n,p,q,r)\frac{1+\sqrt{\kappa}R}{R}.
\end{align*}
If $v$ is a global solution, letting $R\to\infty$ in the above local gradient estimate, we obtain
\begin{align*}
\frac{|\nabla v|}{v}(x)\leq C(n,p,q,r)\sqrt{\kappa}, \quad\forall x\in M.
\end{align*}
In this section, we give a explicit expression of the above $C(n,p,q,r)$ in this section. Our approach is based on an ingenious idea from Sung and Wang(\cite{MR3275651}).

\subsection{Explicit upper bounds of $C(n,p,q,r)$: Proof of \thmref{t10}}\

\begin{thm}[\thmref{t10}]
Let $(M^n,g)$ be a complete non-compact Riemannian manifold with $\mathrm{Ric}_g\geq-(n-1)\kappa$ where $\kappa$ is a non-negative constant. Let $v$ be a global positive solution of \eqref{equa1} in $M$.
\begin{enumerate}
\item If
	\begin{align}\label{61}
		b\left(\frac{n+1}{n-1}-\frac{q}{p-1}\right)\geq 0 \quad \text{and}\quad c\left(\frac{n+1}{n-1}-\frac{r}{p-1}\right)\geq 0,
	\end{align}
then
	\begin{align*}
		\frac{|\nabla v|}{v}(x)\leq \frac{(n-1)\sqrt{\kappa}}{p-1}, \quad\forall x\in M.
	\end{align*}
	
\item If
	\begin{align}\label{62}
		c\leq 0\quad\text{and}\quad \left|\frac{q}{p-1}-\frac{n+1}{n-1}\right|<\frac{2}{n-1},
	\end{align}
then
\begin{align*}
\frac{|\nabla v|}{v}(x)\leq \frac{2\sqrt{\kappa}}{(p-1)\sqrt{\left(\frac{q}{p-1}-1\right)\left(\frac{n+3}{n-1}-\frac{q}{p-1}\right)}}, \quad\forall x\in M.
\end{align*}
\item If
\begin{align}\label{63}
b\geq 0\quad \text{and}\quad \left|\frac{r}{p-1}-\frac{n+1}{n-1}\right|< \frac{2}{n-1},
\end{align}
then
\begin{align*}
\frac{|\nabla v|}{v}(x)\leq \frac{2\sqrt{\kappa}}{(p-1)\sqrt{\left(\frac{r}{p-1}-1\right)\left(\frac{n+3}{n-1}-\frac{r}{p-1}\right)}}, \quad\forall x\in M.
\end{align*}
\end{enumerate}
\end{thm}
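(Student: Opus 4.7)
The plan is to combine a sharpened version of the pointwise inequality from \lemref{lem3.1} with a maximum-principle argument on the entire manifold. First, by \thmref{thm1}, letting $R\to\infty$ in the local gradient estimate implies that $f=|\nabla u|^2=(p-1)^2|\nabla v|^2/v^2$ is globally bounded on $M$; put $f_0:=\sup_M f<\infty$.

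The first step is to retain the explicit $\alpha$-dependence of the coefficient $\beta_{n,p,q,r}$ in \lemref{lem3.1}, instead of fixing one particular $\alpha_0$. A direct inspection of the derivation there (cf.~\eqref{equa:3.11}) shows that as $\alpha\to\infty$, $B_{n,p,\alpha}\to 1/(n-1)$ and the admissible coefficient converges to
\begin{align*}
\beta_\infty=\begin{cases}\dfrac{1}{n-1}, & (b,c,q,r)\in W_1,\\[1ex] \dfrac{n-1}{4}\Bigl(\dfrac{q}{p-1}-1\Bigr)\Bigl(\dfrac{n+3}{n-1}-\dfrac{q}{p-1}\Bigr), & (b,c,q,r)\in W_2,\\[1ex] \dfrac{n-1}{4}\Bigl(\dfrac{r}{p-1}-1\Bigr)\Bigl(\dfrac{n+3}{n-1}-\dfrac{r}{p-1}\Bigr), & (b,c,q,r)\in W_3.\end{cases}
\end{align*}
An elementary algebraic computation using $q_1=1-q/(p-1)$ identifies $\frac{1}{n-1}-\frac{n-1}{4}(q_1+\frac{2}{n-1})^2$ with the Case 2 expression, and similarly for Case 3. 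The positivity of $\beta_\infty$ in Cases 2,3 is exactly the content of the strict inequalities in $W_2,W_3$.

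Next, I would apply a maximum-principle argument to $f^\alpha$ on $M$. Since $\mathrm{Ric}\geq-(n-1)\kappa g$, one can invoke the Omori--Yau maximum principle (or, in the style of Sung--Wang, multiply $f^\alpha$ by a slowly decaying weight and let the weight tend to $1$) to produce a sequence $\{x_k\}$ with $f(x_k)\to f_0$, $|\nabla f|(x_k)\to 0$, and $\mathcal{L}(f^\alpha)(x_k)\leq o(1)$. Feeding this back into the pointwise inequality
\begin{align*}
\mathcal{L}(f^\alpha)\geq 2\alpha\beta(\alpha)f^{\alpha+p/2}-2\alpha(n-1)\kappa f^{\alpha+p/2-1}-\alpha a_1|\nabla f|f^{\alpha+(p-3)/2},
\end{align*}
dividing by $2\alpha f_0^{\alpha+p/2-1}$, and then letting first $k\to\infty$ and subsequently $\alpha\to\infty$, yields $\beta_\infty f_0\leq(n-1)\kappa$, i.e.\ $f_0\leq(n-1)\kappa/\beta_\infty$. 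Translating this via $|\nabla v|/v=\sqrt{f_0}/(p-1)$ and using the identity $(n-1)/\beta_\infty=4/[(\tau-1)((n+3)/(n-1)-\tau)]$ with $\tau=q/(p-1)$ in Case 2 and $\tau=r/(p-1)$ in Case 3 produces the three explicit bounds.

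The main obstacle is the maximum-principle step: $f_0$ need not be attained, and $\mathcal{L}$ is a linearized $p$-Laplacian whose coefficients are only measurable in general and degenerate at $\{f=0\}$. The Sung--Wang trick is designed precisely to sidestep both difficulties by exploiting the globally uniform bound on $f$ already supplied by \thmref{thm1}: the auxiliary weight's error can be controlled uniformly in $x$, and one may restrict attention to the open set $\{f>0\}$, where the regularity theory for $u$ is classical and pointwise computations are legitimate.
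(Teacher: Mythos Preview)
Your computation of the limiting constant $\beta_\infty$ in each of the three cases, and the algebra translating $f_0\le(n-1)\kappa/\beta_\infty$ into the stated bounds on $|\nabla v|/v$, are correct. The difficulty is entirely in the maximum-principle step, and there the proposal has a genuine gap rather than a cosmetic one.

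Under a Ricci lower bound alone, the Omori--Yau principle gives a sequence $x_k$ with $f(x_k)\to f_0$, $|\nabla f|(x_k)\to 0$ and $\Delta(f^\alpha)(x_k)\le 1/k$; it does \emph{not} control $\mathcal L(f^\alpha)(x_k)$. In the frame $e_1=\nabla u/|\nabla u|$ the principal part of $\mathcal L$ is $f^{p/2-1}\bigl((p-1)\partial_1^2+\sum_{i\ge 2}\partial_i^2\bigr)$, so at the Omori--Yau sequence one has $\mathcal L(f^\alpha)(x_k)=f(x_k)^{p/2-1}\bigl((p-2)(f^\alpha)_{11}(x_k)+\Delta(f^\alpha)(x_k)\bigr)+o(1)$, and nothing bounds $(f^\alpha)_{11}(x_k)$ from above: the trace condition $\Delta(f^\alpha)\le 1/k$ permits large individual Hessian entries with cancellation. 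The full-Hessian version of Omori's principle would close this, but it requires a sectional curvature lower bound, which is not assumed. Your description of the Sung--Wang device as ``producing a sequence $\{x_k\}$ with $\mathcal L(f^\alpha)(x_k)\le o(1)$'' is also not what that method does.

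The paper avoids any pointwise second-order condition. It works instead on the super-level set: for the target value $y_i$ and any $\delta>0$ set $\omega=(f-y_i-\delta)^+$, and from \eqref{equa:3.13a}--\eqref{equa:3.15a} deduce $\mathcal L(\omega^\alpha)\ge 2\alpha\,\omega^{\alpha-1}(l_1\omega-l_2|\nabla\omega|)$ for $\alpha$ large (\lemref{lem42}). One then tests against $\eta^2\omega^\gamma$ with cutoffs on concentric balls $B_k\subset B_{k+1}$; choosing $\gamma$ large forces $\int_{B_k}\omega^{\alpha+\gamma}$ to grow at least like $(C/\epsilon)^k$, which is incompatible with the Bishop--Gromov bound $\vol(B_k)\le e^{(n-1)\sqrt\kappa\,k}$ together with the a~priori $L^\infty$ bound on $\omega$ supplied by \thmref{thm1} (\lemref{lem43}). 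Hence $\omega\equiv 0$, and letting $\delta\downarrow 0$ yields $f\le y_i$. This integral scheme uses only the weak formulation of $\mathcal L$ and the global boundedness of $f$, sidestepping precisely the obstruction that blocks your Omori--Yau route.
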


In order to prove the above theorem (\thmref{t10}), we need to establish the following two lemmas.
\begin{lem}\label{lem42}
Let $(M,g)$ be an $n$-dim($n\geq 2$) complete non-compact Riemannian manifold with $\mathrm{Ric}_g\geq-(n-1)\kappa$ where $\kappa$ is a non-negative constant. Let $v$ be a global positive solution of \eqref{equa1} in $M$.
\begin{enumerate}
\item If \eqref{61} is satisfied, equivalently $(b,c,q,r)\in W_1$,
we denote $y_1=(n-1)^2\kappa$. For any $\delta>0$ and $\omega = (f-y_1-\delta)^+$, we have
\begin{align*}
\mathcal{L}(\omega^\alpha)\geq 	2\alpha\omega^{\alpha-1}  \left(k_1\omega-k_2|\nabla \omega|\right),
\end{align*}
where $\alpha>2$ and $k_1, k_2$ are two positive constants depending on $n,p,q,r,\kappa$ and $\delta$.
\item If \eqref{62} is satisfied, i.e. $(b,c,q,r)\in W_2$,
we denote
\begin{align*}
y_2 = \frac{4\kappa}{\left(\frac{n+3}{n-1}-\frac{q}{p-1}\right)\left( \frac{q}{p-1}-1\right)}.
\end{align*}
For any $\delta>0$, $\omega = (f-y_2-\delta)^+$ and $\alpha$ large enough, we have
\begin{align*}
\mathcal{L}(\omega^\alpha)\geq 	2\alpha\omega^{\alpha-1}  \left(l_1\omega-l_2|\nabla \omega|\right),
\end{align*}
where $l_1, l_2$ are two positive constants depending on $n,p,q,r,\kappa$ and $\delta$.
\item If \eqref{63} is satisfied, i.e. $(b,c,q,r)\in W_3$, we denote
\begin{align*}
y_3 = \frac{4\kappa}{\left(\frac{n+3}{n-1}-\frac{r}{p-1}\right)\left( \frac{r}{p-1}-1\right)}.
\end{align*}
For any $\delta>0$, $\omega = (f-y_3-\delta)^+$ and $\alpha$ large enough, we have
\begin{align*}
\mathcal{L}(\omega^\alpha)\geq 	2\alpha\omega^{\alpha-1}  \left(l_1\omega-l_2|\nabla \omega|\right),
\end{align*}
where $l_1, l_2$ are two positive constants depending on $n,p,q,r,\kappa$ and $\delta$.
\end{enumerate}
\end{lem}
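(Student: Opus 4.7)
The plan is to reproduce the pointwise Bochner computation of \lemref{lem3.1} with $\omega^\alpha$ in place of $f^\alpha$, and then to use the strict inequality $f>y_i+\delta$ available on $\{\omega>0\}$ to turn the leading quadratic-in-$f$ term into a positive multiple of $\omega$. First, on the open set $\{\omega>0\}$ one has $\nabla\omega=\nabla f$, and a direct chain-rule calculation gives the pointwise identity
$$\mathcal{L}(\omega^\alpha)=\alpha(\alpha-1)\omega^{\alpha-2}f^{p/2-1}\langle A(\nabla f),\nabla f\rangle+\alpha\omega^{\alpha-1}\mathcal{L}(f),$$
whose first summand is nonnegative since $\langle A(\nabla f),\nabla f\rangle\ge\min\{1,p-1\}|\nabla f|^2$.

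Next I would expand $\mathcal{L}(f)$ via \lemref{lem2.3} (specialized to $\alpha=1$), substitute \eqref{equ2.10} for $\langle\nabla\Delta_pu,\nabla u\rangle$, eliminate $\sum_{i\ge2}u_{ii}$ through \eqref{eq:2.13}, and bound $|\nabla\nabla u|^2\ge u_{11}^2+\tfrac{1}{n-1}(\sum_{i\ge2}u_{ii})^2$ together with the Ricci lower bound---exactly as in the derivation of \eqref{equa:3.13a}, \eqref{equa:3.14a} and \eqref{equa:3.15a}. Combining these ingredients with the $\omega^{\alpha-2}$ summand from the chain rule, one arrives, for $\alpha$ sufficiently large, at an estimate of the form
$$\mathcal{L}(\omega^\alpha)\ \ge\ 2\alpha\omega^{\alpha-1}f^{p/2-2}\bigl[\beta_{n,p,q,r,\alpha}f^2-(n-1)\kappa f-\tfrac{a_1}{2}f^{1/2}|\nabla f|\bigr],$$
where $\beta_{n,p,q,r,\alpha}$ is precisely the coefficient appearing in \lemref{lem3.1}: it equals $\tfrac{1}{n-1}$ in Case (1), and in Cases (2)--(3) it converges, as $\alpha\to\infty$, to $\tfrac{n-1}{4}(\tfrac{q}{p-1}-1)(\tfrac{n+3}{n-1}-\tfrac{q}{p-1})$ and to the analogous expression in $r$. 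Each threshold $y_i$ in the statement is then the positive root of $\beta_{n,p,q,r,\infty}f^2-(n-1)\kappa f=0$.

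Finally, fix $\alpha$ large enough that the positive root $y_i^{(\alpha)}:=(n-1)\kappa/\beta_{n,p,q,r,\alpha}$ satisfies $y_i^{(\alpha)}\le y_i+\delta/2$; on $\{\omega>0\}$ we then have $f\ge y_i+\delta\ge y_i^{(\alpha)}+\delta/2$ and hence
$$\beta_{n,p,q,r,\alpha}f^2-(n-1)\kappa f=\beta_{n,p,q,r,\alpha}f(f-y_i^{(\alpha)})\ge \beta_{n,p,q,r,\alpha}(y_i+\delta)\,\omega.$$
Combining with the lower bound $f\ge y_i+\delta>0$ (used to control the factor $f^{p/2-2}$) and $|\nabla\omega|=|\nabla f|$, one obtains the desired pointwise inequality with explicit constants $k_1, k_2$ (resp.\ $l_1, l_2$) depending on $n, p, q, r, \kappa, \delta$.

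The hard part will be the intermediate Cauchy absorption: the argument in \lemref{lem3.1} relies on the large $u_{11}^2$-coefficient $(2\alpha-1)(p-1)$ coming from the $f^\alpha$ chain rule, whereas the $\omega^\alpha$ chain rule places this large coefficient at the $\omega^{\alpha-2}$ level rather than at $\omega^{\alpha-1}$. The Sung--Wang trick of \cite{MR3275651} is designed to overcome exactly this mismatch: on $\{\omega>0\}$ the identity $f=\omega+y_i+\delta$ gives $\omega^{\alpha-2}f\ge(y_i+\delta)\omega^{\alpha-2}$, and multiplying by $\alpha(\alpha-1)$ produces a term that dominates the competing $\alpha\omega^{\alpha-1}$ cross terms once $\alpha$ is sufficiently large, thereby recovering the absorption and closing the argument.
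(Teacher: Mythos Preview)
Your overall strategy---compute $\mathcal L(\omega^\alpha)$ via the chain rule, then squeeze out a positive multiple of $\omega$ from the quadratic-in-$f$ term on $\{f>y_i+\delta\}$---is the right one, and the displayed inequality you are aiming for is exactly what the paper proves. But two points need correction.

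\textbf{The ``hard part'' is bypassed, not fought through.} The paper does \emph{not} redo the Bochner--Cauchy absorption with mixed $\omega$ and $f$ powers. Instead it uses the single inequality $\omega\le f$ (hence $\omega^{\alpha-2}\ge \omega^{\alpha-1}f^{-1}$) on the nonnegative first summand of your chain-rule identity to get
\[
\mathcal L(\omega^\alpha)\ \ge\ \omega^{\alpha-1}\Bigl[\alpha(\alpha-1)f^{p/2-2}\langle A(\nabla f),\nabla f\rangle+\alpha\,\mathcal L(f)\Bigr]\ =\ \frac{\omega^{\alpha-1}}{f^{\alpha-1}}\,\mathcal L(f^\alpha),
\]
and then simply plugs in the already-established pointwise bounds \eqref{equa:3.13a}, \eqref{equa:3.14a}, \eqref{equa:3.15a} from \lemref{lem3.1}. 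This one-line reduction completely avoids the power-mismatch you identify. Your proposed fix ``$\omega^{\alpha-2}f\ge(y_i+\delta)\omega^{\alpha-2}$'' is the trivial inequality $f\ge y_i+\delta$ and does not convert $\omega^{\alpha-2}$-terms to $\omega^{\alpha-1}$-terms; the relevant inequality is $\omega\le f$, used in the opposite direction.

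\textbf{You are missing the global upper bound on $f$.} To pass from
$2\alpha\omega^{\alpha-1}f^{(p-1)/2}\bigl(c\,f^{1/2}-\tfrac{a_1}{2}|\nabla\omega|\bigr)$ to $2\alpha\omega^{\alpha-1}(k_1\omega-k_2|\nabla\omega|)$ with constants depending only on $n,p,q,r,\kappa,\delta$, the lower bound $f\ge y_i+\delta$ alone is insufficient: the coefficient $f^{(p-1)/2}$ in front of $|\nabla\omega|$ has positive exponent and must be bounded \emph{above}, and when $p<2$ the factor $f^{p/2-1}$ in front of $\omega$ must be bounded \emph{below} (again requiring an upper bound on $f$). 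The paper supplies this by invoking the already-proved local estimate (\thmref{thm1} with $R\to\infty$), which gives $f\le C(n,p,q,r)^2\kappa$ globally; this is the step where the hypothesis that $v$ is a \emph{global} solution enters the lemma.
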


\begin{proof}
\textbf{Case 1.}
According to \lemref{lem3.1}, if \eqref{61} is satisfied, there holds
\begin{align}\label{eq412}
\frac{f^{2-\alpha-\frac{p}{2}}}{2\alpha}\mathcal{L} (f^{\alpha})\geq \frac{f^2}{n-1}-(n-1)\kappa f -\frac{a_1}{2} f^{\frac{1}{2}}|\nabla f|.
\end{align}
On $\{f\geq y_1+\delta\}$, the definition of operator $\mathcal{L}$ yields that
\begin{align*}
\mathcal{L}(\omega^\alpha) = \di(\alpha\omega^{\alpha-1}A(\nabla f))=\alpha(\alpha-1)\omega^{\alpha-2}\langle\nabla\omega, A(\nabla f)\rangle+\alpha\omega^{\alpha-1}\mathcal{L}(f).
\end{align*}
In the above expression, it is easy to see that $\nabla\omega=\nabla f$ in $\{f>y_1+\delta\}$, but $\nabla\omega$ causes a distribution on $\{f=y_1+\delta\}$. Now, if we assume $\alpha>2$, then the distribution caused by $\nabla\omega$ on $\{f=y_1+\delta\}$ is eliminated by $\omega$ since $\omega=0$  on $\{f=y_1+\delta\}$.

Since $f\geq \omega$ and $\langle\nabla f, A(\nabla f)\rangle\geq 0$, it follows that
\begin{align*}
\mathcal{L}(\omega^\alpha)\geq &\ \omega^{\alpha-1}\left(\alpha(\alpha-1)f^{-1}\langle\nabla f, A(\nabla f)\rangle+\alpha \mathcal{L}(f)\right)=\ \frac{\omega^{\alpha-1}}{f^{\alpha-1}}\mathcal{L}(f^\alpha).
\end{align*}
It follows from \eqref{eq412} and the above inequality that
\begin{align*}
\mathcal{L} (f^{\alpha})\geq &\ 2\alpha\omega^{\alpha-1}f^{\frac{p}{2}-1}  \left(\frac{f^2}{n-1}-(n-1)\kappa f-\frac{a_1}{2}f^{\frac{1}{2}}|\nabla \omega|\right).
\end{align*}
Since $f\geq y_1+\delta = (n-1)^2\kappa+\delta$, The above inequality can be rewritten as
\begin{align*}
\mathcal{L} (f^{\alpha})\geq &\ 2\alpha\omega^{\alpha-1}f^{\frac{p-1}{2}}  \left(f^{\frac{1}{2}}\left(\frac{f}{n-1}-(n-1)\kappa \right)-\frac{a_1}{2}|\nabla \omega|\right)\\
\geq& \ 2\alpha\omega^{\alpha-1}f^{\frac{p-1}{2}}  \left(f^{\frac{1}{2}} \frac{\delta}{n-1} -\frac{a_1}{2}|\nabla \omega|\right).
\end{align*}
Using the fact $$y_1+\delta\leq f\leq C(n,p,q,r)\sqrt{\kappa}$$ on $\{f\geq y_1+\delta\}$, we obtain
\begin{align*}
\mathcal{L}(f^\alpha)\geq 	2\alpha\omega^{\alpha-1}  \left(k_1\omega-k_2|\nabla \omega|\right), \quad\forall \alpha >2,
\end{align*}
where $k_1$ and $k_2$ are two positive constants depending on $n,p,q,r,\kappa, \delta$.
	
\textbf{Case 2.}
If \eqref{62} is satisfied, the inequality \eqref{equa:3.14a} implies that
\begin{align*}
\frac{f^{2-\alpha-\frac{p}{2}}}{2\alpha}\mathcal{L} (f^{\alpha})\geq &\ \beta_{n,p,q,\alpha}f^2-(n-1)\kappa f -\frac{a_1}{2}f^{\frac{1}{2}}|\nabla f|,
\end{align*}
where
$$
\beta^{(2)}_{n,p,q } =\lim_{\alpha\to\infty}\beta_{n,p,q, \alpha}= \frac{1}{n-1}-\frac{n-1}{4}\left(\frac{q}{p-1}-\frac{n+1}{n-1}\right)^2>0.
$$
Then, on $\{f\geq y_2+\delta\}$,
	$$
	f\geq y_2+\delta=\frac{ \kappa(n-1)}{\beta^{(2)}_{n,p,q }}+\delta.
	$$
Direct calculation shows that
\begin{align*}
\mathcal L(\omega^\alpha) \geq & \ 2\alpha\omega^{\alpha-1}f^{\frac{p-1}{2}}  \left(f^{\frac{1}{2}}\left(\beta_{n, p, q,\alpha} f  -(n-1)\kappa \right)-\frac{a_1}{2}|\nabla \omega|\right).
\end{align*}
Since
	$$
	\lim_{\alpha\to\infty}\beta_{n, p, q, \alpha} f  -(n-1)\kappa=\beta^{(2)}_{n,p,q }f  -(n-1)\kappa\geq \beta^{(2)}_{n,p,q }\delta,
	$$
we can choose $\alpha$ large enough such that
$$
\beta_{n, p, q, \alpha} f  -(n-1)\kappa>\frac{1}{2}\beta^{(2)}_{n,p,q}\delta.
$$
It follows
\begin{align*}
\mathcal L(\omega^\alpha) \geq & \ 2\alpha\omega^{\alpha-1}f^{\frac{p-1}{2}}  \left(f^{\frac{1}{2}}\frac{\beta^{(2)}_{n, p, q} \delta}{2}-\frac{a_1}{2}|\nabla \omega|\right)\\
=& 2\alpha\omega^{\alpha-1} \left(f^{\frac{p}{2}} \frac{\beta^{(2)}_{n,p,q }\delta}{2}-\frac{a_1}{2}f^{\frac{p-1}{2}}|\nabla \omega|\right).
\end{align*}
Using the fact $$y_2+\delta\leq f\leq C(n,p,q,r)\sqrt{\kappa}\quad \text{and}\quad f\geq\omega$$ on $\{f\geq y_2+\delta\}$, we have
\begin{align*}
\mathcal{L}(\omega^\alpha)\geq& 2\alpha\omega^{\alpha-1} \left(f^{\frac{p}{2}-1} \frac{\beta^{(2)}_{n,p,q }\delta}{2}\omega-\frac{a_1}{2}f^{\frac{p-1}{2}}|\nabla \omega|\right)\\
\geq &2\alpha\omega^{\alpha-1}(l_1\omega-l_2|\nabla\omega|),
\end{align*}
where $l_1, l_2$ are two positive constants depending on $n,p,q,r,\kappa, \delta$.

\textbf{Case 3.}
If \eqref{63} is satisfied, the inequality \eqref{equa:3.15a} implies that
\begin{align*}
\frac{f^{2-\alpha-\frac{p}{2}}}{2\alpha}\mathcal{L} (f^{\alpha})\geq &\ \beta_{n,p,r,\alpha}f^2-(n-1)\kappa f -\frac{a_1}{2}f^{\frac{1}{2}}|\nabla f|,
\end{align*}
where
	$$
	\beta^{(3)}_{n,p,r} =\lim_{\alpha\to\infty}\beta_{n,p,r, \alpha}= \frac{1}{n-1}-\frac{n-1}{4}\left(\frac{r}{p-1}-\frac{n+1}{n-1}\right)^2>0.
	$$
Then, on $\{f\geq y_3+\delta\}$,
	$$
	f\geq y_3+\delta=\frac{ \kappa(n-1)}{\beta^{(3)}_{n,p,r}}+\delta.
	$$
Direct calculation shows that
\begin{align*}
\mathcal L(\omega^\alpha) \geq & \ 2\alpha\omega^{\alpha-1}f^{\frac{p-1}{2}}  \left(f^{\frac{1}{2}}\left(\beta_{n, p, r,\alpha} f  -(n-1)\kappa \right)-\frac{a_1}{2}|\nabla \omega|\right).
\end{align*}
Since
	$$
	\lim_{\alpha\to\infty}\beta_{n, p, r, \alpha} f  -(n-1)\kappa=\beta^{(3)}_{n,p,r}f  -(n-1)\kappa\geq \beta^{(3)}_{n,p,r }\delta,
	$$
we can choose $\alpha$ large enough such that
	$$
	\beta_{n, p, r, \alpha} f  -(n-1)\kappa>\frac{1}{2}\beta^{(3)}_{n,p,r}\delta.
	$$
It follows
\begin{align*}
\mathcal L(\omega^\alpha) \geq & \ 2\alpha\omega^{\alpha-1}f^{\frac{p-1}{2}}  \left(f^{\frac{1}{2}}\frac{\beta^{(3)}_{n,p,r}\delta}{2}-\frac{a_1}{2}|\nabla \omega|\right)\\
=& 2\alpha\omega^{\alpha-1} \left(f^{\frac{p}{2}} \frac{\beta^{(3)}_{n,p,r }\delta}{2}-\frac{a_1}{2}f^{\frac{p-1}{2}}|\nabla \omega|\right).
\end{align*}
Using the facts
$$y_3+\delta\leq f\leq C(n,p,q,r)\sqrt{\kappa}\quad \text{and}\quad f\geq\omega$$
on $\{f\geq y_3+\delta\}$, we have
\begin{align*}
\mathcal{L}(\omega^\alpha)\geq& 2\alpha\omega^{\alpha-1} \left(f^{\frac{p}{2}-1} \frac{\beta^{(3)}_{n,p,r}\delta}{2}\omega-\frac{a_1}{2}f^{\frac{p-1}{2}}|\nabla \omega|\right)\\
\geq &2\alpha\omega^{\alpha-1}(l_1\omega-l_2|\nabla\omega|),
\end{align*}
where $l_1$ and $l_2$ are two positive constants depending on $n,\, p,\,q,\, r,\, \kappa$, and $\delta$.
\end{proof}

\begin{lem}\label{lem43}
Let $(M,g)$ be an $n$-dim($n\geq 2$) complete Riemannian manifold satisfying $\mathrm{Ric}_g\geq-(n-1)\kappa g$ for some constant $\kappa\geq0$. Let $v\in C^1(M)$ be an entire positive solution of \eqref{equa1}. For some $y>0$, we define $\omega = (f-y)^+$. If $\omega$ satisfies the following inequality
$$	 		
\mathcal L(\omega^\alpha)\geq \omega^{\alpha-1}(l_1\omega-l_2|\nabla\omega|),
$$
where $l_1$ and $l_2$ and $\alpha$ are some positive constants, then $\omega\equiv 0$, i.e.,  $f\leq y$.
\end{lem}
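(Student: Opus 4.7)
The plan is to argue by contradiction using an integration-by-parts / Cauchy--Schwarz argument analogous to Section~3, together with a comparison between the forced growth rate of a suitable integral of $\omega$ and the Bishop--Gromov volume bound on $M$.

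First, boundedness: since $v$ is a global positive solution, applying \thmref{thm1} on $B_R(o)$ and letting $R\to\infty$ forces $f=|\nabla u|^2$ to be bounded on $M$, so $0\leq\omega\leq M_0<\infty$, and on $\{\omega>0\}$ one has two-sided positive bounds $C_0\leq f^{p/2-1}\leq C_2$. I would then multiply the hypothesis, rewritten as $\mathcal L(\omega^\alpha)\geq l_1\omega^\alpha-l_2\omega^{\alpha-1}|\nabla\omega|$, by a test function $\omega^s\eta^2$ with $\eta\in C_c^\infty(M)$ a non-negative cutoff and $s>0$ a free parameter, and integrate. One integration by parts, combined with the ellipticity bound $\langle A(\nabla\omega),\nabla\omega\rangle\geq a_2|\nabla\omega|^2$ (with $a_2=\min\{1,p-1\}$) and $|\langle A(\nabla\omega),\nabla\eta\rangle|\leq K|\nabla\omega||\nabla\eta|$ (with $K=1+|p-2|$), produces an integral inequality with a coercive term $s\alpha a_2 C_0\int\omega^{\alpha+s-2}|\nabla\omega|^2\eta^2$ on the left and gradient cross-terms on the right. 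Cauchy--Schwarz with carefully chosen weights absorbs each cross-term into the coercive term; taking $s\geq 2l_2^2/(l_1\alpha a_2 C_0)$ further absorbs the residual $\omega^{\alpha+s}\eta^2$ contribution into $\tfrac{l_1}{2}\int\omega^{\alpha+s}\eta^2$, leaving the clean estimate
\begin{equation*}
\int\omega^{\alpha+s}\eta^2 \leq \frac{C_3}{s}\int\omega^{\alpha+s}|\nabla\eta|^2,
\end{equation*}
where $C_3$ depends on $n,p,\alpha,l_1,l_2,M_0,y$ but crucially not on $s$ or $\eta$.

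Specializing $\eta$ to a cutoff equal to $1$ on $B_R(o)$, vanishing off $B_{R+\rho}(o)$, with $|\nabla\eta|\leq 2/\rho$, this rewrites as the growth recursion
\begin{equation*}
\phi(R+\rho)\geq \left(1+\frac{s\rho^2}{4C_3}\right)\phi(R),\qquad \phi(R):=\int_{B_R(o)}\omega^{\alpha+s}.
\end{equation*}
Iterating on $R_n=R_0+n\rho$ and optimizing $\rho\sim\sqrt{C_3/s}$ forces the exponential lower bound $\phi(R)\geq \phi(R_0)\exp(c_0\sqrt{s}\,(R-R_0))$ for some absolute $c_0>0$. On the other hand, Bishop--Gromov under $\mathrm{Ric}_g\geq-(n-1)\kappa g$ together with $\omega\leq M_0$ give $\phi(R)\leq C_{n,\kappa}M_0^{\alpha+s}\exp((n-1)\sqrt\kappa\,R)$. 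Choosing $s$ so large that $c_0\sqrt{s}>(n-1)\sqrt\kappa$ then forces $\phi(R_0)=0$ for every $R_0$ as $R\to\infty$, i.e., $\omega\equiv 0$.

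The main technical obstacle is verifying that $C_3$ really can be taken \emph{independent} of $s$: the Cauchy--Schwarz weights in the absorption step must be tuned so that the $s$-dependent factors arising from the coercive term cancel exactly, leaving only the $1/s$ prefactor visible in the clean estimate. This $1/s$ scaling is precisely what upgrades the forced growth rate of $\phi$ to the unbounded $\sqrt{s}$ scale, which in turn is what beats the fixed volume-growth rate $(n-1)\sqrt\kappa$ and closes the contradiction.
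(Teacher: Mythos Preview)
Your proposal is correct and follows essentially the same route as the paper: test against $\omega^{s}\eta^{2}$ (the paper writes $\gamma$ for your $s$), integrate by parts, absorb the cross-terms by Cauchy--Schwarz using the coercivity $\langle A(\nabla\omega),\nabla\omega\rangle\geq a_2|\nabla\omega|^2$, and obtain an inequality of the form $\int\omega^{\alpha+s}\eta^2\leq \frac{C}{s}\int\omega^{\alpha+s}|\nabla\eta|^2$ with $C$ independent of $s$; then iterate over concentric balls and compare with Bishop--Gromov volume growth to force $\omega\equiv 0$. The only cosmetic difference is that the paper fixes unit-width annuli $B_{k+1}\setminus B_k$ and uses the resulting growth rate $\ln(C\gamma)\to\infty$, whereas you optimize the annulus width $\rho\sim\sqrt{C_3/s}$ to obtain the sharper rate $c_0\sqrt{s}$; either choice beats the fixed volume rate $(n-1)\sqrt\kappa$ for large $s$, so both arguments close the same way.
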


\begin{proof}
Let $\eta\in C^{\infty}_0(M,\mathbb R)$ be a cut-off function to be determine later. We choose $\eta^2\omega^\gamma$ as test function, then there holds
\begin{align*}
\int_M\mathcal L(\omega^\alpha)\omega^{\gamma}\eta^2\geq \int_M2\alpha\omega^{\alpha+\gamma-1}  \left(l_1\omega-l_2|\nabla \omega|\right)\eta^2.
\end{align*}
We omit the term $f^{\frac{p}{2}-1}$ since $f$ is uniform bounded on $M$. By integration by parts, we obtain
\begin{align*}
&\int_M2\alpha\omega^{\alpha+\gamma-1}  \left(l_1\omega-l_2|\nabla \omega|\right)\eta^2\\
\leq& -\int_M\alpha\gamma\omega^{\alpha+\gamma-2}\eta^2(|\nabla\omega|^2+(p-2)f^{-1}\langle\nabla\omega, \nabla u\rangle^2)\\
&-\int_M2\alpha\eta\omega^{\alpha+\gamma-1}\left(\langle\nabla\omega, \nabla \eta \rangle+(p-2)f^{-1}\langle\nabla\omega,\nabla u\rangle\langle \nabla u,\nabla\eta\rangle\right).
\end{align*}
It follows
\begin{align*}
&\int_M2 \omega^{\alpha+\gamma-1}  \left(l_1\omega-l_2|\nabla \omega|\right)\eta^2+a_2\int_M \gamma\omega^{\alpha+\gamma-2}\eta^2|\nabla\omega|^2\\
\leq &2(p+1)\int_M\omega^{\alpha+\gamma-1}|\nabla\omega||\nabla \eta|  \eta,
\end{align*}
where $a_2=\min\{1, p-1\}>0$ is defined in Section 2.

We note that Cauchy inequality implies
\begin{align*}
2l_2 \omega^{\alpha+\gamma-1}  |\nabla \omega| \eta^2\leq&\ l_2\eta^2\omega^{\alpha+\gamma-2}\left(\frac{|\nabla\omega|^2}{\epsilon}+\epsilon\omega^2\right);\\
2 \eta(p+1) \omega^{\alpha+\gamma-1}|\nabla\omega||\nabla \eta| \leq& \ (p+1)  \omega^{\alpha+\gamma-2}\left(\frac{\eta^2|\nabla\omega|^2}{\epsilon}+\epsilon|\nabla\eta|^2\omega^2\right).
\end{align*}
If we choose $\epsilon$ such that
\begin{align*}
\frac{l_2+p+1}{\epsilon}=a_2\gamma,
\end{align*}
then we have
\begin{align*}
&\int_M2l_1 \omega^{\alpha+\gamma} \eta^2 \leq \int_M  \epsilon(p+1)  \omega^{\alpha+\gamma }|\nabla\eta|^2 +\int_M\epsilon l_2
\omega^{\alpha+\gamma} \eta^2.
\end{align*}
Picking $\gamma$ large enough such that $\epsilon l_2<l_1$, we obtain
\begin{align*}
&l_1 \int_M \omega^{\alpha+\gamma}\eta^2 \leq \int_M \epsilon(p+1)\omega^{\alpha+\gamma }|\nabla\eta|^2.
\end{align*}

Now, if $\omega\neq 0$, without loss of generality we may assume that $\omega|_{B_1}\neq 0$ for some geodesic ball $B_1$ with radius 1, it follows that there always holds true
$$\int_{B_1}\omega^{\alpha+\gamma}>0.$$
By choosing $\eta\in C^{\infty}_0(B_{k+1})$, $\eta\equiv 1$ in $B_k$ and $|\nabla\eta|<4$, then from the above argument we have
\begin{align*}
l_1\int_{B_k}\omega^{\alpha+\gamma}\leq 4\epsilon(p+1) \int_{B_{k+1}}\omega^{\alpha+\gamma}.
\end{align*}
By iteration on $k$, we have
\begin{align*}
\int_{B_k}\omega^{\alpha+\gamma}\geq \left(\frac{C}{\epsilon}\right)^k\int_{B_1}\omega^{\alpha+\gamma}.
\end{align*}
Since $\omega$ is uniformly bounded and
$$\vol(B_{k})\leq e^{(n-1)k\sqrt{\kappa}}$$
by volume comparison theorem, we have
\begin{align*}
C_1^{\alpha+\gamma}e^{(n-1)k\sqrt{\kappa}}\geq e^{k\ln\frac{C}{\epsilon}}.
\end{align*}
We choose $\gamma$ such that $$\ln\frac{C}{\epsilon}>2(n-1)\sqrt{\kappa}+2$$ and then choose $k$ large enough, immediately we get a contradiction.
This contradiction means $\omega\equiv 0$ and we finish the proof of this lemma.
\end{proof}

\begin{proof}[Proof of \thmref{t10}]
\thmref{t10} is the direct consequence of \lemref{lem42} and \lemref{lem43}. If $(b, c, q, r)\in W_i, i=1,2,3$, then by \lemref{lem42}, for any $\delta>0$ and $\omega=(f-y_i-\delta)^+$, we have
	$$
	\mathcal{L}(\omega^\alpha)\geq 	2\alpha\omega^{\alpha-1}  \left(l_1\omega-l_2|\nabla \omega|\right).
	$$
Due to \lemref{lem43}, we have $\omega\equiv 0$, i.e., $f\leq y_i+\delta$. Since $\delta$ can be arbitrary small,  it is clear that $f\leq y_i$.
Keeping $$f = |\nabla u|^2 = (p-1)^2|(\nabla v)/v|^2$$
in mind, we obtain
	$$
	\frac{|\nabla v|}{v}\leq \frac{\sqrt{y_i}}{p-1}.
	$$
So we finish the proof of \thmref{t10}.
\end{proof}

Note $y_1< y_2$, $y_1<y_3$ and $W_i\cap W_j\neq\emptyset$, $\forall 1\leq i,\,j\leq 3$ if $W_i\neq\emptyset$ and $W_j\neq \emptyset$. So we can obtain finer bounds for the intersection parts by choosing the smaller relatively upper bounds.

On the other hand, $|b|$ and $|c|$ do not appear in the expression of gradient estimates and the definition of $W_i, i=1,2,3$ only depends on the sign of $b$ and $c$. We can re-divide the regions $W_i, i=1,2,3$ according to the signs of $b$ and $c$. Hence, we can get the following more obvious corollaries.

\begin{cor}\label{cor4.4}
Let $(M,g)$ be an $n$-dim($n\geq 2$) complete non-compact Riemannian manifold satisfying $\mathrm{Ric}_g\geq-(n-1)\kappa g$ for some constant $\kappa\geq0$. Let $v\in C^1(M)$ be an entire positive solution of \eqref{equa1} with $b>0, c>0$.
	\begin{enumerate}
		\item If $(q,r)\in V_1$, where
		\begin{align}\label{cond1b}
			 V_1=\left\{(q,r):   \frac{n+1}{n-1}-\frac{r}{p-1}\geq 0\right\},
		\end{align}
		then
		\begin{align*}
			\frac{|\nabla v|}{v}(x)\leq \frac{(n-1)\sqrt{\kappa}}{p-1}, \quad\forall x\in M.
		\end{align*}
		\item If $(q,r)\in V_2$, where
		\begin{align}\label{cond2b}
		V_2=\left\{(q,r):  \frac{n+1}{n-1}<\frac{r}{p-1}< \frac{n+3}{n-1}\right\},
		\end{align}
		then
		\begin{align*}
			\frac{|\nabla v|}{v}(x)\leq \frac{2\sqrt{\kappa}}{(p-1)\sqrt{\left(\frac{r}{p-1}-1\right)\left(\frac{n+3}{n-1}-\frac{r}{p-1}\right)}}, \quad\forall x\in M.
\end{align*}
\end{enumerate}
\end{cor}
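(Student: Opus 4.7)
The plan is to derive Corollary \ref{cor4.4} as an immediate specialization of Theorem \ref{t10} under the additional sign hypothesis $b>0$ and $c>0$. The point is that the regions $W_1, W_2, W_3$ in Theorem \ref{t10} are formulated in a way that is symmetric in the roles of $(b,q)$ and $(c,r)$, but once the signs of $b$ and $c$ are fixed and the standing condition $q\leq r$ is imposed, they reduce to conditions purely on the ratio $r/(p-1)$. The resulting bounds do not depend on $|b|$ or $|c|$, so it suffices to identify the correct $W_i$ for each case and quote the corresponding part of Theorem \ref{t10}.

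First I would treat case (1). The definition of $V_1$ in \eqref{cond1b} gives $\tfrac{n+1}{n-1}-\tfrac{r}{p-1}\geq 0$, and combined with the standing assumption $q\leq r$ this also forces $\tfrac{n+1}{n-1}-\tfrac{q}{p-1}\geq 0$. Since $b>0$ and $c>0$, both inequalities in \eqref{cond1a} defining $W_1$ hold, so $(b,c,q,r)\in W_1$. Part (1) of Theorem \ref{t10} then delivers the stated bound $(n-1)\sqrt{\kappa}/(p-1)$.

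For case (2), the definition of $V_2$ in \eqref{cond2b} is precisely the strict inequality $|\tfrac{r}{p-1}-\tfrac{n+1}{n-1}|<\tfrac{2}{n-1}$ appearing in \eqref{cond3a}, and combined with $b\geq 0$ (a weakening of $b>0$) we land in $W_3$. Part (3) of Theorem \ref{t10} then yields the stated bound. There is no substantive obstacle; the only point worth checking is the consistency of the two bounds on the overlap. When $\tfrac{r}{p-1}=\tfrac{n+1}{n-1}$, the expression under the square root in case (2) equals $(\tfrac{2}{n-1})^2$, so the case (2) bound simplifies to $(n-1)\sqrt{\kappa}/(p-1)$, matching case (1) continuously. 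As noted in the paragraph preceding the corollary, on any $W_i\cap W_j$ overlap one is free to take the smaller of the two bounds supplied by Theorem \ref{t10}, and at this boundary the two agree.
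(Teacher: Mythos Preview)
Your argument is correct and matches the paper's own approach, which simply remarks that Corollaries \ref{cor4.4}--\ref{cor4.6} follow by re-dividing $W_1,W_2,W_3$ according to the signs of $b,c$ and taking the smaller bound on overlaps; you have merely made this explicit. One minor wording slip: $V_2$ is not \emph{precisely} the condition $\left|\tfrac{r}{p-1}-\tfrac{n+1}{n-1}\right|<\tfrac{2}{n-1}$ (which is $1<\tfrac{r}{p-1}<\tfrac{n+3}{n-1}$) but a proper subset of it---this only strengthens the inclusion $V_2\subset W_3$, so nothing is lost.
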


\begin{cor}\label{cor4.5}
	Let $(M,g)$ be an $n$-dim($n\geq 2$) complete non-compact Riemannian manifold satisfying $\mathrm{Ric}_g\geq-(n-1)\kappa g$ for some constant $\kappa\geq0$. Let $v\in C^1(M)$ be an entire positive solution of \eqref{equa1} with $b<0$ and $c<0$.
	\begin{enumerate}
		\item If  $(q,r)\in V_1$, where
		\begin{align}\label{cond1c}
			 V_1=\left\{(q,r):  \frac{n+1}{n-1}-\frac{q}{p-1}\leq 0\right\},
		\end{align}
		then
		\begin{align*}
			\frac{|\nabla v|}{v}(x)\leq \frac{(n-1)\sqrt{\kappa}}{p-1}, \quad\forall x\in M.
		\end{align*}
		\item If $(q,r)\in V_2$, where
		\begin{align}\label{cond2c}
			 V_2=\left\{(q,r):  1<\frac{q}{p-1}<  \frac{n+1}{n-1}\right\},
		\end{align}
		then
		\begin{align*}
			\frac{|\nabla v|}{v}(x)\leq \frac{2\sqrt{\kappa}}{(p-1)\sqrt{\left(\frac{q}{p-1}-1\right)\left(\frac{n+3}{n-1}-\frac{q}{p-1}\right)}}, \quad\forall x\in M.
		\end{align*}
	\end{enumerate}
\end{cor}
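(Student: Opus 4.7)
The plan is to recognize that Corollary 4.5 is not a new result but rather a repackaging of Theorem \ref{t10} under the specialized sign assumptions $b<0,\, c<0$. The task reduces to checking, case by case, that each region $V_i$ (with $b<0,c<0$) embeds into one of the regions $W_j$ from Theorem \ref{t10}, and then invoking the conclusion of that theorem directly. Once the right $W_j$ is identified, the explicit bounds are exactly the ones listed.

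First I would handle the case $(q,r)\in V_1$, i.e.\ $\frac{n+1}{n-1}-\frac{q}{p-1}\leq 0$. Since the paper's standing assumption $q\leq r$ gives $\frac{r}{p-1}\geq\frac{q}{p-1}\geq\frac{n+1}{n-1}$, we also have $\frac{n+1}{n-1}-\frac{r}{p-1}\leq 0$. Because $b<0$ and $c<0$, both quantities
\[
b\!\left(\frac{n+1}{n-1}-\frac{q}{p-1}\right),\qquad c\!\left(\frac{n+1}{n-1}-\frac{r}{p-1}\right)
\]
are non-negative, so $(b,c,q,r)\in W_1$. Part (1) of Theorem \ref{t10} then yields $|\nabla v|/v\leq (n-1)\sqrt{\kappa}/(p-1)$ on all of $M$.

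Next I would treat the case $(q,r)\in V_2$, i.e.\ $1<\frac{q}{p-1}<\frac{n+1}{n-1}$. This is equivalent to
\[
\left|\frac{q}{p-1}-\frac{n+1}{n-1}\right|<\frac{n+1}{n-1}-1=\frac{2}{n-1},
\]
so the second condition of $W_2$ is satisfied; the first ($c\leq 0$) is immediate from $c<0$. Hence $(b,c,q,r)\in W_2$, and Part (2) of Theorem \ref{t10} gives precisely the stated bound involving $\sqrt{(\frac{q}{p-1}-1)(\frac{n+3}{n-1}-\frac{q}{p-1})}$.

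There is no real obstacle here; the only subtlety is making sure the hypothesis $q\leq r$ is invoked in the $V_1$ case so that the $c$-condition of $W_1$ is automatic, and observing that $W_3$ plays no role when $b<0$ (since $W_3$ requires $b\geq 0$). The corollary is therefore proved by combining these two reductions with Theorem \ref{t10}, and the assertion is nothing more than an a posteriori re-parametrisation of its conclusion once the sign of $b$ and $c$ is fixed.
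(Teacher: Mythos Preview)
Your proof is correct and follows exactly the approach the paper intends: the corollary is stated as a direct repackaging of Theorem~\ref{t10} once the signs $b<0,\,c<0$ are fixed, and you verify the inclusions $V_1\subset W_1$ (using $q\le r$) and $V_2\subset W_2$ just as the paper's discussion preceding Corollary~\ref{cor4.4} indicates. One minor wording quibble: in the $V_2$ case the condition $1<\tfrac{q}{p-1}<\tfrac{n+1}{n-1}$ is not \emph{equivalent} to $\bigl|\tfrac{q}{p-1}-\tfrac{n+1}{n-1}\bigr|<\tfrac{2}{n-1}$ (the latter allows $\tfrac{q}{p-1}$ up to $\tfrac{n+3}{n-1}$), but since you only need the forward implication the argument is unaffected.
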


\begin{figure}[h]
	\begin{tikzpicture}
		\pgfsetfillopacity{0.5}
		\path[fill=red](-0.2, -0.2)--(2,2)--(-0.2, 2);
		\path[fill=yellow](2, 2)--(8/3,8/3)--(-0.2,8/3)--(-0.2, 2);
		\draw[dotted] (-0.2, -0.2) -- (3, 3);
		\draw[dotted] (-0.2, 2 )--(2, 2) node[anchor=east]{$\frac{r}{p-1}= \frac{n+1}{n-1}$}-- (3, 2 );
		\draw[dotted] (-0.2, 8/3 )--(2.5, 8/3) node[anchor=east]{$\frac{r}{p-1}= \frac{n+3}{n-1}$}-- (3, 8/3 );
		\draw (0.8,0.8) node[] {\rotatebox{45}{$q=r $}};
		\draw[->,ultra thick] (-0.2,0)--(3,0) node[right]{$\frac{q}{p-1}$};
		\draw[->,ultra thick] (0,-0.2)--(0,4) node[above]{$\frac{r}{p-1}$};
	\end{tikzpicture}			
	\begin{tikzpicture}
		\pgfsetfillopacity{0.5}
		\draw[help lines, color=red!5, dashed] (-0.5,-0.5) grid (4,4);
		\path[fill=red](2, 2)--(3.6,3.6)--(2,3.6);
		\path[fill=green](4/3,4/3)--(2,2)--(2,3.6)--(4/3, 3.6);
		\draw[dotted] (-0.2, -0.2) -- (3.5,3.5);
		\draw[dotted] (4/3,-0.2) --(4/3, 2.8)  node[anchor=north]{$\frac{q}{p-1}=1 $} -- (4/3,8/3) -- (4/3,3.6);
		\draw[dotted] (2,-0.5) --(2, 1.3)  node[anchor=north]{$\frac{q}{p-1}= \frac{n+1}{n-1}$} -- (2,3.6);
		\draw (0.8,0.8) node[] {\rotatebox{45}{$q=r $}};
		\draw[->,ultra thick] (-0.2,0)--(3,0) node[right]{$\frac{q}{p-1}$};
		\draw[->,ultra thick] (0,-0.2)--(0,4) node[above]{$\frac{r}{p-1}$};
	\end{tikzpicture}	
		\begin{tikzpicture}
		\pgfsetfillopacity{0.5}
		\draw[help lines, color=red!5, dashed] (-0.5,-0.5) grid (4,4);
		\path[fill=red](-0.2, 2)--(2,2)--(2,3.6)--(-0.2, 3.6);
		\path[fill=green](2,3.6)--(2,2)--(8/3,8/3)--(8/3, 3.6);
		\path[fill=yellow](4/3, 4/3)--(2,2)--(-0.2,2)--(-0.2, 4/3);
		\draw[dotted] (-0.2, -0.2) -- (3.2,3.2);
		\draw[dotted] (-0.2, 2 )--(2, 2) node[anchor=east]{$\frac{r}{p-1}= \frac{n+1}{n-1}$}-- (3, 2 );
		\draw[dotted] (8/3,-0.5) --(8/3, 1.3)  node[anchor=north]{$\frac{q}{p-1}= \frac{n+3}{n-1}$} -- (8/3,3.5);
		\draw[dotted] (-0.2, 4/3 )--(1.5, 4/3) node[anchor=east]{$\frac{r}{p-1}= 1$}-- (3, 4/3);
		\draw (0.6,0.6) node[] {\rotatebox{45}{$q=r $}};
		\draw[->,ultra thick] (-0.2,0)--(3,0) node[right]{$\frac{q}{p-1}$};
		\draw[->,ultra thick] (0,-0.2)--(0,4) node[above]{$\frac{r}{p-1}$};
	\end{tikzpicture}					
\begin{tikzpicture}
		\path[fill=red!0](0, 1)--(0.5,1)--(0.5,1.1) --(0.5,1.2)--(0,1.2);		
		\path[fill=red!50](0, 2)--(0.5,2)--(0.5,2.1) node[anchor=west]{$f<y_1$}--(0.5,2.3)--(0,2.3);				
		\path[fill=green!50](0, 3)--(0.5,3)--(0.5,3.1) node[anchor=west]{$f<y_2$}--(0.5,3.3)--(0,3.3);
		\path[fill=yellow!50](0,4)--(0.5,4)--(0.5,4.1) node[anchor=west]{$f<y_3$}--(0.5,4.3)--(0,4.3);
\end{tikzpicture}
\caption{The region of $(\frac{q}{p-1}, \frac{r}{p-1})$ with different gradient bounds. The first figure shows the case $b>0,c>0$; the second figure shows the case $b<0, c<0$; the third figure shows the case $b>0, c<0$.}
\end{figure}
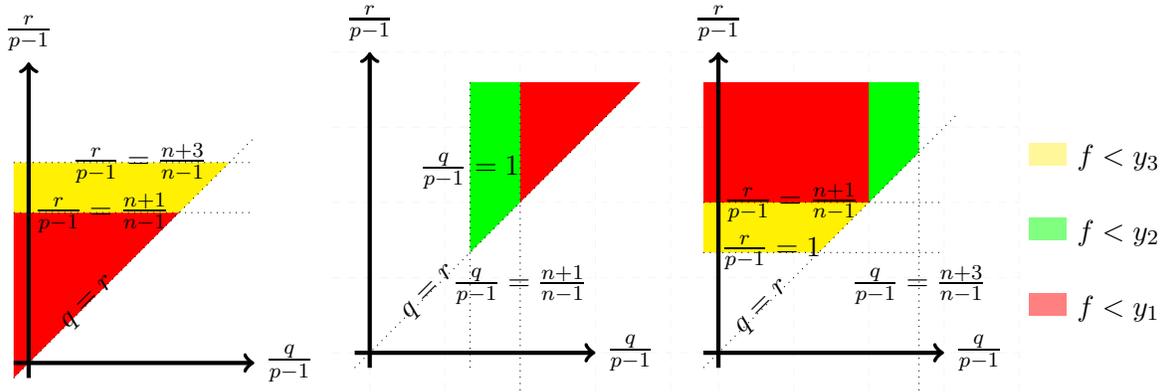

\begin{cor}\label{cor4.6}
Let $(M,g)$ be an $n$-dim($n\geq 2$) complete non-compact Riemannian manifold satisfying $\mathrm{Ric}_g\geq-(n-1)\kappa g$ for some constant $\kappa\geq0$. Assume that $v\in C^1(M)$ is an entire positive solution of \eqref{equa1} with $b>0$ and $c<0$.
\begin{enumerate}
\item If $(q,r)\in V_1$, where
\begin{align}\label{cond1d}
V_1=\left\{(q,r): \frac{n+1}{n-1}-\frac{q}{p-1}\geq 0\quad\text{and}\quad \frac{n+1}{n-1}-\frac{r}{p-1}\leq 0\right\},
\end{align}
then
\begin{align*}
\frac{|\nabla v|}{v}(x)\leq \frac{(n-1)\sqrt{\kappa}}{p-1}, \quad\forall x\in M.
\end{align*}
\item If $(q,r)\in V_2$, where
\begin{align}\label{cond2d}
V_2=\left\{(q,r): 1<\frac{r}{p-1}< \frac{n+1}{n-1}\right\},
\end{align}
then
\begin{align*}
\frac{|\nabla v|}{v}(x)\leq \frac{2\sqrt{\kappa}}{(p-1)\sqrt{\left(\frac{r}{p-1}-1\right)\left(\frac{n+3}{n-1}-\frac{r}{p-1}\right)}}, \quad\forall x\in M.
\end{align*}
		
\item If $(q,r)\in V_3$, where
\begin{align}\label{cond3d}
V_3=\left\{(q,r):    \frac{n+1}{n-1}<\frac{q}{p-1}< \frac{n+3}{n-1}
\right\},
\end{align}
then
\begin{align*}
\frac{|\nabla v|}{v}(x)\leq \frac{2\sqrt{\kappa}}{(p-1)\sqrt{\left(\frac{q}{p-1}-1\right)\left(\frac{n+3}{n-1}-\frac{q}{p-1}\right)}}, \quad\forall x\in M.
\end{align*}
\end{enumerate}
\end{cor}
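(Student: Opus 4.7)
The statement is a direct specialization of \thmref{t10} to the sign pattern $b>0$, $c<0$, so my plan is not to prove anything new but to match each of the three regions $V_1$, $V_2$, $V_3$ in the corollary to one of the regions $W_1$, $W_2$, $W_3$ appearing in \thmref{t10}, and then read off the gradient bound. The key observation is that the definitions of $W_1$, $W_2$, $W_3$ involve the quantities $b\!\left(\frac{n+1}{n-1}-\frac{q}{p-1}\right)$ and $c\!\left(\frac{n+1}{n-1}-\frac{r}{p-1}\right)$, so the sign pattern of $(b,c)$ flips inequalities on $(q,r)$ cleanly. Once $b>0$ and $c<0$ are fixed, each $V_i$ can be shown to imply membership in a specific $W_j$.

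Concretely, I would verify: (i) For $(q,r)\in V_1$, the inequalities $\frac{n+1}{n-1}-\frac{q}{p-1}\ge 0$ and $\frac{n+1}{n-1}-\frac{r}{p-1}\le 0$, combined with $b>0$ and $c<0$, give exactly the two conditions in \eqref{cond1a}; hence $(b,c,q,r)\in W_1$ and part (1) of \thmref{t10} produces the bound $\tfrac{(n-1)\sqrt{\kappa}}{p-1}$. (ii) For $(q,r)\in V_2$, the constraint $1<\frac{r}{p-1}<\frac{n+1}{n-1}$ is equivalent to $\bigl|\frac{r}{p-1}-\frac{n+1}{n-1}\bigr|<\frac{2}{n-1}$ restricted to the lower half-interval, and with $b>0$ this lands $(b,c,q,r)$ in $W_3$; part (3) of \thmref{t10} then yields the claimed bound. (iii) For $(q,r)\in V_3$, the constraint $\frac{n+1}{n-1}<\frac{q}{p-1}<\frac{n+3}{n-1}$ is the upper half of $\bigl|\frac{q}{p-1}-\frac{n+1}{n-1}\bigr|<\frac{2}{n-1}$, and $c<0$ (so in particular $c\le 0$) puts us in $W_2$; part (2) of \thmref{t10} gives the final bound.

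The only subtlety is ensuring that $V_1$, $V_2$, $V_3$ really cover the entire range to which some case of \thmref{t10} applies with $b>0$, $c<0$, and that we have selected the sharpest bound on overlaps. Using $q\le r$, the three regions are essentially disjoint: $V_2$ forces $\frac{r}{p-1}<\frac{n+1}{n-1}$, while both $V_1$ and $V_3$ force $\frac{r}{p-1}\ge\frac{n+1}{n-1}$, and $V_1$ versus $V_3$ is separated by the threshold $\frac{q}{p-1}=\frac{n+1}{n-1}$. On the common boundary $\frac{q}{p-1}=\frac{n+1}{n-1}$ or $\frac{r}{p-1}=\frac{n+1}{n-1}$ the $W_2$/$W_3$ bound collapses to $\frac{(n-1)\sqrt{\kappa}}{p-1}$, matching the $W_1$ bound, so the division is consistent; and wherever both $W_1$ and $W_2$ or $W_3$ are simultaneously applicable, the observation $y_1<y_2,\ y_1<y_3$ made just before Corollary~4.4 shows that the $W_1$ bound is the finer one, justifying the assignment of those points to $V_1$.

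There is essentially no analytic obstacle: the real work is done inside \thmref{t10}. The only thing that requires care is bookkeeping — translating signed inequalities between $b$, $c$ and the algebraic conditions on $\frac{q}{p-1},\frac{r}{p-1}$, and keeping track of which of the three bounds in \thmref{t10} is sharpest on each piece of $\{(q,r):q\le r\}$. Once this bookkeeping is laid out, the proof of the corollary is a one-line invocation of \thmref{t10} in each of the three cases.
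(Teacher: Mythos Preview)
Your proposal is correct and follows exactly the paper's own reasoning: the paper derives Corollaries~4.4--4.6 as immediate re-divisions of the regions $W_1,W_2,W_3$ in \thmref{t10} according to the signs of $b$ and $c$, invoking the observation $y_1<y_2,\ y_1<y_3$ to select the sharper bound on overlaps. No separate argument is given in the paper beyond this bookkeeping, which is precisely what you have outlined.
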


Finally, we prove \corref{thm1.6} and \corref{thm1.7}. These two corollaries are obvious in light of Theorem \ref{t10}.

\subsection{Proof of \corref{thm1.6} and \corref{thm1.7}}\

In this case, $b=1$, $c=-1$, $q=1$ and $r=3$. Conditions \eqref{equa1.10}, \eqref{equa1.11}, \eqref{equa1.12} in \corref{thm1.6} respectively correspond to conditions \eqref{cond1d}, \eqref{cond2d} and \eqref{cond3d} in \corref{cor4.6}.
\corref{thm1.6} follows from \corref{cor4.6}.

The proof of \corref{thm1.7} is almost the same and we omit it.
\qed

From the above arguments on \corref{thm1.6} and \corref{thm1.7} one see that for $n\geq 2$ the following assumption
$$\frac{2n+2}{n+3}< p$$
can not be dropped. A natural problem arises: whether the above conclusions for Allen-Cahn equation and Fisher-KPP equation still hold true for
$$1 <p \leq \frac{2n+2}{n+3}$$
or not?
\medskip
	
\noindent {\it\bf{Acknowledgements}}: The author Y. Ma is supported by YSBR-001. The author Y. Wang is supported partially by National Key Research and Development projects of China (Grant No. 2020YFA0712500).
\medskip\medskip

\bibliographystyle{plain}

\end{document}